 \newtheorem{theorem}{Theorem}[section]
 \newtheorem{lemma}[theorem]{Lemma}
 \newtheorem{proposition}[theorem]{Proposition}
 \newtheorem*{theorem*}{Theorem}
\newtheorem*{proposition*}{Proposition}
\newtheorem*{lemma*}{Lemma}
\theoremstyle{definition}
 \newtheorem{definition}[theorem]{Definition}
 \theoremstyle{remark}
 \newtheorem{remark}[theorem]{Remark}
   \newtheorem*{claim*}{Claim}
\newcommand{\op}[1]{\operatorname{#1}}
\newcommand{\cf}{\emph{cf}.}
\newcommand{\acou}[2]{\ensuremath{\left\langle #1 , #2 \right\rangle}}
\newcommand{\brak}[1]{\ensuremath{\langle #1\rangle}}
\newcommand{\Tr}{\ensuremath{\op{Tr}}}
\def\XXint#1#2#3{{\setbox0=\hbox{$#1{#2#3}{\int}$}
\vcenter{\hbox{$#2#3$}}\kern-.5\wd0}}
\newcommand{\C}{\ensuremath{\mathbb{C}}} 
\newcommand{\N}{\ensuremath{\mathbb{N}}} 
\newcommand{\R}{\ensuremath{\mathbb{R}}} 
\newcommand{\Z}{\ensuremath{\mathbb{Z}}}
\newcommand{\Rn}{\ensuremath{\R^{n}}}
\newcommand{\cA}{\mathscr{A}}
\newcommand{\cH}{\ensuremath{\mathscr{H}}}
\newcommand{\cL}{\ensuremath{\mathscr{L}}}
\newcommand{\cS}{\ensuremath{\mathscr{S}}}
\newcommand{\tS}{\textup{S}}
\newcommand{\tCS}{\textup{CS}}
\newcommand{\tL}{\textup{L}}
\newcommand{\tCL}{\textup{CL}}
\def\dba{{\mathchar'26\mkern-12mu d}}
\newcommand{\dbar}{{\, \dba}}
\newcommand{\Sp}{\op{Sp}}
\newcommand{\bU}{\pmb{U}}
\numberwithin{equation}{section}
\begin{document}

\title{Weakly Parametric Pseudodifferential Calculus for Twisted $C^*$-dynamical Systems}

 \author{Gihyun Lee}
 \address{Department of Mathematics: Analysis, Logic and Discrete Mathematics, Ghent University, Krijgslaan 281, Building S8, B 9000 Ghent, Belgium}
 \email{Gihyun.Lee@UGent.be}
 
 \author{Matthias Lesch}
 \address{Mathematisches Institut, Universit\"at Bonn, Endenicher Allee 60, 53115 Bonn, Germany}
 \email{lesch@math.uni-bonn.de}

 \thanks{GL was supported by the Max Planck Institute for Mathematics, Bonn, the FWO Odysseus 1 grant G.0H94.18N: Analysis and Partial Differential Equations, and the Methusalem programme of the Ghent University Special Research Fund (BOF) (Grant number 01M01021).
ML was supported by the Hausdorff Center for Mathematics, Bonn}
 
\keywords{Pseudodifferential operator, twisted $C^*$-dynamical system,
weakly parametric pseudodifferential calculus}
 
\subjclass[2020]{Primary 46L87, 47G30; Secondary 58B34, 35S99}
 
\begin{abstract}
For a twisted $C^*$-dynamical system $(\cA,\Rn,\alpha,e)$ over a unital $C^*$-algebra we establish a weakly parametric pseudodifferential calculus analogously to the celebrated weakly parametric
calculus due to Grubb and Seeley~\cite{GS:IM95}. If the $C^*$-algebra $\cA$ has an $\alpha$-invariant trace then we prove an expansion of the resolvent trace (with respect to the dual trace on multipliers) for suitable pseudodifferential multipliers. The question whether the expansion holds true as a Hilbert space trace expansion in concrete GNS spaces for $\cA$ will be addressed in a future publication.
\end{abstract}

\maketitle 

\section{Introduction} \label{sec:intro}

The purpose of this paper is to establish a weakly parametric pseudodifferential calculus for twisted $C^*$-dynamical systems. Let us first put this into perspective. Pseudodifferential operators were developed to be able to treat the resolvent of a differential operator as a ``virtual differential operator'' of negative order and hence to a large extent on an equal footing as differential operators. Soon it became clear from Seeley's famous paper on complex powers~\cite{Se:PSPM67} that in order to obtain the full strength of the results on the trace of the $\zeta$-function (or more or less equivalently the short time asymptotic expansion of the heat trace) it was necessary to extend the pseudodifferential calculus to a \emph{calculus with parameters}, \cf\ e.g.,
\cite{Sh:Springer01}. It is important to understand that this means that the resolvent parameter is essentially treated like a covariable and it should not be confused with the naive perception of being just an auxiliary parameter in the theory. However, as it turned out the parametric calculus works well only for resolvents of \emph{differential operators}. This
subtlety was even overlooked in the early days and observed only much later in the paper by Duistermaat and Guillemin~\cite{DG:IM75}. In order to 
incorporate the resolvent of a pseudodifferential operator into the theory it was therefore necessary to establish a \emph{weakly parametric calculus} which was fully worked out much later by Grubb and Seeley~\cite{GS:IM95}. The weakly parametric calculus is indispensable for establishing the more subtle invariants of an elliptic operator as e.g., the noncommutative residue~\cite{Gu:AM85, Wo:PhDThesis, Wo:Springer87}. The latter is intimately related to the $\log t$ term in the asymptotic expansion of $\Tr(A e^{-t\Delta})$, say, as $t\to 0$, it vanishes for a differential operator $A$.

Pseudodifferential calculi are also of significance in the noncommutative setting~\cite{Ba:CRAS88,Co:CRAS80}. On noncommutative tori, which form an interesting model case for a noncommutative space, meanwhile a rich spectral geometry has been established~\cite{CF:Muenster19, CM:JAMS14, CT:Baltimore11, DL:JMP13, FG:SIGMA16, KM:JGP14, LM:GAFA16}. In \cite{LM:GAFA16} a full heat trace asymptotics for Laplace type operators on Heisenberg modules, a class of natural projective modules over noncommutative tori, was established and applied to obtain many of the spectral geometry results known from the conformal geometry of compact oriented surfaces. The main technical tool there was a parametric pseudodifferential calculus (\`a la Shubin~\cite{Sh:Springer01}) for twisted $C^*$-dynamical systems. In a second step this was then used to obtain a heat trace asymptotic for the effective action on Heisenberg modules. The second step is nontrivial as the natural traces on the $C^*$-dynamical system and the Hilbert space trace on the Heisenberg module are only ``asymptotically equal'' (see~\cite[Thm.\ 6.2]{LM:GAFA16}). It should be noted that the Heisenberg modules are the natural noncommutative torus analogues of the classification of holomorphic vector bundles over elliptic curves (see, e.g., \cite{Po:DocMath04}).

If one wants to push the analogy between the spectral geometry of noncommutative tori and compact surfaces further it is therefore natural to extend Grubb and Seeley's weakly parametric calculus to this situation. The current paper is an important step in this direction. Namely, we work out the weakly parametric calculus for abstract twisted $C^*$-dynamical systems. The calculus here is reminiscent of the calculus on $\Rn$ (\cf\ \cite[Chap.\ 4]{Sh:Springer01}). What is missing, however, and which needs to be addressed in a future publication is the above mentioned transfer to the effective action on Heisenberg modules. The obstacle is that so far we are unable to prove the analog of  \cite[Thm.\ 6.2]{LM:GAFA16} about the asymptotic comparison between the abstract trace and the concrete Hilbert space trace for a weakly parametric operator family.

Let us now in some more detail sketch the set-up and main results of the paper. Let $(\cA,\Rn,\alpha,e)$ be a twisted $C^*$--dynamical system, i.e., $\cA$ is a unital $C^*$-algebra and $\alpha$ is a continuous $\Rn$ action by automorphisms. The twisting $e(x,y)=e^{i\acou {Bx} y }$ is given by a skew-symmetric real $n\times n$ matrix. $\cA^\infty$ denotes, as usual, the subalgebra of smooth elements with respect to the action. We now
study pseudodifferential multipliers on the Hilbert $\cA$-module $\cH(\Rn,\cA) := L^2(\Rn)\otimes \cA$. The latter is the exterior tensor product (\cf\ \cite[Chap.\ 4]{La:CUP95}) of the Hilbert space $L^2(\Rn)$ with $\cA$, and here $\cA$ is regarded as a Hilbert module over itself. More concretely $\cH(\Rn,\cA)$ is the completion of the Schwartz space $\cS(\Rn,\cA^\infty)$ with respect to the inner product,
\begin{equation*}
\acou f g = \int_{\Rn} f(x)^*g(x) dx , \qquad f,g\in\cS(\Rn,\cA^\infty) .
\end{equation*}
The twisted dynamical system induces on $\cS(\Rn,\cA^\infty)$ an adjoint~(\ref{eq:PsiDOs.Hilbert-premodule-adjoint}) and a convolution product~(\ref{eq:PsiDOs.Hilbert-premodule-convolution}) which turn it into a $*$-algebra. Furthermore, the convolution product gives rise to a left regular representation of $\cS(\Rn,\cA^\infty)$ by bounded adjointable multipliers on the Hilbert $\cA$-module $\cH(\Rn,\cA)=L^2(\R^n)\otimes\cA$. Extending this left regular representation to more general function classes now leads naturally to ``pseudodifferential multipliers''. For the usual H\"ormander symbol class and $\cA=\C$ this specializes to a well-known pseudodifferential calculus on $\Rn$ (\cf\ \cite[Chap.\
4]{Sh:Springer01}).

In Section~\ref{sec:PsiDOs} we extend the H\"ormander symbol classes to our situation. This is a straightforward generalization of the Baaj-Julg symbol spaces~\cite{Ba:CRAS88} from the untwisted to the twisted case. Furthermore, we work out in some more detail the pseudodifferential calculus for a twisted
$C^*$-dynamical system as indicated in~\cite[\S{3.2}]{LM:GAFA16}. We follow also~\cite{HLP:Part1, HLP:Part2}. For a symbol $f\in\tS^m(\Rn,\cA^\infty)$, $m\in\R$, the pseudodifferential multiplier $P_f$ associated to $f$ is defined by
\begin{equation} \label{eq:PsiDOs.PsiDOs.definition.intro}
    (P_f u)(x) := (M_{f^\vee} u)(x)
      = \frac{1}{(2\pi)^n}\int_{\Rn} e^{i\acou x \xi} \alpha_{-x}(f(\xi+Bx))\hat{u}(\xi)
      d\xi,
     \quad u\in\cS(\Rn,\cA^\infty) .
\end{equation}
The rules of calculus are worked out in Section~\ref{sec:PsiDOs} below. We also put particular emphasis on the asymptotic symbol formulas for \emph{classical} operators which are a bit more involved here due to the twisting (Theorem~\ref{thm:PsiDOs.composition-formula-classical-symbols}).

In Section~\ref{sec:boundedness} we prove
a basic boundedness result for pseudodifferential multipliers on $\cH(\Rn,\cA)$ (Proposition~\ref{prop:boundedness.PsiDOs-boundedness}). As a tool we prove a Hilbert $C^*$-module version of the classical Schur's test on boundedness for integral operators (Lemma~\ref{lem:boundedness.Schur-test}).

In Section~\ref{sec:weakly} we develop the weakly parametric pseudodifferential calculus for pseudodifferential multipliers acting on the Hilbert module $\cH(\Rn,\cA) = L^2(\Rn)\otimes\cA$. This is an adaption of Grubb and Seeley's weakly parametric calculus for ordinary pseudodifferential operators~\cite{GS:IM95}.

The final Section~\ref{sec:Resolvents} contains our main results on the trace expansions, i.e., we have
\begin{theorem}[Theorem~\ref{eq:asymptotic.trace-expansion}]
\label{main.expansion.result.intro}
Let $P$ and $A$ be classical (i.e., $1$-step polyhomogeneous)
pseudodifferential multipliers with respective orders $m\in\N$ and
$\omega\in\R$. Suppose that $P$ is elliptic with parameter
$\mu\in\Gamma$. Then, for $\lambda\in-\Gamma^m$ and $k$ with
$-km+\omega<-n$, we have the asymptotic expansion,
\begin{equation*}
\Tr_\psi\left[ A(P-\lambda)^{-k} \right] \sim \sum_{j=0}^\infty c_j\lambda^{\frac{n+\omega-j}{m}-k} + \sum_{l=0}^\infty \big( c_l'\log{\lambda} + c_l'' \big) \lambda^{-k-l} .
\end{equation*}
Here the coefficients $c_j$, $c_l'$ and $c_l''$ are given by the the integral (over $\Rn$) of the trace $\psi$ of the respective symbols $f(\xi)\sim\sum_{j\geq 0}f_{m-j}(\xi)$ and $a(\xi)\sim\sum_{j\geq 0}a_{\omega-j}(\xi)$ of $P$ and $A$.
\end{theorem}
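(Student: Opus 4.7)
The plan is to reduce the trace expansion to an integral of symbols via the weakly parametric calculus of Section~\ref{sec:weakly}, and then to perform the asymptotic analysis on each homogeneous piece of the parametric symbol. First, because $P$ is elliptic with parameter $\mu \in \Gamma$, the resolvent $(P - \lambda)^{-1}$ (writing $\lambda = -\mu^m$) is a weakly parametric pseudodifferential multiplier of order $-m$; iterating and composing with $A$ via the symbolic composition rules from Section~\ref{sec:PsiDOs}, together with the weakly parametric framework of Section~\ref{sec:weakly}, produces $A(P-\lambda)^{-k}$ as a parameter-dependent pseudodifferential multiplier with polyhomogeneous symbol
\begin{equation*}
q(\xi,\lambda) \sim \sum_{j \geq 0} q_{\omega - km - j}(\xi,\lambda),
\end{equation*}
each term being jointly homogeneous of the indicated degree in $(\xi,\mu)$ outside a neighbourhood of $\xi = 0$.

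Under the hypothesis $-km + \omega < -n$, the symbol decays in $\xi$ uniformly in $\lambda$, and the boundedness results of Section~\ref{sec:boundedness} together with standard $L^1$-estimates on the kernel allow the identification
\begin{equation*}
\Tr_\psi\left[ A(P-\lambda)^{-k} \right] = \frac{1}{(2\pi)^n}\int_{\Rn} \psi\!\left( q(\xi,\lambda) \right) d\xi ,
\end{equation*}
up to an error produced by the symbol remainder that is of arbitrarily fast decay in $\lambda$. Substituting the polyhomogeneous expansion and integrating termwise reduces the problem to the asymptotic analysis of each $\int_{\Rn} \psi\!\left( q_{\omega - km - j}(\xi,\lambda) \right) d\xi$.

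For these integrals the standard Grubb--Seeley dichotomy applies: when the strictly homogeneous extension of $q_{\omega-km-j}$ is integrable near $\xi = 0$, the rescaling $\xi \mapsto |\lambda|^{1/m}\xi$ produces a clean power $\lambda^{(n + \omega - j)/m - k}$, yielding the coefficients $c_j$; the remaining indices, where the homogeneity degree crosses $-n$, produce after expansion of the non-strictly-homogeneous part of the weakly parametric symbol the logarithmic contributions $c_l' \log \lambda + c_l''$ at the integer orders $\lambda^{-k-l}$. The main obstacle will be uniform bookkeeping of the remainders: one must propagate the Hilbert $C^*$-module Schur-type estimates of Lemma~\ref{lem:boundedness.Schur-test} through arbitrarily many terms of the parametric expansion so that the tail of $q(\xi,\lambda)$ contributes an error that decays in $\lambda$ at the rate required to be absorbed into the asymptotic series. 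Once this uniform control is in place, each individual term is evaluated exactly as in \cite{GS:IM95}, with the scalar trace replaced throughout by the $\alpha$-invariant trace $\psi$ and its induced dual trace $\Tr_\psi$.
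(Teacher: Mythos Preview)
Your overall strategy matches the paper's: construct $A(P-\lambda)^{-k}$ as a weakly polyhomogeneous pseudodifferential multiplier, write $\Tr_\psi$ as $\int_{\Rn}\psi(q(\xi,\lambda))\,\dbar\xi$, and then analyze each homogeneous term $q_{\omega-km-j}$ separately. The paper carries this out via Theorem~\ref{thm:parametrix-existence} (parametrix), the Neumann series~(\ref{eq:asymptotic.resolvent-construction}) for the exact resolvent, and then invokes Theorem~\ref{thm:asymptotic-expansion-of-the-trace-of-weakly-parametric-psido}, whose proof performs the three-way split $\{|\xi|\geq|\mu|\}\cup\{|\xi|\leq 1\}\cup\{1\leq|\xi|\leq|\mu|\}$ and uses the Taylor expansion in $z=1/\mu$ (Theorem~\ref{thm:asymptotic-expansion-of-symbol-in-lambda}) rather than your ``strictly homogeneous extension integrable at $0$'' dichotomy. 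These are two standard presentations of the same Grubb--Seeley computation, so there is no substantive divergence in the analysis of the individual homogeneous pieces.

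Where your outline goes astray is in identifying the main obstacle. You write that one must ``propagate the Hilbert $C^*$-module Schur-type estimates of Lemma~\ref{lem:boundedness.Schur-test} through arbitrarily many terms of the parametric expansion so that the tail of $q(\xi,\lambda)$ contributes an error that decays in $\lambda$''. This is not how the argument runs, and if you try to implement it that way you will get stuck. The trace $\Tr_\psi$ is defined purely at the symbol level by~(\ref{eq:PsiDOs.natural-trace-on-PsiDOs}); no operator-norm or Schur-type control is needed to bound its remainders. The tail $q-\sum_{j<N}q_{\omega-km-j}$ lies in some $\tS^{m'',d}$ with $m''$ arbitrarily negative but $d$ fixed, and its contribution is handled entirely by the $z$-expansion of Theorem~\ref{thm:asymptotic-expansion-of-symbol-in-lambda}: it does \emph{not} decay arbitrarily fast in $\lambda$, but rather feeds into the integer-power coefficients $c_l''$. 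The Schur test (via Proposition~\ref{prop:boundedness.PsiDOs-boundedness}) enters exactly once, and for a different purpose: it gives $\|R(\lambda)\|_{\cL(\cH(\Rn,\cA))}=O(\lambda^{-1})$, which is what makes the Neumann series~(\ref{eq:asymptotic.resolvent-construction}) converge and hence produces an honest resolvent rather than just a parametrix. Once that is done, the correction $Q(\lambda)\sum_{j\geq 1}R(\lambda)^j$ has symbol in $\tS^{-\infty,-m}$ and its trace contribution is again governed by Theorem~\ref{thm:asymptotic-expansion-of-symbol-in-lambda}, contributing only to the $c_l''$. So the bookkeeping you should focus on is symbolic (tracking the bi-degrees $(m,d)$ through Theorems~\ref{thm:weakly.composition-formula-weakly-polyhomogeneous} and~\ref{thm:asymptotic-expansion-of-symbol-in-lambda}), not operator-theoretic.
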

We refer the reader to Definition~\ref{def:resolvents.elliptic-with-parameter} for the precise meaning of the notion of ellipticity with parameter. Furthermore, $\Tr_\psi$ is the trace on pseudodifferential multipliers induced from an $\alpha$-invariant continuous trace $\psi$ on $\cA$ (\cf\ (\ref{eq:PsiDOs.natural-trace-on-PsiDOs})). If the twisted $C^*$-dynamical system $(\cA,\Rn,\alpha,e)$ is projectively represented on a Hilbert space $\cH$ then one obtains a representation of the pseudodifferential multipliers as operators on $\cH$. In general $\Tr_\psi(P_f)$ does not coincide with the Hilbert space trace of the representation of $P_f$. In~\cite[Thm.\ 6.2]{LM:GAFA16} however, it was shown that for the standard parametric calculus the two traces coincide up to a summand of order $O(\lambda^{-N})$, $N$ arbitrary. This suffices to deduce an asymptotic expansion result analogous to Theorem~\ref{main.expansion.result.intro} also for the Hilbert space realization. For the weakly parametric calculus the method of loc. cit. does not work (see Remark~\ref{rem:asymptotic.two-traces-comparison} for more information on this point) and we have to leave this question for a future publication.

\section{Pseudodifferential Calculus for Twisted $C^*$-dynamical Systems} \label{sec:PsiDOs}
In this section, we recall the main definitions and properties of pseudodifferential multipliers on twisted $C^*$-dynamical systems~\cite{LM:GAFA16, LM:ANCG}. This is a generalization of the pseudodifferential calculus on $C^*$-dynamical systems described in~\cite{Ba:CRAS88, Co:CRAS80}.

\subsection{Twisted $C^*$-dynamical systems} Let $\cA$ be a unital $C^*$-algebra and $\alpha$ be a continuous action of $\Rn$ on $\cA$ by $C^*$-algebra automorphisms, i.e., $\alpha_t\in\textup{Aut}(\cA)$ for all $t\in\Rn$ and the map $t\mapsto\alpha_t(a)$ is norm-continuous for all $a\in\cA$. Furthermore, let
\begin{equation*}
e(x,y) := e^{i\acou {Bx} y}, \qquad x,y\in\Rn ,
\end{equation*}
where $B = (b_{kl})$ is a skew-symmetric real $n\times n$ matrix and $\acou \cdot \cdot$ is the standard inner product on $\Rn$. Then the quadruple $(\cA,\Rn,\alpha,e)$ forms a twisted $C^*$-dynamical system.

Throughout the paper, we let $(\cA,\Rn,\alpha,e)$ be the twisted $C^*$-dynamical system described above. We shall denote by $\cA^\infty$ the smooth subalgebra of $\cA$ induced by the action $\alpha$, i.e., those $a\in\cA$ such that the map $\Rn\ni t\mapsto\alpha_t(a)\in\cA$ is smooth. For $\gamma\in\N_0^n$, we define
\begin{equation*}
\delta^\gamma a := i^{-|\gamma|}\partial_t^\gamma\big|_{t=0} \alpha_t(a) , \qquad a\in\cA^\infty.
\end{equation*}
$\cA^\infty$ is a Fr\'echet space with respect to the locally convex topology generated by the semi-norms $a\mapsto\|\delta^\gamma a\|$, $\gamma\in\N_0^n$.

Let us denote by $\cS(\Rn,\cA^\infty)$ the space of Schwartz class maps with values in $\cA^\infty$. Let $f\in\cS(\Rn,\cA^\infty)$. Its Fourier transform $\hat{f}:\Rn\rightarrow\cA^\infty$ is defined by
\begin{equation*}
\hat{f}(\xi) = \int_{\Rn} e^{-i\acou x \xi} f(x) dx , \qquad \xi\in\Rn .
\end{equation*}
Furthermore, the inverse Fourier transform $f^\vee:\Rn\rightarrow\cA^\infty$ of $f\in\cS(\Rn,\cA^\infty)$ is defined by
\begin{equation*}
f^\vee(x) = \int_{\Rn} e^{i\acou \xi x} f(\xi) \dbar\xi , \qquad x\in\Rn ,
\end{equation*}
where we have set $\dbar\xi := (2\pi)^{-n}d\xi$. The Fourier transform and the inverse Fourier transform induce continuous linear isomorphisms on $\cS(\Rn,\cA^\infty)$ that are inverses of each other. We refer to~\cite[Appendix~B and Appendix~C]{HLP:Part1} for a more detailed account on the integration and the Fourier transform of a map with values in locally convex spaces.

We can endow the Schwartz space $\cS(\Rn,\cA^\infty)$ with the pre-$C^*$-module structure given by the inner product,
\begin{equation} \label{eq:PsiDOs.Cstar-module-inner-product}
\acou f g = \int_{\Rn} f(x)^*g(x) dx , \qquad f,g\in\cS(\Rn,\cA^\infty) .
\end{equation}
For $f\in\cS(\Rn,\cA^\infty)$, set
\begin{gather*}
(af)(x) = \alpha_{-x}(a)f(x) , \qquad a\in\cA^\infty , \\
(\bU_y f)(x) = e(x,-y)f(x-y) .
\end{gather*}
Then $\bU_y$, $y\in\Rn$, is a projective family of unitaries such that, for all $x,y\in\Rn$, we have
\begin{gather*}
\bU_x^* = \bU_{-x} , \\
\bU_x\bU_y = e(x,y)\bU_{x+y} , \\
\bU_x a \bU_{-x} = \alpha_x(a) , \qquad a\in\cA^\infty .
\end{gather*}
Given $f\in\cS(\Rn,\cA^\infty)$, we define the multiplier $M_f$ associated with $f$ by
\begin{equation*}
M_f = \int_{\Rn}f(x)\bU_x dx .
\end{equation*}
Then the space $\cS(\Rn,\cA^\infty)$ becomes a $*$-algebra with respect to the product and the adjoint defined by $M_f\circ M_g = M_{f*g}$ and $M_f^* = M_{f^*}$, where
\begin{gather}
\label{eq:PsiDOs.Hilbert-premodule-adjoint} f^*(x) = \alpha_x\left( f(-x)^* \right) , \\
\label{eq:PsiDOs.Hilbert-premodule-convolution} (f*g)(x) = \int_{\Rn} f(y)\alpha_y \big( g(x-y) \big)e(y,x) dy .
\end{gather}

Suppose that the algebra $\cA$ is equipped with an $\alpha$-invariant continuous trace $\psi$. In this case, $\psi$ induces the \emph{dual trace} on $\cS(\Rn,\cA^\infty)$ which is given by
\begin{equation} \label{eq:PsiDOs.dual-trace-definition}
\widehat{\psi}(f) := \psi\big( f(0) \big) = \int_{\Rn} \psi\big( \hat{f}(\xi) \big) \dbar\xi , \qquad \dbar\xi := (2\pi)^{-n}d\xi .
\end{equation}

\subsection{Symbols and pseudodifferential multipliers} \label{subsec:symbols-and-PsiDOs}
\begin{definition}[\cite{Ba:CRAS88, Co:CRAS80}] \label{def:PsiDOs:standard-symbols}
The symbol space $\tS^m(\Rn,\cA^\infty)$, $m\in\R$, consists of smooth maps $f:\Rn\rightarrow\cA^\infty$ such that, for all $\alpha,\beta\in\N_0^n$, there exists $C_{\alpha\beta}>0$ such that
\begin{equation*}
\big\| \delta^\alpha\partial_\xi^\beta f(\xi) \big\| \leq C_{\alpha\beta} \brak{\xi}^{m-|\beta|} ,
\end{equation*}
for all $\xi\in\Rn$. Here we denote $\brak{\xi} := (1+|\xi|^2)^{\frac{1}{2}}$.
\end{definition}

We endow $\tS^m(\Rn,\cA^\infty)$, $m\in\R$, with the locally convex topology generated by the semi-norms,
\begin{equation} \label{eq:PsiDOs.standard-symbol-semi-norms}
p_N(f) := \sup_{|\alpha|+|\beta|\leq N} \sup_{\xi\in\Rn} \, \brak{\xi}^{-m+|\beta|} \big\| \delta^\alpha\partial_\xi^\beta f(\xi) \big\| , \qquad N\in\N_0 .
\end{equation}
The space $\tS^m(\Rn,\cA^\infty)$ is a Fr\'echet space with respect to these semi-norms (see, e.g., \cite[Prop.\ 3.3]{HLP:Part1} for the proof).

\begin{lemma}[{see~\cite[Lem.\ 3.5]{HLP:Part1}}] \label{lem:PsiDOs.pointwise-product-continuity}
Let $m_1,m_2\in\R$. Then the product of $\cA^\infty$ gives rise to a continuous bilinear map from $\tS^{m_1}(\Rn,\cA^\infty)\times\tS^{m_2}(\Rn,\cA^\infty)$ to $\tS^{m_1+m_2}(\Rn,\cA^\infty)$.
\end{lemma}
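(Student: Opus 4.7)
The plan is to reduce everything to a two-variable Leibniz rule combined with the standard submultiplicativity of the $C^*$-norm. The essential observation is that because each $\alpha_t$ is a $C^*$-algebra automorphism, the map $\alpha_t$ is multiplicative, and hence differentiating $\alpha_t(ab)=\alpha_t(a)\alpha_t(b)$ at $t=0$ yields the Leibniz rule
\begin{equation*}
\delta^\gamma(ab)=\sum_{\gamma'\leq\gamma}\binom{\gamma}{\gamma'}\delta^{\gamma'}(a)\,\delta^{\gamma-\gamma'}(b),\qquad a,b\in\cA^\infty,\ \gamma\in\N_0^n.
\end{equation*}
In particular, the pointwise product of two smooth $\cA^\infty$-valued functions is again smooth into $\cA^\infty$, so $fg:\Rn\to\cA^\infty$ makes sense.

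Next, I would apply both Leibniz rules (for $\delta^\alpha$ coming from the automorphism property, and for $\partial_\xi^\beta$ coming from ordinary calculus of Fr\'echet-valued maps) to obtain
\begin{equation*}
\delta^\alpha\partial_\xi^\beta(fg)(\xi)
=\sum_{\alpha'\leq\alpha}\sum_{\beta'\leq\beta}
\binom{\alpha}{\alpha'}\binom{\beta}{\beta'}
\bigl(\delta^{\alpha'}\partial_\xi^{\beta'}f(\xi)\bigr)\bigl(\delta^{\alpha-\alpha'}\partial_\xi^{\beta-\beta'}g(\xi)\bigr).
\end{equation*}
Taking $\cA$-norms, using submultiplicativity $\|ab\|\leq\|a\|\|b\|$, and inserting the defining estimates of the symbol classes $\tS^{m_1}$ and $\tS^{m_2}$, each summand is bounded by a constant times $\brak{\xi}^{m_1-|\beta'|}\brak{\xi}^{m_2-|\beta|+|\beta'|}=\brak{\xi}^{m_1+m_2-|\beta|}$. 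Summing over the finitely many multi-indices yields
\begin{equation*}
\bigl\|\delta^\alpha\partial_\xi^\beta(fg)(\xi)\bigr\|\leq C_{\alpha\beta}\,\brak{\xi}^{m_1+m_2-|\beta|},
\end{equation*}
which is exactly the condition for $fg\in\tS^{m_1+m_2}(\Rn,\cA^\infty)$.

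For the continuity of the bilinear map, the same computation shows that the constant $C_{\alpha\beta}$ can be bounded by a finite sum of products of the seminorms of $f$ and $g$ of orders $\leq|\alpha|+|\beta|$. Concretely, there exists a constant $K_N>0$ such that
\begin{equation*}
p_N(fg)\leq K_N\,p_N(f)\,p_N(g),\qquad N\in\N_0,
\end{equation*}
with $p_N$ as in~(\ref{eq:PsiDOs.standard-symbol-semi-norms}). This continuity estimate against the product of seminorms in turn implies joint continuity of the bilinear map by a standard argument. There is no real obstacle in this proof; the only point that deserves checking is that the Leibniz rule for $\delta$ does hold in the twisted $C^*$-dynamical setting, and as noted above this is automatic because the $\alpha_t$ are algebra automorphisms, not merely linear maps.
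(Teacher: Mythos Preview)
Your proof is correct and is exactly the standard argument: the paper does not give its own proof of this lemma but simply cites \cite[Lem.\ 3.5]{HLP:Part1}, and the argument there is precisely the double Leibniz rule for $\delta^\alpha$ and $\partial_\xi^\beta$ combined with submultiplicativity of the $C^*$-norm, just as you have written. The seminorm estimate $p_N(fg)\leq K_N\,p_N(f)\,p_N(g)$ directly yields joint continuity of the bilinear map, so nothing further is needed.
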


\begin{definition}[\cite{Ba:CRAS88}]
Let $f\in\tS^m(\Rn,\cA^\infty)$, $m\in\R$, and let $f_j\in\tS^{m-j}(\Rn,\cA^\infty)$, $j = 0,1,\ldots$. We shall write $f(\xi)\sim\sum_{j\geq 0}f_j(\xi)$ when
\begin{equation} \label{eq:PsiDOs.asymptotic-expansion-definition}
f(\xi) - \sum_{j<N}f_j(\xi)\in\tS^{m-N}(\Rn,\cA^\infty) \qquad \text{for all $N\geq 1$} .
\end{equation}
\end{definition}
There is a version of Borel's lemma for $\cA^\infty$-valued symbols.

\begin{lemma}[{see also~\cite[Prop.\ 3.4]{FW:JPDOA11} and~\cite[Lem.\ 3.10]{HLP:Part1}}] \label{lem:PsiDOs.Borel-lemma}
Let $m\in\R$ and $f_j(\xi)\in\tS^{m-j}(\Rn,\cA^\infty)$, $j\geq 0$. Then there exists $f(\xi)\in\tS^m(\Rn,\cA^\infty)$ such that $f(\xi)\sim\sum_{j\geq 0}f_j(\xi)$ in the sense of~\textup{(\ref{eq:PsiDOs.asymptotic-expansion-definition})}.
\end{lemma}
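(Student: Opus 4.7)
The plan is to carry out the standard Borel-type cutoff construction, adapted to the Fréchet algebra $\cA^\infty$. Fix a cutoff function $\chi\in C^\infty(\Rn)$ with $\chi(\xi)=0$ for $|\xi|\leq 1/2$ and $\chi(\xi)=1$ for $|\xi|\geq 1$. For a strictly increasing sequence of positive reals $t_j\to\infty$ to be chosen below, I set
\[
f(\xi) := \sum_{j=0}^\infty \chi(\xi/t_j)\, f_j(\xi).
\]
For each fixed $\xi\in\Rn$ only finitely many summands are nonzero (those with $t_j\leq 2|\xi|$), so the series is pointwise a finite sum and a priori defines a smooth $\cA^\infty$-valued function.

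The key tail estimate is obtained as follows. Since $\chi(\cdot/t_j)$ is scalar, it commutes with $\delta^\alpha$, and by the Leibniz rule
\[
\bigl\|\delta^\alpha\partial_\xi^\beta\bigl[\chi(\xi/t_j)f_j(\xi)\bigr]\bigr\| \leq \sum_{\beta'+\beta''=\beta}\binom{\beta}{\beta'}\, t_j^{-|\beta'|}\, \bigl|(\partial^{\beta'}\chi)(\xi/t_j)\bigr|\,\bigl\|\delta^\alpha\partial_\xi^{\beta''}f_j(\xi)\bigr\|.
\]
On the support of each summand $|\xi|\geq t_j/2$, so $\brak{\xi}\geq c\,t_j$ for some fixed $c>0$; this lets me trade each factor $t_j^{-1}$ against $\brak{\xi}^{-1}$ and, for $j\geq N$, each factor $\brak{\xi}^{N-j}$ against $t_j^{N-j}$. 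Combined with the defining bounds of $f_j\in\tS^{m-j}(\Rn,\cA^\infty)$, this yields, for all $|\alpha|+|\beta|\leq N$ and $j\geq N$, an estimate of the form
\[
\brak{\xi}^{-(m-N)+|\beta|}\bigl\|\delta^\alpha\partial_\xi^\beta[\chi(\xi/t_j)f_j(\xi)]\bigr\| \leq C_N\, p_N(f_j)\, t_j^{\,N-j},
\]
so the $\tS^{m-N}$-seminorm of $\chi(\cdot/t_j)f_j$ can be made arbitrarily small by choosing $t_j$ large.

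Equipped with this, I choose $t_j$ inductively so that $t_j>t_{j-1}$ and $p_j\bigl(\chi(\cdot/t_j)f_j\bigr)\leq 2^{-j}$ as a seminorm on $\tS^{m-j+1}(\Rn,\cA^\infty)$ for every $j\geq 1$. For every fixed $N\in\N_0$ the tail $\sum_{j\geq N}\chi(\cdot/t_j)f_j$ is then Cauchy, hence convergent, in the Fréchet space $\tS^{m-N}(\Rn,\cA^\infty)$. Applying this with $N=0$ shows in particular $f\in\tS^m(\Rn,\cA^\infty)$. To read off the asymptotic expansion I write
\[
f(\xi)-\sum_{j<N}f_j(\xi) = \sum_{j\geq N}\chi(\xi/t_j)f_j(\xi) \;-\; \sum_{j<N}\bigl(1-\chi(\xi/t_j)\bigr)f_j(\xi);
\]
the first sum lies in $\tS^{m-N}(\Rn,\cA^\infty)$ by construction, while each term in the second (finite) sum is compactly supported in $\xi$ and therefore lies in $\tS^{m'}(\Rn,\cA^\infty)$ for every $m'\in\R$, in particular in $\tS^{m-N}(\Rn,\cA^\infty)$.

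There is no genuine obstacle here; the argument is structurally identical to the classical scalar Borel lemma (see e.g.\ \cite[Chap.\ 4]{Sh:Springer01}). The only point that requires a brief verification in this setting is that the derivations $\delta^\alpha$ of $\cA^\infty$ commute with multiplication by scalar functions, so that the semi-norms~(\ref{eq:PsiDOs.standard-symbol-semi-norms}) behave under the Leibniz rule exactly as in the scalar case. Given this, the above telescoping and inductive choice of $t_j$ go through verbatim.
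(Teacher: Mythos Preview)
Your proposal is correct and follows the standard Borel cutoff construction. The paper itself does not supply a proof of this lemma; it only cites \cite[Prop.\ 3.4]{FW:JPDOA11} and \cite[Lem.\ 3.10]{HLP:Part1}, and your argument is precisely the classical construction one finds in such references, carried over verbatim to $\cA^\infty$-valued symbols.
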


We say that a map $f:\Rn\rightarrow\cA^\infty$ is \emph{homogeneous} of degree $m\in\R$ if
\begin{equation*}
f(\lambda\xi) = \lambda^mf(\xi) \qquad \text{for all $\lambda\geq 1$ and $\xi\in\Rn\setminus B(0,1)$} .
\end{equation*}
Here $B(0,r)$, $r>0$, denotes the open ball of radius $r$ centered at the origin. Note that if $f:\Rn\rightarrow\cA^\infty$ is homogeneous of degree $m\in\R$, then $f\in\tS^m(\Rn,\cA^\infty)$.

\begin{definition} \label{def.PsiDOs.classical-symbols}
The space of classical ($1$-step polyhomogeneous) symbols, denoted by $\tCS^m(\Rn,\cA^\infty)$, $m\in\R$, consists of maps $f(\xi)\in\tS^m(\Rn,\cA^\infty)$ that admit an asymptotic expansion,
\begin{equation*}
f(\xi)\sim\sum_{j\geq 0}f_{m-j}(\xi) ,
\end{equation*}
where $\sim$ is meant in the sense of~(\ref{eq:PsiDOs.asymptotic-expansion-definition}) and $f_{m-j}:\Rn\rightarrow\cA^\infty$ is a smooth homogeneous map of degree $m-j$ for each $j\geq 0$.
\end{definition}

\begin{remark} \label{rem:PsiDOs.classical-symbol-partial-derivative}
Let $f(\xi)\in\tCS^m(\Rn,\cA^\infty)$, $m\in\R$, be such that $f(\xi)\sim\sum_{j\geq 0}f_{m-j}(\xi)$. Then, for every $j\geq 0$ and $\alpha,\beta\in\N_0^n$, $\delta^\alpha\partial_\xi^\beta f_{m-j}(\xi)$ is homogeneous of degree $m-|\beta|-j$. Furthermore, it follows from the very definition of classical symbols that, for all $\alpha,\beta\in\N_0^n$, we have $\delta^\alpha\partial_\xi^\beta f(\xi)\in\tCS^{m-|\beta|}(\Rn,\cA^\infty)$ and $\delta^\alpha\partial_\xi^\beta f(\xi)\sim\sum_{j\geq 0}\delta^\alpha\partial_\xi^\beta f_{m-j}(\xi)$ in the sense of~(\ref{eq:PsiDOs.asymptotic-expansion-definition}).
\end{remark}

Following~\cite[\S{3}]{LM:GAFA16} it can be shown that, for $f,u\in\cS(\Rn,\cA^\infty)$ we have
\begin{equation} \label{eq:PsiDOs:inverse-Fourier-transform-multiplier-formula}
(M_{f^\vee}u)(x) = \int_{\Rn} e^{i\acou x \xi} \alpha_{-x}(f(\xi+Bx))\hat{u}(\xi) \dbar\xi .
\end{equation}
Note that this integral makes sense even for $f\in\tS^m(\Rn,\cA^\infty)$, $m\in\R$, and $u\in\cS(\Rn,\cA^\infty)$.

\begin{definition}[\cite{LM:GAFA16, LM:ANCG}]
Let $f\in\tS^m(\Rn,\cA^\infty)$, $m\in\R$. The pseudodifferential multiplier $P_f$ associated with $f$ is defined by
\begin{equation} \label{eq:PsiDOs.PsiDOs.definition}
P_f u := M_{f^\vee} u , \qquad u\in\cS(\Rn,\cA^\infty) .
\end{equation}
The space of pseudodifferential multipliers of order $m$ is denoted by $\tL_\sigma^m(\Rn,\cA^\infty)$. Furthermore, if $f\in\tCS^m(\Rn,\cA^\infty)$, we say that $P_f$ is a classical pseudodifferential multiplier, and we shall denote the space of classical pseudodifferential multipliers of order $m$ by $\tCL_\sigma^m(\Rn,\cA^\infty)$.
\end{definition}

As mentioned in~\cite{LM:GAFA16} the space of pseudodifferential multipliers forms a $*$-algebra. More precisely, we have the following results.

\begin{theorem}[{Compare~\cite[Thm.\ 3.2]{LM:GAFA16}}] \label{thm:PsiDOs.composition-formula}
Let $f\in\tS^m(\Rn,\cA^\infty)$ and $g\in\tS^{m'}(\Rn,\cA^\infty)$. We set
\begin{equation} \label{eq:PsiDOs.composition-formula}
f\sharp g(\xi) = \int \bigg( \int e^{-i\acou y \eta} f(\eta+\xi)\alpha_{-y} \big( g(\xi+By) \big) dy \bigg) \dbar\eta .
\end{equation}
Then the following holds.
\begin{enumerate}
\item The map $(f,g)\mapsto f\sharp g$ gives rise to a continuous bilinear map from $\tS^m(\Rn,\cA^\infty)\times\tS^{m'}(\Rn,\cA^\infty)$ to $\tS^{m+m'}(\Rn,\cA^\infty)$.
\item We have $P_fP_g = P_{f\sharp g}$.
\item The symbol $f\sharp g\in\tS^{m+m'}(\Rn,\cA^\infty)$ admits the asymptotic expansion,
\begin{equation} \label{eq:PsiDOs.composition-asymptotic-expansion}
f\sharp g(\xi)\sim\sum_\alpha \frac{(-i)^{|\alpha|}}{\alpha !} (\partial_\xi^\alpha f)(\xi) \partial_x^\alpha|_{x=0} \Big( \alpha_{-x} \big( g(\xi+Bx) \big) \Big) .
\end{equation}
Here $\sim$ is taken in the sense of~\textup{(\ref{eq:PsiDOs.asymptotic-expansion-definition})}.
\end{enumerate}
\end{theorem}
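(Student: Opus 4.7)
The plan is to establish (ii) first, then (i), and finally (iii). For (ii), I would apply both sides to $u\in\cS(\Rn,\cA^\infty)$ and use formula~(\ref{eq:PsiDOs:inverse-Fourier-transform-multiplier-formula}). Starting from $(P_f P_g u)(x) = \int e^{i\acou{x}{\xi}} \alpha_{-x}(f(\xi+Bx)) \widehat{P_g u}(\xi) \dbar\xi$, I would expand $\widehat{P_g u}(\xi) = \iint e^{-i\acou{z}{\xi-\eta}} \alpha_{-z}(g(\eta+Bz)) \hat{u}(\eta)\, dz\, \dbar\eta$, swap the order of integration (justified on Schwartz symbols, which suffices by density), then substitute $z = x - y$ and change variable $\zeta = \eta + Bx$. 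Using $\alpha_{-z} = \alpha_{-x}\alpha_{y}$ to pull $\alpha_{-x}$ out of the inner integrals, the expression collapses to $\int e^{i\acou{x}{\zeta}} \alpha_{-x}(h(\zeta+Bx)) \hat{u}(\zeta) \dbar\zeta$ where $h(\xi) = \iint e^{-i\acou{y}{\eta}} f(\eta+\xi)\alpha_{-y}(g(\xi+By))\, dy\, \dbar\eta$ coincides with the right-hand side of~(\ref{eq:PsiDOs.composition-formula}).

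For (i), the oscillatory integral in~(\ref{eq:PsiDOs.composition-formula}) must be interpreted properly since $f$ and $g$ are only symbols. I would regularize by inserting a cut-off $\chi(\varepsilon\eta)\chi(\varepsilon y)$ with $\chi\in C_c^\infty(\Rn)$ equal to $1$ near the origin, and then use the identities $\brak{y}^{-2N_1}(1-\Delta_\eta)^{N_1} e^{-i\acou{y}{\eta}} = e^{-i\acou{y}{\eta}}$ and $\brak{\eta}^{-2N_2}(1-\Delta_y)^{N_2} e^{-i\acou{y}{\eta}} = e^{-i\acou{y}{\eta}}$ to integrate by parts in $\eta$ and $y$. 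The derivatives falling on $f(\eta+\xi)$ produce the symbol estimate $\brak{\eta+\xi}^{m-2N_2}$, while those falling on $\alpha_{-y}(g(\xi+By))$ produce linear combinations of $\delta^\gamma\partial_\xi^\delta g$ evaluated at $\xi + By$, bounded via Lemma~\ref{lem:PsiDOs.pointwise-product-continuity}-type manipulations. For $N_1, N_2$ sufficiently large relative to $n$, $m$, $m'$, one can pass $\varepsilon\to 0$ by dominated convergence. Symbol estimates for $\delta^\alpha\partial_\xi^\beta(f\sharp g)$ then follow by differentiating under the integral sign and iterating the procedure; the resulting bounds are polynomial in the seminorms $p_N(f)$ and $p_N(g)$, yielding the continuous bilinearity.

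For (iii), I would Taylor expand $f(\eta+\xi)=\sum_{|\alpha|<N}\frac{\eta^\alpha}{\alpha!}(\partial_\xi^\alpha f)(\xi) + r_N(\xi,\eta)$ with the usual integral remainder. For each Taylor term, use $\eta^\alpha e^{-i\acou{y}{\eta}} = i^{|\alpha|}\partial_y^\alpha e^{-i\acou{y}{\eta}}$ and integrate by parts in $y$ to transfer the derivatives onto $\alpha_{-y}(g(\xi+By))$; then $\int e^{-i\acou{y}{\eta}}\dbar\eta = \delta(y)$ collapses the $(y,\eta)$-integration, producing the expected term $(-i)^{|\alpha|}(\partial_\xi^\alpha f)(\xi)\,\partial_y^\alpha|_{y=0}[\alpha_{-y}(g(\xi+By))]$ of~(\ref{eq:PsiDOs.composition-asymptotic-expansion}). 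It remains to verify that the remainder lies in $\tS^{m+m'-N}$, which is done by applying the regularization/integration-by-parts scheme of~(i) to the integral remainder together with the Peetre-type estimate $\brak{\xi+t\eta}^{m-N}\leq C\brak{\xi}^{m-N}\brak{\eta}^{|m-N|}$.

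The main obstacle is bookkeeping the twisting factor $\alpha_{-y}(g(\xi+By))$, whose derivatives in $y$ simultaneously generate $\delta^\gamma$ applied to $g$ and evaluate $\partial_\xi^\delta g$ at the translated argument $\xi + By$. Because $|By|\leq\|B\||y|$ can be arbitrarily large, the $y$-integrand grows like $\brak{y}^{|m'|}$ and one must choose the regularization order $N_1$ in $y$ large enough to beat this growth after the $\brak{y}^{-2N_1}$ factor is extracted. In the untwisted case $B=0$ the argument collapses to Baaj's original computation~\cite{Ba:CRAS88}, so it is precisely the interplay between the $B$-shift of the cotangent variable and the $\alpha$-action that distinguishes the present proof.
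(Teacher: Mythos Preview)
The paper does not supply its own proof of this theorem. It states the result and, in the two remarks that follow, defers to \cite[Thm.~3.2]{LM:GAFA16} for the composition formula and asymptotic expansion, to \cite[\S4]{HLP:Part1} for the interpretation of~(\ref{eq:PsiDOs.composition-formula}) as an $\cA^\infty$-valued oscillatory integral, and to \cite[Prop.~7.5 and~8.2]{HLP:Part2} for the continuity assertion. Your proposal---regularize by cut-offs, integrate by parts via $\brak{y}^{-2N_1}(1-\Delta_\eta)^{N_1}$ and $\brak{\eta}^{-2N_2}(1-\Delta_y)^{N_2}$, then Taylor expand $f(\eta+\xi)$ for the asymptotic expansion---is precisely the oscillatory-integral method implemented in those references, so there is no difference of approach to report.

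Two minor points of bookkeeping. First, the ordering ``(ii) then (i)'' hides a small circularity: the identity $P_fP_g=P_{f\sharp g}$ for Schwartz symbols does not extend to general $f\in\tS^m$, $g\in\tS^{m'}$ by ``density'' alone, since $\cS(\Rn,\cA^\infty)$ is not dense in $\tS^m(\Rn,\cA^\infty)$ for the Fr\'echet topology~(\ref{eq:PsiDOs.standard-symbol-semi-norms}). One either proves (i) first and then checks (ii) directly from the regularized integral, or uses the standard approximation (bounded in $\tS^m$, convergent in $\tS^{m+\varepsilon}$) together with the continuity from (i). Second, in your derivation of (ii) the substitution $z=x-y$ gives $\alpha_{-z}=\alpha_{-x}\alpha_{y}$, whereas the formula~(\ref{eq:PsiDOs.composition-formula}) requires $\alpha_{-y}$ inside; the correct change of variable is $z=x+y$, so that $\alpha_{-z}=\alpha_{-x}\alpha_{-y}$ and $Bz=Bx+By$ match the target. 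Neither point is a genuine gap in the argument.
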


\begin{theorem}[{Compare~\cite[Thm.\ 3.2]{LM:GAFA16}}] \label{thm:PsiDOs.adjoint-formula}
Let $f\in\tS^m(\Rn,\cA^\infty)$. We set
\begin{equation} \label{eq:PsiDOs.adjoint-formula}
f^\star(\xi) = \int \bigg( \int e^{i\acou y \eta} \alpha_y \big( f(\eta+\xi)^* \big) dy \bigg) \dbar\eta .
\end{equation}
Then the following holds.
\begin{enumerate}
\item The map $f\mapsto f^\star$ gives rise to a continuous anti-linear map from $\tS^m(\Rn,\cA^\infty)$ to itself.
\item We have $P_f^* = P_{f^\star}$. Here $P_f^*$ is the formal adjoint of $P_f$ with respect to the inner product~\textup{(\ref{eq:PsiDOs.Cstar-module-inner-product})}.
\item The symbol $f^\star\in\tS^m(\Rn,\cA^\infty)$ admits the asymptotic expansion,
\begin{equation} \label{eq:PsiDOs.adjoint-asymptotic-expansion}
f^\star(\xi)\sim\sum_\alpha \frac{1}{\alpha !} \delta^\alpha\partial_\xi^\alpha \left[ f(\xi)^* \right] .
\end{equation}
Here $\sim$ is taken in the sense of~\textup{(\ref{eq:PsiDOs.asymptotic-expansion-definition})}.
\end{enumerate}
\end{theorem}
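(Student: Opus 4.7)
My plan is to exploit the convolution description $P_f = M_{f^\vee}$ together with the already-established identity $M_g^* = M_{g^*}$, where $g^*(x) = \alpha_x(g(-x)^*)$, from (\ref{eq:PsiDOs.Hilbert-premodule-adjoint}). This immediately gives $P_f^* = M_{(f^\vee)^*}$, so part (2) reduces to identifying the symbol whose inverse Fourier transform is $(f^\vee)^*$. A direct computation---substituting $f^\vee(-x) = \int e^{-i\acou{\eta}{x}}f(\eta)\dbar\eta$, taking the adjoint, and pushing $\alpha_x$ through the integral by the framework in~\cite{HLP:Part1}---yields $(f^\vee)^*(x) = \int e^{i\acou{\eta}{x}}\alpha_x(f(\eta)^*)\dbar\eta$. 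Taking the Fourier transform in $x$ and shifting $\eta \mapsto \eta+\xi$ then produces, formally, exactly the oscillatory integral (\ref{eq:PsiDOs.adjoint-formula}) defining $f^\star$.

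The remaining substance is to make this oscillatory integral rigorous and extract its asymptotics. For part (1), I would regularize by inserting a cutoff $\chi_\epsilon(y,\eta) = \chi(\epsilon y,\epsilon\eta)$ with $\chi\in\cS(\R^{2n})$ and $\chi(0,0)=1$, and then apply repeated integration by parts built on the identities
\begin{equation*}
(1-\Delta_\eta)^N e^{i\acou{y}{\eta}} = \brak{y}^{2N}e^{i\acou{y}{\eta}} , \qquad (1-\Delta_y)^N e^{i\acou{y}{\eta}} = \brak{\eta}^{2N}e^{i\acou{y}{\eta}} .
\end{equation*}
Alternating applications render the integrand absolutely integrable uniformly in $\epsilon$, and letting $\epsilon\to 0$ produces a smooth $\cA^\infty$-valued function $f^\star(\xi)$. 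Since only the factor $\alpha_y(f(\eta+\xi)^*)$ carries $\xi$, and since $y$- and $\eta$-derivatives of it reduce via Leibniz and the chain rule to $\delta$- and $\partial_\xi$-derivatives of $f$, the same procedure simultaneously yields estimates of the form
\begin{equation*}
\|\delta^\beta\partial_\xi^\gamma f^\star(\xi)\| \leq C_{\beta\gamma}\, p_{N(\beta,\gamma)}(f)\, \brak{\xi}^{m-|\gamma|} ,
\end{equation*}
which both establishes membership in $\tS^m(\Rn,\cA^\infty)$ and continuity of $f\mapsto f^\star$; anti-linearity is immediate from the involution $a\mapsto a^*$.

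For part (3), I would Taylor-expand $\alpha_y$ in $y$ at the origin,
\begin{equation*}
\alpha_y(a) = \sum_{|\alpha|<N}\frac{(iy)^\alpha}{\alpha!}\delta^\alpha a + R_N(y)(a) ,
\end{equation*}
apply this to $a = f(\eta+\xi)^*$, and for each polynomial term replace $(iy)^\alpha e^{i\acou{y}{\eta}}$ with $\partial_\eta^\alpha e^{i\acou{y}{\eta}}$, integrate by parts in $\eta$, and use the Fourier-inversion identity $\int\int e^{i\acou{y}{\eta}} g(\eta)\,dy\,\dbar\eta = g(0)$. This yields exactly $\frac{1}{\alpha!}\delta^\alpha\partial_\xi^\alpha f(\xi)^*$, matching (\ref{eq:PsiDOs.adjoint-asymptotic-expansion}). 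The main obstacle I anticipate is controlling the Taylor remainder $R_N$: since it carries a factor $y^\alpha$ with $|\alpha|=N$, I must redistribute the integration-by-parts operators so that the $y^N$ factor is absorbed while still retaining enough $\eta$-decay to keep the integral convergent and to produce the required $\brak{\xi}^{m-N}$ gain. This bookkeeping parallels the proof of the composition formula in Theorem~\ref{thm:PsiDOs.composition-formula}, now adapted to the one-symbol setting here.
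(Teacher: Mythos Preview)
Your approach is correct and is essentially the one the paper has in mind: note that the paper does not supply its own proof of this theorem but instead refers to~\cite[Thm.\ 3.2]{LM:GAFA16} for the asymptotic expansion and to~\cite[Prop.\ 7.5 and Prop.\ 8.2]{HLP:Part2} for the continuity via $\cA^\infty$-valued oscillatory integrals, which is exactly the regularization-plus-integration-by-parts scheme you outline.

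One small point worth tightening: the identity $M_g^* = M_{g^*}$ from~(\ref{eq:PsiDOs.Hilbert-premodule-adjoint}) is stated in the paper only for $g\in\cS(\Rn,\cA^\infty)$, whereas $f^\vee$ is in general merely a tempered distribution when $f\in\tS^m$ with $m\geq 0$. Your formal derivation of the candidate $f^\star$ is fine as motivation, but the actual verification of $P_f^* = P_{f^\star}$ should either proceed by a direct pairing computation $\acou{P_f u}{v} = \acou{u}{P_{f^\star}v}$ for $u,v\in\cS(\Rn,\cA^\infty)$ (using the oscillatory-integral definition of $f^\star$ you have already justified in part~(1)), or by approximating $f$ by Schwartz symbols and invoking the continuity of $f\mapsto P_f$ and $f\mapsto f^\star$. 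Either route is routine, but as written your part~(2) leans on an identity that has not yet been extended to the relevant class.
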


\begin{remark}
The integrals in~(\ref{eq:PsiDOs.composition-formula}) and~(\ref{eq:PsiDOs.adjoint-formula}) make sense as an $\cA^\infty$-valued oscillatory integral. We refer to~\cite[\S{4}]{HLP:Part1} for more detailed accounts on $\cA^\infty$-valued oscillatory integrals. Although the oscillatory integral is constructed only for maps with values in the \emph{smooth noncommutative torus} in~\cite{HLP:Part1} instead of $\cA^\infty$-valued maps, the construction in~\cite{HLP:Part1} holds \emph{verbatim} in our setting.
\end{remark}

\begin{remark}
The asymptotic expansions~(\ref{eq:PsiDOs.composition-asymptotic-expansion}) and~(\ref{eq:PsiDOs.adjoint-asymptotic-expansion}) are also mentioned in~\cite{LM:GAFA16}. However, the continuity assertions in Theorem~\ref{thm:PsiDOs.composition-formula} and Theorem~\ref{thm:PsiDOs.adjoint-formula} are not explicitly stated in~\cite{LM:GAFA16}. These continuity results can be proved by using $\cA^\infty$-valued oscillatory integrals along similar lines as in~\cite[Proposition~7.5 and Proposition 8.2]{HLP:Part2}. Note that the symbol of the adjoint operator~(\ref{eq:PsiDOs.adjoint-formula}) and its asymptotic expansion~(\ref{eq:PsiDOs.adjoint-asymptotic-expansion}) agree with the corresponding ones in the untwisted case mentioned in~\cite{Ba:CRAS88}. However, the composition formula~(\ref{eq:PsiDOs.composition-formula}) and its asymptotic expansion~(\ref{eq:PsiDOs.composition-asymptotic-expansion}) are different from the corresponding ones in the untwisted case mentioned in~\cite{Ba:CRAS88}. We refer to~\cite[Rem.\ 3.3]{LM:GAFA16} for the comparison of the asymptotic expansions of the composition products in the twisted and untwisted cases.
\end{remark}

Now we look for the asymptotic expansion of the composition product~(\ref{eq:PsiDOs.composition-formula}) of two classical symbols. To this end it is convenient to compute the asymptotic expansion of the last factor $\partial_x^\alpha|_{x=0} \big( \alpha_{-x} ( g(\xi+Bx) ) \big)$ of each summand of the expansion in~(\ref{eq:PsiDOs.composition-asymptotic-expansion}). Let $f:\Rn\rightarrow\cA^\infty$ be a smooth map. Note that we have
\begin{equation} \label{eq:PsiDOs.twisted-partial-derivative-order-one}
\partial_{x_j}\big|_{x=0} f(\xi+Bx) = \sum_{k=1}^n B_{kj}\partial_{\xi_k}f(\xi) =: \partial_{B,\xi_j} f(\xi) , \qquad j=1,\ldots,n .
\end{equation}
Given a multi-index $\alpha = (\alpha_1,\ldots,\alpha_n)\in\N_0^n$, let us denote $\partial_{B,\xi_1}^{\alpha_1}\cdots\partial_{B,\xi_n}^{\alpha_n}$ by $\partial_{B,\xi}^\alpha$. It then follows from~(\ref{eq:PsiDOs.twisted-partial-derivative-order-one}) that
\begin{equation} \label{eq:PsiDOs.twisted-higher-partial-derivative}
\partial_x^\alpha\big|_{x=0} f(\xi+Bx) = \partial_{B,\xi}^\alpha f(\xi) .
\end{equation}

Now let $f(\xi)\in\tCS^m(\Rn,\cA^\infty)$, $f(\xi)\sim\sum_{j\geq 0}f_{m-j}(\xi)$, be a classical symbol and $\alpha\in\N_0^n$. By using~(\ref{eq:PsiDOs.twisted-higher-partial-derivative}) we see that
\begin{align}
\nonumber f^{B,\alpha}(\xi) &:= \partial_x^\alpha\big|_{x=0} \alpha_{-x} \big( f(\xi+Bx) \big) \\\nonumber
&= \sum_{\beta+\gamma = \alpha} \binom \alpha \beta i^{|\beta|}\delta^\beta \big (\partial_x^\gamma\big|_{x=0} f(\xi+Bx) \big) \\
&= \sum_{\beta+\gamma = \alpha} \binom \alpha \beta i^{|\beta|}\delta^\beta \partial_{B,\xi}^\gamma f(\xi) . \label{eq:PsiDOs.twisted-partial-derivative-computation}
\end{align}
We know by Remark~\ref{rem:PsiDOs.classical-symbol-partial-derivative} that each summand $i^{|\beta|}\delta^\beta \partial_{B,\xi}^\gamma f(\xi)$ belongs to $\tCS^{m-|\gamma|}(\Rn,\cA^\infty)$ and
\begin{equation} \label{eq:PsiDOs.twisted-partial-derivative-each-summand-classical-symbol}
i^{|\beta|}\delta^\beta \partial_{B,\xi}^\gamma f(\xi)\sim\sum_{j\geq 0}i^{|\beta|}\delta^\beta \partial_{B,\xi}^\gamma f_{m-j}(\xi) ,
\end{equation}
in the sense of~(\ref{eq:PsiDOs.asymptotic-expansion-definition}). Combining this with~(\ref{eq:PsiDOs.twisted-partial-derivative-computation}) shows that $f^{B,\alpha}(\xi)$ is a classical symbol in $\tCS^m(\Rn,\cA^\infty)$. Furthermore, by using~(\ref{eq:PsiDOs.twisted-partial-derivative-each-summand-classical-symbol}) we also see that $f^{B,\alpha}(\xi)\sim\sum_{j\geq 0}f_{m-j}^{B,\alpha}(\xi)$ in the sense of~(\ref{eq:PsiDOs.asymptotic-expansion-definition}), where $f_{m-j}^{B,\alpha}(\xi)$, $j\geq 0$, are homogeneous symbols of degree $m-j$ given by
\begin{equation} \label{eq:PsiDOs.twisted-partial-derivative-homogeneous-parts}
f_{m-j}^{B,\alpha}(\xi) = \begin{cases} \displaystyle \sum_{k=0}^j \sum_{\substack{\beta+\gamma = \alpha \\ |\gamma| = k}} \binom \alpha \beta i^{|\beta|}\delta^\beta \partial_{B,\xi}^\gamma f_{m-j+k}(\xi) &\mbox{if } 0\leq j<|\alpha| \\
\displaystyle \sum_{k=0}^{|\alpha|} \sum_{\substack{\beta+\gamma = \alpha \\ |\gamma| = k}} \binom \alpha \beta i^{|\beta|}\delta^\beta \partial_{B,\xi}^\gamma f_{m-j+k}(\xi) & \mbox{if } j\geq|\alpha| \end{cases} .
\end{equation}
In particular, we have
\begin{equation*}
f_m^{B,\alpha}(\xi) = i^{|\alpha|}\delta^\alpha f_m(\xi) .
\end{equation*}
Summarizing the above discussion, we obtain the following lemma.

\begin{lemma} \label{lem:PsiDOs.twisted-partial-derivative-of-classical-symbol}
Let $f(\xi)\in\tCS^m(\Rn,\cA^\infty)$, $f(\xi)\sim\sum_{j\geq 0}f_{m-j}(\xi)$ and $\alpha\in\N_0^n$. Then $f^{B,\alpha}(\xi)$ given as in~\textup{(\ref{eq:PsiDOs.twisted-partial-derivative-computation})} belongs to $\tCS^m(\Rn,\cA^\infty)$ and $f^{B,\alpha}(\xi)\sim\sum_{j\geq 0}f_{m-j}^{B,\alpha}(\xi)$, where $f_{m-j}^{B,\alpha}(\xi)$, $j\geq 0$, are homogeneous symbols of degree $m-j$ given by~\textup{(\ref{eq:PsiDOs.twisted-partial-derivative-homogeneous-parts})}.
\end{lemma}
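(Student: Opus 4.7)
The proof is a matter of organizing the computation preceding the statement into a clean argument. The plan is to expand the Leibniz-type identity~\textup{(\ref{eq:PsiDOs.twisted-partial-derivative-computation})}, identify each summand as a classical symbol via Remark~\ref{rem:PsiDOs.classical-symbol-partial-derivative}, and then collect the resulting homogeneous contributions by their common degree.

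First I would record that $\partial_x^\alpha|_{x=0} \alpha_{-x}(f(\xi+Bx))$, upon applying the Leibniz rule together with~\textup{(\ref{eq:PsiDOs.twisted-higher-partial-derivative})} and the identity $\delta^\beta a = i^{-|\beta|}\partial_t^\beta|_{t=0}\alpha_t(a)$, decomposes as in~\textup{(\ref{eq:PsiDOs.twisted-partial-derivative-computation})} into the finite sum of products $i^{|\beta|}\delta^\beta \partial_{B,\xi}^\gamma f(\xi)$ over $\beta + \gamma = \alpha$. Since $\partial_{B,\xi}^\gamma$ is a constant-coefficient polynomial in the ordinary partials $\partial_{\xi_k}$, Remark~\ref{rem:PsiDOs.classical-symbol-partial-derivative} applied term by term yields that $i^{|\beta|}\delta^\beta \partial_{B,\xi}^\gamma f \in \tCS^{m-|\gamma|}(\Rn,\cA^\infty)$ with asymptotic expansion
\begin{equation*}
i^{|\beta|}\delta^\beta \partial_{B,\xi}^\gamma f(\xi) \sim \sum_{l\geq 0} i^{|\beta|}\delta^\beta \partial_{B,\xi}^\gamma f_{m-l}(\xi),
\end{equation*}
each term on the right being homogeneous of degree $m - l - |\gamma|$. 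Summing these expansions over the finitely many $(\beta,\gamma)$ and using that $\tS^{m-N}(\Rn,\cA^\infty)$ is closed under finite sums, the remainders combine into a single remainder in $\tS^{m-N}(\Rn,\cA^\infty)$ for every $N$, which places $f^{B,\alpha}$ in $\tS^m(\Rn,\cA^\infty)$.

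To extract the homogeneous degree-$(m-j)$ component of the formal expansion, I would set $k := |\gamma|$ and $l := j - k$ in the doubly indexed sum, noting that the constraints $l\geq 0$ and $|\gamma|\leq|\alpha|$ force $0\leq k\leq \min(j, |\alpha|)$. Grouping the surviving terms produces exactly the expression in~\textup{(\ref{eq:PsiDOs.twisted-partial-derivative-homogeneous-parts})}, with the case split at $j = |\alpha|$ reflecting whether the constraint $k\leq j$ or $k\leq|\alpha|$ is binding. As a sanity check, the specialization $f_m^{B,\alpha} = i^{|\alpha|}\delta^\alpha f_m$ corresponds to the case $j = 0$: only $k = 0$ contributes, and the sum over $\beta+\gamma=\alpha$ with $|\gamma|=0$ collapses to $\gamma = 0$, $\beta = \alpha$.

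The only mildly delicate point is the index bookkeeping in the reindexing: one must verify that the case split in~\textup{(\ref{eq:PsiDOs.twisted-partial-derivative-homogeneous-parts})} correctly encodes the constraint $|\gamma|\leq|\alpha|$, and that terms of different homogeneity degrees are not conflated. Beyond this, the argument is a purely formal manipulation and invokes no analysis beyond a single application of Remark~\ref{rem:PsiDOs.classical-symbol-partial-derivative}.
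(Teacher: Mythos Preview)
Your proposal is correct and follows essentially the same approach as the paper: the paper's proof is precisely the discussion preceding the lemma statement, which applies the Leibniz decomposition~(\ref{eq:PsiDOs.twisted-partial-derivative-computation}), invokes Remark~\ref{rem:PsiDOs.classical-symbol-partial-derivative} to identify each summand $i^{|\beta|}\delta^\beta\partial_{B,\xi}^\gamma f$ as a classical symbol of order $m-|\gamma|$ with expansion~(\ref{eq:PsiDOs.twisted-partial-derivative-each-summand-classical-symbol}), and then regroups by homogeneity degree to obtain~(\ref{eq:PsiDOs.twisted-partial-derivative-homogeneous-parts}). Your write-up is slightly more explicit about the index bookkeeping behind the case split at $j=|\alpha|$, but the underlying argument is identical.
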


The following two results can be found in~\cite{HLP:Part1}.

\begin{proposition}[{\cite[Rem.\ 3.23]{HLP:Part1}}] \label{prop:PsiDOs.expanding-symbol-by-classical-symbols}
Let $f(\xi)\in\tS^m(\Rn,\cA^\infty)$ be such that $f(\xi)\sim\sum_{\ell\geq 0}f^{(\ell)}(\xi)$, where $f^{(\ell)}(\xi)\in\tCS^{m-\ell}(\Rn,\cA^\infty)$, $f^{(\ell)}(\xi)\sim\sum_{j\geq 0}f_{m-\ell-j}^{(\ell)}(\xi)$ is a classical symbol for all $\ell\geq 0$. Then $f(\xi)$ is a classical symbol as well and we have $f(\xi)\sim\sum_{j\geq 0}f_{m-j}(\xi)$ in the sense of Definition~\ref{def.PsiDOs.classical-symbols}, where $f_{m-j}(\xi)$, $j\geq 0$, is the homogeneous symbol of degree $m-j$ given by 
\begin{equation*}
f_{m-j}(\xi) = \sum_{\ell\leq j}f_{m-j}^{(\ell)}(\xi) .
\end{equation*}
\end{proposition}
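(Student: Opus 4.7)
The plan is to verify directly from Definition~\ref{def.PsiDOs.classical-symbols} that $f$ is classical with the claimed homogeneous pieces. Two items need to be checked: (i) each $f_{m-j}(\xi) := \sum_{\ell \leq j} f^{(\ell)}_{m-j}(\xi)$ is a smooth $\cA^\infty$-valued map that is homogeneous of degree $m-j$; (ii) for every $N \geq 1$,
\begin{equation*}
f(\xi) - \sum_{j<N} f_{m-j}(\xi) \in \tS^{m-N}(\Rn,\cA^\infty).
\end{equation*}

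For (i), the inner expansion $f^{(\ell)}(\xi)\sim\sum_{k\geq 0} f^{(\ell)}_{m-\ell-k}(\xi)$ identifies $f^{(\ell)}_{m-j}$ (for $\ell\leq j$) as the $k=j-\ell\geq 0$ term, which is by hypothesis smooth on $\Rn$ and homogeneous of degree $(m-\ell)-(j-\ell)=m-j$ in the sense specified in the paper. A finite sum of such maps is again smooth and homogeneous of degree $m-j$ outside the unit ball, so (i) is immediate.

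For (ii), fix $N\geq 1$ and split
\begin{equation*}
f - \sum_{j<N} f_{m-j} = \bigg( f - \sum_{\ell<N} f^{(\ell)} \bigg) + \sum_{\ell<N} \bigg( f^{(\ell)} - \sum_{k=0}^{N-\ell-1} f^{(\ell)}_{m-\ell-k} \bigg) + R ,
\end{equation*}
with the bookkeeping remainder
\begin{equation*}
R := \sum_{\ell<N} \sum_{k=0}^{N-\ell-1} f^{(\ell)}_{m-\ell-k} - \sum_{j<N} f_{m-j} .
\end{equation*}
The first big parenthesis lies in $\tS^{m-N}$ by the hypothesis $f\sim\sum_\ell f^{(\ell)}$; each summand of the second lies in $\tS^{(m-\ell)-(N-\ell)}=\tS^{m-N}$ by the classical expansion of $f^{(\ell)}$ truncated at depth $N-\ell$, so their finite sum also lies in $\tS^{m-N}$. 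Reindexing the double sum by $j:=\ell+k$, which gives $0\leq\ell\leq j<N$, yields
\begin{equation*}
\sum_{\ell<N} \sum_{k=0}^{N-\ell-1} f^{(\ell)}_{m-\ell-k} = \sum_{j=0}^{N-1} \sum_{\ell=0}^{j} f^{(\ell)}_{m-j} = \sum_{j<N} f_{m-j} ,
\end{equation*}
so $R\equiv 0$. Combining the three pieces yields (ii).

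I do not expect a serious obstacle: the statement is essentially a reindexing of a layered asymptotic expansion into its diagonal homogeneous components. The only point requiring mild care is choosing the truncation depth of the inner classical expansions (namely $k < N-\ell$ for each outer index $\ell<N$) so that every partial remainder lands in the common class $\tS^{m-N}$; the diagonal identity $\sum_{\ell<N}\sum_{k<N-\ell} = \sum_{j<N}\sum_{\ell\leq j}$ then makes the bookkeeping remainder $R$ vanish identically and collapses the two expansions into one another.
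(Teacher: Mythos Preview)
Your argument is correct: both the homogeneity check and the splitting with the reindexing $j=\ell+k$ are clean, and the observation that the bookkeeping remainder $R$ vanishes is exactly right. Note that the paper does not supply its own proof of this proposition---it simply cites it from~\cite[Rem.~3.23]{HLP:Part1}---so there is nothing to compare against in the paper itself; your direct verification is the standard way to establish the result.
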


\begin{proposition}[{\cite[Prop.\ 3.24]{HLP:Part1}}] \label{prop:PsiDOs.classical-symbol-product}
Let $f(\xi)\in\tCS^m(\Rn,\cA^\infty)$ with $f(\xi)\sim\sum_{p\geq 0}f_{m-p}(\xi)$ and $g(\xi)\in\tCS^{m'}(\Rn,\cA^\infty)$ with $g(\xi)\sim\sum_{r\geq 0}g_{m'-r}(\xi)$. Then $f(\xi)g(\xi)$ belongs to $\tCS^{m+m'}(\Rn,\cA^\infty)$, and we have $f(\xi)g(\xi)\sim\sum_{j\geq 0} (fg)_{m+m'-j}(\xi)$, where $(fg)_{m+m'-j}(\xi)$, $j\geq 0$, is the homogeneous symbol of degree $m+m'-j$ given by
\begin{equation*}
(fg)_{m+m'-j}(\xi) = \sum_{p+r=j}f_{m-p}(\xi)g_{m'-r}(\xi) .
\end{equation*}
\end{proposition}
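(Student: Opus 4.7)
The plan is to verify the three pieces required by Definition~\ref{def.PsiDOs.classical-symbols}: that $fg \in \tS^{m+m'}(\Rn,\cA^\infty)$, that each candidate $(fg)_{m+m'-j}$ is smooth and homogeneous of the correct degree, and that the partial sums differ from $fg$ by a symbol of increasingly low order in the sense of~(\ref{eq:PsiDOs.asymptotic-expansion-definition}).

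For the first point, I would simply invoke Lemma~\ref{lem:PsiDOs.pointwise-product-continuity}. For the second, I would note that a pointwise product of two smooth homogeneous maps, one of degree $m-p$ and one of degree $m'-r$, is smooth and homogeneous of degree $m+m'-p-r$; hence $(fg)_{m+m'-j}$, being a finite sum of such products with $p+r = j$, is smooth and homogeneous of degree $m+m'-j$.

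The main step is the third. Fix $N \geq 1$ and use the classical expansions of $f$ and $g$ to write $f = \sum_{p<N} f_{m-p} + F_N$ and $g = \sum_{r<N} g_{m'-r} + G_N$ with remainders $F_N \in \tS^{m-N}(\Rn,\cA^\infty)$ and $G_N \in \tS^{m'-N}(\Rn,\cA^\infty)$. Multiplying out yields a ``main'' double sum $\sum_{p,r<N} f_{m-p} g_{m'-r}$ together with three ``cross'' terms, each of which involves at least one remainder factor and hence belongs to $\tS^{m+m'-N}(\Rn,\cA^\infty)$ by Lemma~\ref{lem:PsiDOs.pointwise-product-continuity}. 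The main double sum is then split along the diagonals $p+r = j$: the part with $p+r < N$ collects exactly into $\sum_{j<N}(fg)_{m+m'-j}$, while each term with $p+r \geq N$ is smooth and homogeneous of degree $m+m'-p-r \leq m+m'-N$ and therefore lies in $\tS^{m+m'-N}(\Rn,\cA^\infty)$ by the observation immediately following the definition of homogeneity. Combining these remarks gives $fg - \sum_{j<N}(fg)_{m+m'-j} \in \tS^{m+m'-N}(\Rn,\cA^\infty)$ for every $N$, which is precisely the asymptotic expansion required.

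I do not anticipate a genuine obstacle here. The argument is pure symbol-class bookkeeping: no twisting, no oscillatory integrals, and no use of the skew form $B$ enters, because the product is evaluated pointwise in $\xi$ using only the multiplication in $\cA^\infty$. The only point worth attention is the treatment of the ``leftover'' homogeneous products $f_{m-p}g_{m'-r}$ with $p+r \geq N$ and both $p,r<N$, which are not among the first $N$ homogeneous components of the expansion of $fg$ but are automatically of sufficiently low order to be absorbed into the remainder.
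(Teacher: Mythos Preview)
Your proposal is correct. The paper does not supply its own proof of this proposition (it is quoted from~\cite[Prop.\ 3.24]{HLP:Part1}), but it does prove the weakly parametric analogue, Proposition~\ref{prop:weakly-polyhomogeneous-symbol-product}, and your argument is line-for-line the same as that proof specialised to the parameter-free case: write $f$ and $g$ as partial sums plus remainders in $\tS^{m-N}$ and $\tS^{m'-N}$, multiply out, absorb the cross terms and the off-diagonal products $f_{m-p}g_{m'-r}$ with $p+r\geq N$ into $\tS^{m+m'-N}$, and identify the diagonal sums as the homogeneous components.
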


Now we are in a position to compute the asymptotic expansion of the composition product~(\ref{eq:PsiDOs.composition-formula}) of two classical symbols.
\begin{theorem} \label{thm:PsiDOs.composition-formula-classical-symbols}
Let $f(\xi)\in\tCS^m(\Rn,\cA^\infty)$ with $f(\xi)\sim\sum_{j\geq 0}f_{m-j}(\xi)$ and $g(\xi)\in\tCS^{m'}(\Rn,\cA^\infty)$ with $g(\xi)\sim\sum_{j\geq 0}g_{m'-j}(\xi)$. Then the product $f\sharp g$ given by~\textup{(\ref{eq:PsiDOs.composition-formula})} is in $\tCS^{m+m'}(\Rn,\cA^\infty)$. Furthermore, $f\sharp g$ has the asymptotic expansion $f\sharp g(\xi)\sim\sum_{j\geq 0}(f\sharp g)_{m+m'-j}(\xi)$, where
\begin{equation} \label{eq:PsiDOs.classical-symbol-product-homogeneous-parts}
(f\sharp g)_{m+m'-j}(\xi) = \sum_{k+l+|\alpha| = j} \frac{(-i)^{|\alpha|}}{\alpha !} (\partial_\xi^\alpha f_{m-k})(\xi) g_{m'-l}^{B,\alpha}(\xi) , \qquad j\geq 0 .
\end{equation}
Here $g_{m'-l}^{B,\alpha}(\xi)$ is given as in~\textup{(\ref{eq:PsiDOs.twisted-partial-derivative-homogeneous-parts})}. In particular, we have
\begin{equation} \label{eq:PsiDOs.classical-symbol-product-principal-part}
(f\sharp g)_{m+m'}(\xi) = f_m(\xi) g_{m'}(\xi) .
\end{equation}
\end{theorem}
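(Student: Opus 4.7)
The plan is to start from the asymptotic expansion of $f\sharp g$ furnished by Theorem~\ref{thm:PsiDOs.composition-formula}(iii) and reorganize it into a sum of classical symbols of strictly decreasing order, so that Proposition~\ref{prop:PsiDOs.expanding-symbol-by-classical-symbols} applies. Concretely, I would first rewrite (\ref{eq:PsiDOs.composition-asymptotic-expansion}) using the notation $g^{B,\alpha}(\xi):=\partial_x^\alpha|_{x=0}\big(\alpha_{-x}(g(\xi+Bx))\big)$ from (\ref{eq:PsiDOs.twisted-partial-derivative-computation}), so that
\begin{equation*}
f\sharp g(\xi)\sim\sum_{\alpha}\frac{(-i)^{|\alpha|}}{\alpha!}(\partial_\xi^\alpha f)(\xi)\,g^{B,\alpha}(\xi).
\end{equation*}

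Next, I would verify that each summand is classical of the expected order. By Remark~\ref{rem:PsiDOs.classical-symbol-partial-derivative}, $\partial_\xi^\alpha f\in\tCS^{m-|\alpha|}(\Rn,\cA^\infty)$ with $\partial_\xi^\alpha f\sim\sum_{k\geq 0}\partial_\xi^\alpha f_{m-k}$; by Lemma~\ref{lem:PsiDOs.twisted-partial-derivative-of-classical-symbol}, $g^{B,\alpha}\in\tCS^{m'}(\Rn,\cA^\infty)$ with $g^{B,\alpha}\sim\sum_{l\geq 0}g^{B,\alpha}_{m'-l}$; and by Proposition~\ref{prop:PsiDOs.classical-symbol-product}, the product $(\partial_\xi^\alpha f)\,g^{B,\alpha}$ lies in $\tCS^{m+m'-|\alpha|}(\Rn,\cA^\infty)$ with homogeneous components
\begin{equation*}
\big[(\partial_\xi^\alpha f)\,g^{B,\alpha}\big]_{m+m'-|\alpha|-p}(\xi)=\sum_{k+l=p}(\partial_\xi^\alpha f_{m-k})(\xi)\,g^{B,\alpha}_{m'-l}(\xi).
\end{equation*}

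Then I would group the terms according to $\ell:=|\alpha|$, setting
\begin{equation*}
f^{(\ell)}(\xi):=\sum_{|\alpha|=\ell}\frac{(-i)^{|\alpha|}}{\alpha!}(\partial_\xi^\alpha f)(\xi)\,g^{B,\alpha}(\xi)\in\tCS^{m+m'-\ell}(\Rn,\cA^\infty),
\end{equation*}
which is a finite sum of classical symbols of the same order, hence classical. The resulting decomposition $f\sharp g\sim\sum_{\ell\geq 0}f^{(\ell)}$ is exactly of the form required by Proposition~\ref{prop:PsiDOs.expanding-symbol-by-classical-symbols}, which yields $f\sharp g\in\tCS^{m+m'}(\Rn,\cA^\infty)$ and gives the homogeneous components as $(f\sharp g)_{m+m'-j}(\xi)=\sum_{\ell\leq j}f^{(\ell)}_{m+m'-j}(\xi)$. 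Unwinding the definitions of $f^{(\ell)}$ and of its homogeneous pieces, each summand corresponds to a triple $(k,l,\alpha)$ with $k+l+|\alpha|=j$, so the sum assembles precisely into formula (\ref{eq:PsiDOs.classical-symbol-product-homogeneous-parts}). The principal part (\ref{eq:PsiDOs.classical-symbol-product-principal-part}) then falls out from $j=0$, which forces $k=l=|\alpha|=0$, together with $g^{B,0}(\xi)=g(\xi)$ and therefore $g^{B,0}_{m'}(\xi)=g_{m'}(\xi)$.

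The substance of the proof is already encoded in the earlier structural results (Theorem~\ref{thm:PsiDOs.composition-formula}, Lemma~\ref{lem:PsiDOs.twisted-partial-derivative-of-classical-symbol}, Propositions~\ref{prop:PsiDOs.expanding-symbol-by-classical-symbols}--\ref{prop:PsiDOs.classical-symbol-product}); the only thing to guard against is the bookkeeping step of regrouping the sum over multi-indices $\alpha$ by $|\alpha|$. This is harmless because for each $\ell$ only finitely many $\alpha$ contribute, and the three layers of asymptotic expansion (in $\ell$ from the composition formula, in $p$ from Proposition~\ref{prop:PsiDOs.classical-symbol-product}, and the homogeneous indexing $j=\ell+p$) can be consolidated by Proposition~\ref{prop:PsiDOs.expanding-symbol-by-classical-symbols}. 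I expect no analytic obstacle beyond this reindexing; the main care is simply tracking that the combined index range $k+l+|\alpha|=j$ is exhaustive and non-redundant.
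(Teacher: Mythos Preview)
Your proposal is correct and follows essentially the same route as the paper: both start from the asymptotic expansion of Theorem~\ref{thm:PsiDOs.composition-formula}, invoke Remark~\ref{rem:PsiDOs.classical-symbol-partial-derivative} and Lemma~\ref{lem:PsiDOs.twisted-partial-derivative-of-classical-symbol} to identify the factors as classical, apply Proposition~\ref{prop:PsiDOs.classical-symbol-product} to each product, and finish with Proposition~\ref{prop:PsiDOs.expanding-symbol-by-classical-symbols}. Your explicit regrouping by $\ell=|\alpha|$ is a harmless organizational detail that makes the invocation of Proposition~\ref{prop:PsiDOs.expanding-symbol-by-classical-symbols} slightly more transparent than in the paper, but the substance is identical.
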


\begin{proof}
We know by Theorem~\ref{thm:PsiDOs.composition-formula} that the symbol $f\sharp g$ is in $\tS^{m+m'}(\Rn,\cA^\infty)$. Thus, it remains to prove that $f\sharp g$ is a classical symbol and its homogeneous parts are given by~(\ref{eq:PsiDOs.classical-symbol-product-homogeneous-parts}) and~(\ref{eq:PsiDOs.classical-symbol-product-principal-part}).

We know by Remark~\ref{rem:PsiDOs.classical-symbol-partial-derivative} and Lemma~\ref{lem:PsiDOs.twisted-partial-derivative-of-classical-symbol} that, for all $\alpha\in\N_0^n$, we have
\begin{gather*}
\partial_\xi^\alpha f(\xi)\sim\sum_{j\geq 0}\partial_\xi^\alpha f_{m-j}(\xi) \\
g^{B,\alpha}(\xi) := \partial_x^\alpha\big|_{x=0} \Big( \alpha_{-x} \big( g(\xi+Bx) \big) \Big) \sim\sum_{j\geq 0} g_{m'-j}^{B,\alpha}(\xi) ,
\end{gather*}
where $g_{m'-j}^{B,\alpha}(\xi)$ is given as in~(\ref{eq:PsiDOs.twisted-partial-derivative-homogeneous-parts}). It then follows from Proposition~\ref{prop:PsiDOs.classical-symbol-product} that
\begin{equation*}
\partial_\xi^\alpha f(\xi) \partial_x^\alpha\big|_{x=0} \Big( \alpha_{-x} \big( g(\xi+Bx) \big) \Big) \sim\sum_{j\geq 0}\sum_{k+l=j} \partial_\xi^\alpha f_{m-k}(\xi) g_{m'-l}^{B,\alpha}(\xi) .
\end{equation*}
Combining this with Proposition~\ref{prop:PsiDOs.expanding-symbol-by-classical-symbols} we obtain
\begin{equation*}
f\sharp g(\xi)\sim\sum_{j\geq 0}(f\sharp g)_{m+m'-j}(\xi) ,
\end{equation*}
where $(f\sharp g)_{m+m'-j}(\xi)$ is given by~(\ref{eq:PsiDOs.classical-symbol-product-homogeneous-parts}). In particular, we see that $(f\sharp g)_{m+m'}(\xi) = f_m(\xi) g_{m'}(\xi)$, and hence we get~(\ref{eq:PsiDOs.classical-symbol-product-principal-part}). The proof is complete.
\end{proof}

\begin{remark}
As we know by~(\ref{eq:PsiDOs.twisted-partial-derivative-homogeneous-parts}) that $g_{m'-j}^{B,0}(\xi) = g_{m'-j}(\xi)$ and $g_{m'-1}^{B,\alpha}(\xi) = i\delta^\alpha g_{m'-1}(\xi)$ for $|\alpha| = 1$, we see that, comparing with the corresponding term in the composition formula in the untwisted case, the term $(f\sharp g)_{m+m'-1}(\xi)$ remains unchanged.
\end{remark}

We close this section with the introduction of the trace on the algebra of pseudodifferential multipliers of order $<-n$. Let $\psi$ be an $\alpha$-invariant trace on $\cA$. Then, using the dual trace $\widehat{\psi}$ given in~(\ref{eq:PsiDOs.dual-trace-definition}), for $f\in\tS^m(\Rn,\cA^\infty)$, $m<-n$, we set
\begin{equation} \label{eq:PsiDOs.natural-trace-on-PsiDOs}
\Tr_\psi(P_f) := \widehat{\psi}\left( f^\vee \right) = \int_{\Rn} \psi\big( f(\xi) \big) \dbar\xi .
\end{equation}
This gives rise to a trace on $\bigcup_{m<-n}\tL_\sigma^m(\Rn,\cA^\infty)$ (\cf\ \cite[Rem.\ 3.4]{LM:GAFA16}).

\section{Boundedness} \label{sec:boundedness}
In this section, we study the boundedness of pseudodifferential multipliers with respect to the Hilbert $C^*$-norm. To this end, we first generalize the classical Schur's test on the boundedness of integral operators (see, e.g., \cite[Thm.\ 5.2]{HS:Springer78}).

Let us denote the space of continuous maps on $\Rn$ vanishing at infinity (resp., Schwartz class maps on $\Rn$) with values in $\cA$ by $C_0(\Rn,\cA)$ (resp., $\cS(\Rn,\cA)$). Let $\cH(\Rn,\cA)$ be the Hilbert $C^*$-module over $\cA$ formed by completing $\cS(\Rn,\cA^\infty)$ with respect to the norm $\|\cdot\| := \|\acou \cdot \cdot\|^\frac{1}{2}$, where $\acou \cdot \cdot$ is the inner product~(\ref{eq:PsiDOs.Cstar-module-inner-product}). Note that $\cH(\Rn,\cA)$ is nothing but the exterior tensor product $L^2(\Rn)\otimes\cA$ (\cf\ \cite[Chap.\ 4]{La:CUP95}), where $\cA$ is viewed as a Hilbert module over itself.

\begin{lemma} \label{lem:boundedness.Schur-test-positive-case}
Let $k:\Rn\times\Rn\rightarrow\cA$ be a continuous map with $k(x,y)\geq 0$ for all $x,y\in\Rn$. Furthermore, assume that there are positive measurable functions $p,q:\Rn\rightarrow\R_+$ and numbers $\alpha$ and $\beta$ such that
\begin{gather}
\int_{\Rn} k(x,y) q(y) dy \leq \alpha p(x) , \qquad \text{for $x\in\Rn$} , \label{eq:boundedness.positive-Schur-test-estimate1} \\
\int_{\Rn} k(x,y) p(x) dx \leq \beta q(y) , \qquad \text{for $y\in\Rn$} . \label{eq:boundedness.positive-Schur-test-estimate2}
\end{gather}
Then the \emph{integral operator} $:K:\cS(\Rn,\cA)\rightarrow C_0(\Rn,\cA)$ defined by
\begin{equation} \label{eq:boundedness.integral-operator-def-positive-case}
Ku(x) = \int_{\Rn} k(x,y)u(y) dy , \qquad u\in\cS(\Rn,\cA) ,
\end{equation}
extends by continuity to a bounded adjointable module endomorphism of the Hilbert $\cA$-module $\cH(\Rn,\cA)$ with $\|K\|\leq\sqrt{\alpha\beta}$.
\end{lemma}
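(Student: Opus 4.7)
My plan is to mimic the classical scalar proof of Schur's test, replacing the scalar Cauchy--Schwarz inequality with its Hilbert $C^*$-module counterpart: for $\cA$-valued square-integrable $f,g$ on $\Rn$ one has $\langle f,g\rangle^*\langle f,g\rangle \leq \|\langle f,f\rangle\|\,\langle g,g\rangle$ in $\cA$. Two features of the hypotheses make the noncommutative translation essentially line-by-line: $k(x,y)\ge 0$, so for fixed $(x,y)$ it admits a self-adjoint square root $k(x,y)^{1/2}\in\cA$; and the Schur weights $p,q$ are real scalars that commute with every element of $\cA$.

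I start with the pointwise factorisation
\begin{equation*}
k(x,y)u(y) = \bigl[q(y)^{1/2}k(x,y)^{1/2}\bigr]^*\,\bigl[q(y)^{-1/2}k(x,y)^{1/2}u(y)\bigr],
\end{equation*}
valid because $k(x,y)^{1/2}$ is self-adjoint and $p,q$ are scalar. For fixed $x$, setting $f(y):=q(y)^{1/2}k(x,y)^{1/2}$ and $g(y):=q(y)^{-1/2}k(x,y)^{1/2}u(y)$, the definition of $K$ reads $(Ku)(x) = \int_{\Rn} f(y)^*g(y)\,dy$. The module Cauchy--Schwarz inequality then yields
\begin{equation*}
(Ku)(x)^*(Ku)(x) \leq \Bigl\|\int_{\Rn} q(y)k(x,y)\,dy\Bigr\|\cdot\int_{\Rn} q(y)^{-1}u(y)^*k(x,y)u(y)\,dy,
\end{equation*}
and the first factor is bounded by $\alpha p(x)$ thanks to~(\ref{eq:boundedness.positive-Schur-test-estimate1}).

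Integrating in $x$, using Fubini (justified by the Schwartz decay of $u$ together with the hypothesised integrability of $k$), and applying~(\ref{eq:boundedness.positive-Schur-test-estimate2}), together with the fact that positivity in $\cA$ is preserved by $a\mapsto u(y)^* a\,u(y)$ and that $q(y)$ is a positive scalar commuting with everything, yields
\begin{equation*}
\langle Ku,Ku\rangle \leq \alpha\int_{\Rn} q(y)^{-1}u(y)^*\Bigl[\int_{\Rn} p(x)k(x,y)\,dx\Bigr]u(y)\,dy \leq \alpha\beta\int_{\Rn} u(y)^*u(y)\,dy = \alpha\beta\,\langle u,u\rangle
\end{equation*}
as an inequality in $\cA$. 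Taking the $C^*$-norm gives $\|Ku\|^2\leq\alpha\beta\|u\|^2$, and since $\cS(\Rn,\cA^\infty)$ is dense in $\cH(\Rn,\cA)$ this bound extends $K$ by continuity to a bounded module endomorphism with $\|K\|\leq\sqrt{\alpha\beta}$. For adjointability, a direct Fubini computation shows $\langle v,Ku\rangle = \langle \tilde K v,u\rangle$ for all $u,v\in\cS(\Rn,\cA)$, where $\tilde K$ is the integral operator with kernel $\tilde k(x,y):=k(y,x)$; this kernel is again continuous and positive and satisfies the Schur hypotheses with $(p,q,\alpha,\beta)$ replaced by $(q,p,\beta,\alpha)$, so the same argument provides a bounded extension of $\tilde K$, which is the required adjoint of $K$.

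The only real subtlety I anticipate is making sure no manipulation secretly relies on two $\cA$-valued objects commuting. This is guaranteed by the two observations above---self-adjointness of $k(x,y)^{1/2}$ and the scalar nature of $p,q$---and it is precisely the point at which the proof would collapse if, say, the Schur weights were themselves allowed to be $\cA$-valued. In the present setting everything goes through cleanly, so I do not expect any genuinely hard step.
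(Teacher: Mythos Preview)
Your proof is correct and follows essentially the same route as the paper's: both factor $k(x,y)u(y)$ via the positive square root $k(x,y)^{1/2}$ and the scalar weights $q(y)^{\pm 1/2}$, apply the Hilbert $C^*$-module Cauchy--Schwarz inequality pointwise in $x$, then integrate and use Fubini together with the conjugation monotonicity $c^*ac\le c^*bc$ to invoke the second Schur estimate. The paper additionally flags explicitly why one must first work with $u\in\cS(\Rn,\cA)$ (namely that $\int\|u(x)\|^2\,dx$ need not be finite for general $u\in\cH(\Rn,\cA)$), but your extension-by-density argument covers this in practice.
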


\begin{proof}
The proof is essentially the same as in~\cite[Thm.\ 5.2]{HS:Springer78}. However, we recall and emphasize that for an element $u$ of $\cH(\Rn,\cA)$, although the integral $\int u(x)^* u(x) dx$ converges in $\cA$, in general $\int \|u(x)\|^2 dx$ is not necessarily finite. We thus prove the boundedness in the case $u\in\cS(\Rn,\cA)$ first and then extend the map to $\cH(\Rn,\cA)$ by continuity.

Given $u\in\cS(\Rn,\cA)$ we note that
\begin{equation} \label{eq:trick-square-root-of-the-kernel}
Ku(x) = \int_{\Rn} k(x,y)^{\frac{1}{2}}\sqrt{q(y)}k(x,y)^{\frac{1}{2}}\frac{u(y)}{\sqrt{q(y)}} dy = \acou {k(x,\cdot)^{\frac{1}{2}}\sqrt{q(\cdot)}} {k(x,\cdot)^{\frac{1}{2}}\frac{u(\cdot)}{\sqrt{q(\cdot)}}} . 
\end{equation}
Hence the Cauchy-Schwarz inequality for Hilbert $C^*$-modules (see, e.g., \cite[Prop.\ 1.1]{La:CUP95}) and~(\ref{eq:boundedness.positive-Schur-test-estimate1}) gives
\begin{multline*}
{\acou {k(x,\cdot)^{\frac{1}{2}}\sqrt{q(\cdot)}} {k(x,\cdot)^{\frac{1}{2}}\frac{u(\cdot)}{\sqrt{q(\cdot)}}}}^* \acou {k(x,\cdot)^{\frac{1}{2}}\sqrt{q(\cdot)}} {k(x,\cdot)^{\frac{1}{2}}\frac{u(\cdot)}{\sqrt{q(\cdot)}}} \\
\leq \left\| \int_{\Rn} k(x,y)q(y) dy \right\| \int_{\Rn} u(y)^*\frac{k(x,y)}{q(y)}u(y) dy \leq \alpha \int_{\Rn} u(y)^*\frac{p(x)k(x,y)}{q(y)}u(y) dy .
\end{multline*}
Combining this with~(\ref{eq:trick-square-root-of-the-kernel}) we obtain
\begin{align}
\nonumber \acou {Ku} {Ku} &= \int_{\Rn} {\acou {k(x,\cdot)^{\frac{1}{2}}\sqrt{q(\cdot)}} {k(x,\cdot)^{\frac{1}{2}}\frac{u(\cdot)}{\sqrt{q(\cdot)}}}}^* \acou {k(x,\cdot)^{\frac{1}{2}}\sqrt{q(\cdot)}} {k(x,\cdot)^{\frac{1}{2}}\frac{u(\cdot)}{\sqrt{q(\cdot)}}} dx \\
&\leq \alpha \int_{\Rn} \bigg( \int_{\Rn} u(y)^*\frac{p(x)k(x,y)}{q(y)}u(y) dy \bigg) dx . \label{eq:boundedness.Ku-estimate-before-Fubini}
\end{align}
Recall that the inequality $c^*ac \leq c^*bc$ holds for all $c\in\cA$ and self-adjoint elements $a$ and $b$ in $\cA$ such that $a\leq b$ (\cf\ \cite[Thm.\ 2.2.5]{Mu:AP90}). By combining this with the assumptions~(\ref{eq:boundedness.positive-Schur-test-estimate2}) and $u\in\cS(\Rn,\cA)$ we get
\begin{align*}
\int_{\Rn} \bigg( \int_{\Rn} u(y)^* \frac{p(x)k(x,y)}{q(y)} u(y) dx \bigg) dy &= \int_{\Rn} u(y)^* \int_{\Rn} \frac{p(x)k(x,y)}{q(y)} dx \, u(y)dy \\
&\leq \beta \int_{\Rn} u(y)^* u(y) dy \\
&\leq \beta \|u\|_{\cH(\Rn,\cA)}^2 < \infty .
\end{align*}
It follows from this that the integrand $u(y)^*p(x)k(x,y)q(y)^{-1}u(y)$ on the right-hand side of~(\ref{eq:boundedness.Ku-estimate-before-Fubini}) is integrable with respect to the product measure $dxdy$ and hence Fubini's theorem for the integration of vector-valued maps applies (\cf\ e.g., \cite[Prop.\ B.21]{HLP:Part1}). Thus, we get
\begin{align*}
\acou {Ku} {Ku} &\leq \alpha \int_{\Rn} \bigg( \int_{\Rn} u(y)^*\frac{p(x)k(x,y)}{q(y)}u(y) dx \bigg) dy \\
&\leq \alpha\beta \int_{\Rn} u(y)^* u(y) dy \\
&= \alpha\beta \acou u u .
\end{align*}
Furthermore, we have
\begin{equation*}
\|Ku\| = \| \acou {Ku} {Ku} \|^{\frac{1}{2}} \leq \sqrt{\alpha\beta} \| \acou u u \|^{\frac{1}{2}} = \sqrt{\alpha\beta} \|u\| .
\end{equation*}
This shows that the map $K:\cS(\Rn,\cA)\rightarrow C_0(\Rn,\cA)$ extends to a bounded linear map from $\cH(\Rn,\cA)$ to itself with $\|K\|\leq\sqrt{\alpha\beta}$. Furthermore, it is clear that $K$ is right $\cA$-linear and adjointable, whose adjoint is the continuous extension to $\cH(\Rn,\cA)$ of the integral operator associated with the kernel $\Rn\times\Rn\ni(x,y)\mapsto k(y,x)\in\cA$. This proves the lemma.
\end{proof}

\begin{remark}
We need an extension of Lemma~\ref{lem:boundedness.Schur-test-positive-case} to not necessarily non-negative continuous kernels $k:\Rn\times\Rn\rightarrow\cA$. One might be tempted to hope for a condition of the kind,
\begin{gather*}
\int_{\Rn} |k(x,y)|q(y) dy \leq \alpha p(x) , \qquad \text{for $x\in\Rn$} , \\
\int_{\Rn} |k(x,y)|p(x) dx \leq \beta q(y) , \qquad \text{for $y\in\Rn$} .
\end{gather*}
We leave it as an open problem whether such a version of Schur's test is valid for the Hilbert $C^*$-module $\cH(\Rn,\cA)$. The next weaker result, Lemma~\ref{lem:boundedness.Schur-test}, will suffice for our purposes.
\end{remark}

\begin{remark}
Needless to say in the previous and the next lemma we could replace the pair $(\Rn,dx)$ by any $\sigma$-finite measure space $(X,\mu)$ and the Hilbert module $\cH(\Rn,\cA)$ by $\cH(X,\cA,\mu) := L^2(X,\mu)\otimes\cA$.
\end{remark}

\begin{lemma} \label{lem:boundedness.Schur-test}
Let $k:\Rn\times\Rn\rightarrow\cA$ be continuous and suppose that there are positive measurable functions $p,q:\Rn\rightarrow\R_+$ and numbers $\alpha$ and $\beta$ such that
\begin{gather*}
\int_{\Rn} \|k(x,y)\|q(y) dy \leq \alpha p(x) , \qquad \text{for $x\in\Rn$} , \\
\int_{\Rn} \|k(x,y)\|p(x) dx \leq \beta q(y) , \qquad \text{for $y\in\Rn$} .
\end{gather*}
Then $Kf(x) = \int k(x,y)u(y) dy$ extends by continuity to a bounded adjointable $\cA$-module endomorphism of $\cH(\Rn,\cA)$ with $K^*f(x) = \int k(y,x)^* u(y) dy$ and $\|K\|\leq 4\sqrt{\alpha\beta}$.
\end{lemma}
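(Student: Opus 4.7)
The plan is to reduce to the non-negative-kernel case of Lemma~\ref{lem:boundedness.Schur-test-positive-case} by decomposing $k$ as a linear combination of four continuous $\cA_+$-valued kernels, each pointwise dominated in norm by $\|k(x,y)\|$. First I split $k = h_1 + i h_2$ into its self-adjoint real and imaginary parts $h_1 = (k+k^*)/2$, $h_2 = (k-k^*)/(2i)$, both continuous with $\|h_j(x,y)\| \leq \|k(x,y)\|$. Then, using the Jordan decomposition from continuous functional calculus, I write $h_j(x,y) = h_j^+(x,y) - h_j^-(x,y)$ with $h_j^\pm := \tfrac12(|h_j| \pm h_j) \geq 0$ and $\|h_j^\pm(x,y)\| \leq \|h_j(x,y)\|$. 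Relabelling $k_1,k_2,k_3,k_4 := h_1^+,h_1^-,h_2^+,h_2^-$, I obtain
\begin{equation*}
k(x,y) = k_1(x,y) - k_2(x,y) + i\bigl( k_3(x,y) - k_4(x,y) \bigr), \qquad 0 \leq k_\ell(x,y) \leq \|k(x,y)\|\cdot 1_{\cA} .
\end{equation*}

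This pointwise operator inequality converts the hypothesis on $\|k(x,y)\|$ directly into the hypotheses of Lemma~\ref{lem:boundedness.Schur-test-positive-case} for each $k_\ell$ with the same $p,q,\alpha,\beta$: indeed $\int k_\ell(x,y)q(y)\,dy \leq \int \|k(x,y)\|q(y)\,dy\cdot 1_{\cA} \leq \alpha p(x)\cdot 1_{\cA}$, and symmetrically in the other variable. That lemma then produces bounded adjointable module endomorphisms $K_\ell$ of $\cH(\Rn,\cA)$ with $\|K_\ell\|\leq \sqrt{\alpha\beta}$, acting on $\cS(\Rn,\cA)$ as integration against $k_\ell$. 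Setting $K := K_1 - K_2 + i(K_3-K_4)$ gives, by the triangle inequality, a bounded adjointable extension of the integral operator with kernel $k$ that satisfies $\|K\|\leq 4\sqrt{\alpha\beta}$. Since each $k_\ell \geq 0$ is self-adjoint valued, the adjoint of $K_\ell$ furnished by Lemma~\ref{lem:boundedness.Schur-test-positive-case} is the integral operator with kernel $k_\ell(y,x)$, whence
\begin{equation*}
K^* = K_1^* - K_2^* - i(K_3^* - K_4^*)
\end{equation*}
has kernel $\bigl( k_1 - k_2 - i(k_3 - k_4)\bigr)(y,x) = k(y,x)^*$, as asserted.

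The main technical point to verify is the joint continuity in $(x,y)$ of the four positive kernels $k_\ell$. This reduces to the norm-continuity of the absolute-value map $\cA_{\textup{sa}}\ni a\mapsto |a| = (a^2)^{1/2}\in\cA_+$, which follows from continuous functional calculus applied to the continuous function $t\mapsto |t|$. All remaining steps---checking the Schur inequalities for the $k_\ell$'s and assembling the four bounded pieces into $K$ and $K^*$---are routine applications of Lemma~\ref{lem:boundedness.Schur-test-positive-case}.
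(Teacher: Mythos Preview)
Your proof is correct and follows essentially the same route as the paper: split $k$ into self-adjoint real and imaginary parts, apply the Jordan decomposition $h_j^\pm=\tfrac12(|h_j|\pm h_j)$ to obtain four non-negative continuous kernels each dominated in norm by $\|k(x,y)\|$, invoke Lemma~\ref{lem:boundedness.Schur-test-positive-case} on each piece, and reassemble. Your explicit remark on the norm-continuity of $a\mapsto|a|$ (needed for the $k_\ell$ to be continuous) is a point the paper passes over in silence, so your write-up is in fact slightly more careful.
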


\begin{proof}
Let $|k|,k^*:\Rn\times\Rn\rightarrow\cA$ denote the continuous maps defined by $|k|(x,y) = |k(x,y)|$ and $k^*(x,y) = k(x,y)^*$, $x,y\in\Rn$. We also set
\begin{equation*}
\Re{k} = \frac{1}{2}(k+k^*) \quad \text{and} \quad \Im{k} = \frac{1}{2i}(k-k^*) .
\end{equation*}
Then we may write $k = k_1-k_2+i(k_3-k_4)$, where $k_j:\Rn\times\Rn\rightarrow\cA$, $1\leq j\leq 4$, are the continuous maps defined by
\begin{equation*}
k_1 = \frac{1}{2}(|\Re{k}|+\Re{k}) , \quad k_2 = \frac{1}{2}(|\Re{k}|-\Re{k}) , \quad k_3 = \frac{1}{2}(|\Im{k}|+\Im{k}) , \quad k_4 = \frac{1}{2}(|\Im{k}|-\Im{k}) .
\end{equation*}
Note that $k_j(x,y)\geq 0$ for all $x,y\in\Rn$. Note also that any of the $k_j$ satisfies the assumptions~(\ref{eq:boundedness.positive-Schur-test-estimate1})--(\ref{eq:boundedness.positive-Schur-test-estimate2}) of Lemma~\ref{lem:boundedness.Schur-test-positive-case} as we have $\|k_j(x,y)\|\leq \|k(x,y)\|$ and hence
\begin{equation*}
\int_{\Rn} k_j(x,y)q(y) dy \leq \int_{\Rn} \|k_j(x,y)\|q(y) dy \leq \int_{\Rn} \|k(x,y)\| q(y) dy \leq \alpha p(x) \qquad \forall x\in\Rn ,
\end{equation*}
and similarly for $\int k_j(x,y) p(x) dx$. Thus, Lemma~\ref{lem:boundedness.Schur-test-positive-case} implies that
\begin{equation*}
\|K\| \leq \sum_{j=1}^4 \|K_j\| \leq 4\sqrt{\alpha\beta} ,
\end{equation*}
where $K_j$ denotes the respective integral operator associated with $k_j$ in the sense of~(\ref{eq:boundedness.integral-operator-def-positive-case}), $1\leq j\leq 4$. Therefore, $K$ extends to a bounded $\cA$-module endomorphism by continuity, and its adjoint $K^*$ is the continuous extension of the integral operator associated with the kernel $\Rn\times\Rn\ni(x,y)\mapsto k(y,x)^*$. The proof is complete.
\end{proof}

\begin{lemma} \label{lem:boundedness.hol-func-cal-of-symbols}
Let $f\in\tS^0(\Rn,\cA^\infty)$. Suppose that $f(\xi)$ is a normal element in $\cA$ for all $\xi\in\Rn$ and there are $a,b\in\R$ with $0<a<b$ such that $\Sp(f(\xi))\subset[a,b]$ for all $\xi\in\Rn$. Let $\Omega$ be an open subset of $\C$ containing $[a,b]$ and $\varphi$ be a holomorphic function on $\Omega$. Then the map $\xi\mapsto\varphi(f(\xi))$ belongs to $\tS^0(\Rn,\cA^\infty)$, where $\varphi(f(\xi))$, $\xi\in\Rn$, is defined by using holomorphic functional calculus.
\end{lemma}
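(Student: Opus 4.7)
The plan is to use the Cauchy representation from the holomorphic functional calculus and then show that the resolvent family $\xi\mapsto(z-f(\xi))^{-1}$ is a symbol of order $0$ with seminorms bounded uniformly in $z$ on a suitable contour. Since $\Sp(f(\xi))\subset[a,b]$ uniformly in $\xi$, I would choose a positively oriented, piecewise-smooth, simple closed contour $\gamma\subset\Omega\setminus[a,b]$ that encloses $[a,b]$ once and keeps a positive distance $d>0$ from $[a,b]$. Normality of $f(\xi)$ then gives the uniform bound $\|(z-f(\xi))^{-1}\|=\dist(z,\Sp(f(\xi)))^{-1}\leq d^{-1}$ for all $z\in\gamma$ and $\xi\in\Rn$, so that the defining identity
$$\varphi(f(\xi))=\frac{1}{2\pi i}\oint_\gamma\varphi(z)(z-f(\xi))^{-1}\,dz$$
reduces the problem to establishing estimates of the form $\|\delta^\alpha\partial_\xi^\beta(z-f(\xi))^{-1}\|\leq C_{\alpha\beta}\brak{\xi}^{-|\beta|}$, uniform in $z\in\gamma$.

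To verify this I would first recall that the algebra $\cA^\infty$ of smooth vectors for a strongly continuous action by $*$-automorphisms is a spectral invariant Fr\'echet $*$-subalgebra of $\cA$, so $(z-f(\xi))^{-1}$ indeed belongs to $\cA^\infty$. Applying the standard identity $\delta_k(a^{-1})=-a^{-1}(\delta_k a)a^{-1}$, and its analogue for $\partial_{\xi_j}$, to $a=z-f(\xi)$ yields the basic Leibniz-type relations
$$\partial_{\xi_j}(z-f(\xi))^{-1}=(z-f(\xi))^{-1}(\partial_{\xi_j}f(\xi))(z-f(\xi))^{-1},$$
and likewise with $\partial_{\xi_j}$ replaced by $\delta_k$. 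Iterating these by induction on $|\alpha|+|\beta|$ expresses $\delta^\alpha\partial_\xi^\beta(z-f(\xi))^{-1}$ as a finite sum, with coefficients independent of $z$ and $\xi$, of products of the form
$$R\,(\delta^{\alpha_1}\partial_\xi^{\beta_1}f(\xi))\,R\,(\delta^{\alpha_2}\partial_\xi^{\beta_2}f(\xi))\,R\cdots(\delta^{\alpha_k}\partial_\xi^{\beta_k}f(\xi))\,R,$$
with $R=(z-f(\xi))^{-1}$, $\sum_i\alpha_i=\alpha$, $\sum_i\beta_i=\beta$ and $1\leq k\leq|\alpha|+|\beta|$. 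Each factor $R$ contributes at most $d^{-1}$, while the symbol estimates for $f\in\tS^0(\Rn,\cA^\infty)$ give $\|\delta^{\alpha_i}\partial_\xi^{\beta_i}f(\xi)\|\leq C_{\alpha_i\beta_i}\brak{\xi}^{-|\beta_i|}$. Multiplying these bounds and using $\sum_i|\beta_i|=|\beta|$ produces the desired uniform symbol estimate for the resolvent family.

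Finally I would interchange the $\delta^\alpha\partial_\xi^\beta$ derivatives with the contour integral. The integrand is jointly continuous into $\cA^\infty$, and its derivatives are dominated uniformly in $z\in\gamma$ by an integrable majorant along the compact contour $\gamma$, so a standard differentiation-under-the-integral argument for Fr\'echet-space-valued integrals (compare Appendix~B of~\cite{HLP:Part1}) gives
$$\delta^\alpha\partial_\xi^\beta\varphi(f(\xi))=\frac{1}{2\pi i}\oint_\gamma\varphi(z)\,\delta^\alpha\partial_\xi^\beta(z-f(\xi))^{-1}\,dz,$$
from which the estimate $\|\delta^\alpha\partial_\xi^\beta\varphi(f(\xi))\|\leq C_{\alpha\beta}'\brak{\xi}^{-|\beta|}$ is immediate using $\sup_{z\in\gamma}|\varphi(z)|<\infty$ and the finite length of $\gamma$. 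The main technical point is the clean iterated Leibniz expansion of the resolvent derivatives together with the bookkeeping of the $\brak{\xi}^{-1}$ factors; the smooth dependence of $\xi\mapsto\varphi(f(\xi))$ on $\xi$ inside $\cA^\infty$ and the interchange of derivatives with the contour integral are routine once the uniform estimates are in hand.
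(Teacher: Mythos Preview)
Your proposal is correct and follows essentially the same approach as the paper's own proof: both use the Cauchy integral representation over a contour at positive distance from $[a,b]$, invoke normality to obtain the uniform resolvent bound $\|(z-f(\xi))^{-1}\|\le d^{-1}$, expand $\delta^\alpha\partial_\xi^\beta(z-f(\xi))^{-1}$ via the iterated Leibniz rule into products of resolvents and derivatives of $f$, use the $\tS^0$ estimates on $f$ to collect the $\brak{\xi}^{-|\beta|}$ decay, and then exchange differentiation with the contour integral. The paper cites~\cite{Co:Adv81} for spectral invariance of $\cA^\infty$ and~\cite[Prop.\ C.28]{HLP:Part1} for the interchange step, but otherwise the arguments match.
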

\begin{proof}
Let $\Gamma$ be a rectifiable contour that winds once around $\bigcup_{\xi\in\Rn}\Sp(f(\xi))$. We define $\varphi(f(\xi))$, $\xi\in\Rn$, by using holomorphic functional calculus, i.e., we set
\begin{equation} \label{eq:boundedness.hol-func-cal-definition}
\varphi(f(\xi)) = \frac{1}{2\pi i}\int_\Gamma \varphi(z)(z-f(\xi))^{-1} dz .
\end{equation}
We know by~\cite{Co:Adv81} that $\cA^\infty$ is closed under holomorphic functional calculus. Therefore, we have $\varphi(f(\xi))\in\cA^\infty$ for all $\xi\in\Rn$.

By assumption $f(\xi)$ is a normal element in $\cA$ for all $\xi\in\Rn$, and so $(z-f(\xi))^{-1}$ is also normal for all $z\in\Gamma$ and $\xi\in\Rn$. As the spectral radius of a normal element $x\in\cA$ agrees with $\|x\|$ (see, e.g., \cite{Mu:AP90}), for each $z\in\Gamma$ and $\xi\in\Rn$, we have
\begin{align*}
\left\| (z-f(\xi))^{-1} \right\| &= \sup\left\{ |\lambda| ; \ \lambda\in\Sp\big( (z-f(\xi))^{-1} \big) \right\} \\
&= \sup\left\{ |z-\lambda|^{-1} ; \ \lambda\in\Sp\big( f(\xi) \big) \right\} .
\end{align*}
As $\Gamma$ is a compact subset of $\C$ and $\Sp(f(\xi))\subset[a,b]$ for all $\xi\in\Rn$, we see that there is $C>0$ such that $|z-\lambda|^{-1}\leq C$ for all $z\in\Gamma$ and $\lambda\in\bigcup_{\xi\in\Rn}\Sp(f(\xi))$. Thus, we get
\begin{equation} \label{eq:boundedness.resolvent-of-symbol-boundedness}
\left\| (z-f(\xi))^{-1} \right\| \leq C \qquad \text{for all $z\in\Gamma$ and $\xi\in\Rn$} .
\end{equation}
Let $\alpha,\beta\in\N_0^n$. Then the partial derivative $\delta^\alpha\partial_\xi^\beta[(z-f(\xi))^{-1}]$ can be written as a linear combination of terms of the form,
\begin{equation} \label{eq:boundedness.partial-derivative-of-resolvent-of-symbol}
(z-f(\xi))^{-1}\left( \delta^{\alpha^{(1)}}\partial_\xi^{\beta^{(1)}}f(\xi) \right)(z-f(\xi))^{-1}\cdots(z-f(\xi))^{-1}\left( \delta^{\alpha^{(l)}}\partial_\xi^{\beta^{(l)}}f(\xi) \right)(z-f(\xi))^{-1} ,
\end{equation}
where $\alpha^{(1)},\ldots,\alpha^{(l)}$ and $\beta^{(1)},\ldots,\beta^{(l)}$ are multi-orders such that $\alpha^{(1)}+\cdots+\alpha^{(l)} = \alpha$ and $\beta^{(1)}+\cdots+\beta^{(l)} = \beta$. As $f\in\tS^0(\Rn,\cA^\infty)$ we know that, for each $j=1,\ldots,l$, there is $C_j>0$ such that $\|\delta^{\alpha^{(j)}}\partial_\xi^{\beta^{(j)}} f(\xi)\|\leq C_j\brak{\xi}^{-|\beta^{(j)}|}$ for all $\xi\in\Rn$. Combining this with~(\ref{eq:boundedness.resolvent-of-symbol-boundedness}) and~(\ref{eq:boundedness.partial-derivative-of-resolvent-of-symbol}) shows that there is $C_{\alpha\beta}>0$ such that
\begin{equation} \label{eq:boundedness.partial-derivative-resolvent-estimate}
\left\| \delta^\alpha\partial_\xi^\beta \big[ (z-f(\xi))^{-1} \big] \right\| \leq C_{\alpha\beta}\brak{\xi}^{-|\beta|} \qquad \text{for all $z\in\Gamma$ and $\xi\in\Rn$} .
\end{equation}
It follows from this that the partial derivative $\partial_\xi^\beta$ and the contour integral on the right-hand side of~(\ref{eq:boundedness.hol-func-cal-definition}) can be swapped~\cite[Prop.\ C.28]{HLP:Part1}. Furthermore, as $\delta^\alpha:\cA^\infty\rightarrow\cA^\infty$ is a continuous linear map, $\delta^\alpha$ can also be interchanged with the same contour integral. Thus, by using~(\ref{eq:boundedness.partial-derivative-resolvent-estimate}) we obtain
\begin{align*}
\left\| \delta^\alpha\partial_\xi^\beta\big[ \varphi\big( f(\xi) \big) \big] \right\| &\leq \frac{1}{2\pi} \int_{r_0}^{r_1} \left| \varphi\big( \gamma(t) \big) \right| \, \left\| \delta^\alpha\partial_\xi^\beta\big[ (\gamma(t)-f(\xi))^{-1} \big] \right\| \, |\gamma'(t)| dt \\
&\leq \frac{C_{\alpha\beta}}{2\pi} \int_{r_0}^{r_1} \left| \varphi\big( \gamma(t) \big) \right| \, |\gamma'(t)| dt\cdot\brak{\xi}^{-|\beta|} ,
\end{align*}
where $\gamma:[r_0,r_1]\rightarrow\C$ is a parametrization of $\Gamma$. Thus, we see that $\varphi(f(\xi))\in\tS^0(\Rn,\cA^\infty)$. This proves the lemma.
\end{proof}
\begin{proposition} \label{prop:boundedness.PsiDOs-boundedness}
Let $f\in\tS^0(\Rn,\cA^\infty)$. Then the pseudodifferential multiplier $P_f$ gives rise to a continuous linear map from $\cH(\Rn,\cA)$ to itself. Furthermore, the map $f\mapsto P_f$ gives rise to a continuous linear map from $\tS^m(\Rn,\cA^\infty)$ to $\cL(\cH(\Rn,\cA))$ for every $m<0$.
\end{proposition}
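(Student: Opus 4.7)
The plan is to establish the two claims in reverse order: first claim (ii) by an iteration based on the composition formula, then claim (i) via a H\"ormander-type trick combined with Lemma~\ref{lem:boundedness.hol-func-cal-of-symbols}. Both reductions terminate at a base case at order $m<-n$, which is handled by Lemma~\ref{lem:boundedness.Schur-test}.

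For the base case, given $f\in\tS^m(\Rn,\cA^\infty)$ with $m<-n$, Fubini applied to~\textup{(\ref{eq:PsiDOs:inverse-Fourier-transform-multiplier-formula})} realizes $P_f$ on $\cS(\Rn,\cA^\infty)$ as the integral operator with continuous kernel
\begin{equation*}
k(x,y) = \int_{\Rn} e^{i\acou{x-y}{\xi}} \alpha_{-x}\bigl(f(\xi+Bx)\bigr)\dbar\xi .
\end{equation*}
Iterated integration by parts via $(1-\Delta_\xi)^N e^{i\acou{x-y}{\xi}} = \brak{x-y}^{2N} e^{i\acou{x-y}{\xi}}$, together with the isometry of each $\alpha_{-x}$, yields $\|k(x,y)\| \leq C_N p_{2N}(f)\brak{x-y}^{-2N}$ for all $N\geq 0$. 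Choosing $2N>n$ satisfies the hypotheses of Lemma~\ref{lem:boundedness.Schur-test} with $p=q\equiv 1$, giving $\|P_f\|_{\cL(\cH)} \leq C\,p_{N'}(f)$ for a fixed semi-norm.

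For claim (ii), fix $f\in\tS^m(\Rn,\cA^\infty)$ with $m<0$. Theorems~\ref{thm:PsiDOs.composition-formula} and~\ref{thm:PsiDOs.adjoint-formula} give the operator identity $P_f^* P_f = P_{f^\star\sharp f}$ with $f^\star\sharp f\in\tS^{2m}$ depending continuously on $f$. The Hilbert-module Cauchy--Schwarz inequality then yields, for every $u\in\cS(\Rn,\cA^\infty)$,
\begin{equation*}
\|P_f u\|_{\cH}^2 = \|\acou u {P_{f^\star\sharp f} u}\| \leq \|u\|_{\cH}\,\|P_{f^\star\sharp f} u\|_{\cH} ,
\end{equation*}
so that $\|P_f\|^2 \leq \|P_{f^\star\sharp f}\|$ as soon as the right-hand side is finite. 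Setting $f_1:=f^\star\sharp f$ and iterating produces symbols $f_k\in\tS^{2^k m}$ depending continuously on $f$ and satisfying $\|P_f\|^{2^k}\leq \|P_{f_k}\|$; choosing $k$ so that $2^k m<-n$ reduces the problem to the base case and, tracking the semi-norm dependence, produces a continuous estimate $\|P_f\|_{\cL(\cH)}\leq C\,p_{N'}(f)$ on $\tS^m$.

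For claim (i), given $f\in\tS^0(\Rn,\cA^\infty)$ choose $M>0$ with $M^2>\sup_\xi\|f(\xi)\|^2$. Lemma~\ref{lem:PsiDOs.pointwise-product-continuity} places $h:=M^2-f^*f$ in $\tS^0$; each $h(\xi)$ is self-adjoint with spectrum contained in a fixed compact subinterval of $(0,M^2]$, so Lemma~\ref{lem:boundedness.hol-func-cal-of-symbols} applied to $\varphi(z)=\sqrt{z}$ delivers $g:=\sqrt{h}\in\tS^0$ with $g^2=h$ pointwise. The composition and adjoint asymptotic expansions then give
\begin{equation*}
f^\star\sharp f + g^\star\sharp g = M^2 + r , \qquad r\in\tS^{-1}(\Rn,\cA^\infty),
\end{equation*}
because the principal parts $f^* f$ and $g^2=M^2-f^* f$ cancel to leave $M^2$ and the first subprincipal term is of order $-1$. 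Evaluating against $u\in\cS(\Rn,\cA^\infty)$ in the $\cA$-valued inner product and using $\acou{P_g u}{P_g u}\geq 0$ in $\cA$ together with $\|v\|_{\cH}^2 = \|\acou v v\|$, one obtains $\|P_f u\|_{\cH}^2 \leq (M^2+\|P_r\|)\|u\|_{\cH}^2$; since $r\in\tS^{-1}$, claim (ii) supplies $\|P_r\|<\infty$. The main technical obstacle lies in this last step: the classical H\"ormander argument rests on scalar positivity, whereas here one must transport it to the Hilbert $C^*$-module framework, exploiting that $0\leq a\leq b$ in $\cA$ implies $\|a\|\leq\|b\|$ and $\|v\|_{\cH} = \|\acou v v\|^{1/2}$. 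A further delicate point is that $g=\sqrt{h}$ must itself lie in $\tS^0$; this is exactly the content of Lemma~\ref{lem:boundedness.hol-func-cal-of-symbols}, whose applicability is secured by choosing $M$ so that the spectra of $h(\xi)$ remain uniformly bounded away from $0$.
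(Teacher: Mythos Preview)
Your proposal is correct and follows essentially the same approach as the paper: a Schur-test base case at sufficiently negative order, the doubling iteration via $P_f^*P_f=P_{f^\star\sharp f}$ and Cauchy--Schwarz to reach all $m<0$, and the H\"ormander square-root trick with Lemma~\ref{lem:boundedness.hol-func-cal-of-symbols} to handle $m=0$. The only differences are cosmetic (the paper fixes the base order at $-n-1$ and adds $1$ before taking the square root rather than choosing $M$ strictly larger), and your treatment of the $C^*$-positivity issue in the final step matches the paper's.
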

\begin{proof}
Here we mimic the strategy of the proof of the boundedness of pseudodifferential operators on $\Rn$ using Schur's lemma in the literature (see, e.g., \cite{SR:CRC91}). However, as we are dealing with $\cA^\infty$-valued symbols, here we use Lemma~\ref{lem:boundedness.Schur-test} instead of the ordinary Schur's lemma (see, e.g., \cite[Lem.\ 3.7]{SR:CRC91}) and Lemma~\ref{lem:boundedness.hol-func-cal-of-symbols} instead of~\cite[Lem.\ 2.1]{SR:CRC91} for reducing the proof to the case of negative order symbols.
Let $u\in\cS(\Rn,\cA^\infty)$ and suppose that $f\in\tS^{-n-1}(\Rn,\cA^\infty)$. Using~(\ref{eq:PsiDOs:inverse-Fourier-transform-multiplier-formula}) and~(\ref{eq:PsiDOs.PsiDOs.definition}) we can write
\begin{align}
\nonumber P_fu(x) &= \iint e^{i\acou {x-y} \xi} \alpha_{-x}(f(\xi+Bx))u(y) dy\dbar\xi \\\nonumber
&= \int_{\Rn} \left( \int_{\Rn} e^{i \acou {x-y} \xi} \alpha_{-x}(f(\xi+Bx)) \dbar\xi \right) u(y) dy \\
&=: \int_{\Rn} K(x,y)u(y) dy . \label{eq:boundedness.PsiDO-integral-kernel-expression}
\end{align}
As $u\in\cS(\Rn,\cA^\infty)$ and $f\in\tS^{-n-1}(\Rn,\cA^\infty)$, all the integrands are absolutely integrable with respect to the norm $\|\cdot\|$ on $\cA$. Note also that, for every $\alpha\in\N_0^n$, we have
\begin{align*}
\left\| (x-y)^\alpha K(x,y) \right\| &= \left\| \int_{\Rn} (x-y)^\alpha e^{i\acou {x-y} \xi} \alpha_{-x}(f(\xi+Bx)) \dbar\xi \right \| \\
&= \left\| \int_{\Rn} e^{i\acou {x-y} \xi} \alpha_{-x}\big( \partial_\xi^\alpha f(\xi+Bx) \big) \dbar\xi \right\| \\
&\leq \int_{\Rn} \left\| \alpha_{-x} \big( \partial_\xi^\alpha f(\xi+Bx) \big) \right\| \dbar\xi \\
&= \int_{\Rn} \left\| \partial_\xi^\alpha f(\xi+Bx) \right\| \dbar\xi .
\end{align*}
We know that $\|\partial_\xi^\alpha f(\xi+Bx)\|\leq p_{|\alpha|}(f)\brak{\xi+Bx}^{-n-1-|\alpha|}$ for all $x,\xi\in\Rn$ since $f\in\tS^{-n-1}(\Rn,\cA^\infty)$. Here $p_{|\alpha|}$ is the semi-norm on the space of standard symbols given in~(\ref{eq:PsiDOs.standard-symbol-semi-norms}). It then follows that
\begin{equation*}
\left\| (x-y)^\alpha K(x,y) \right\| \leq p_{|\alpha|}(f) \int_{\Rn} \brak{\xi+Bx}^{-n-1-|\alpha|} \dbar\xi = p_{|\alpha|}(f) \int_{\Rn} \brak{\xi}^{-n-1-|\alpha|} \dbar\xi < \infty .
\end{equation*}
By using this we deduce that there is $C>0$ independent of $f$ such that
\begin{equation*}
(1+|x-y|^2)^n \|K(x,y)\|\leq C p_{2n}(f) \qquad \forall x,y\in\Rn .
\end{equation*}
It follows from this that there is a continuous semi-norm $p$ on $\tS^{-n-1}(\Rn,\cA^\infty)$ such that
\begin{equation*}
\sup_{x\in\Rn} \int_{\Rn} \left\| K(x,y) \right\| dy \leq p(f) , \qquad \sup_{y\in\Rn} \int_{\Rn} \left\|  K(x,y) \right\| dx \leq p(f) .
\end{equation*}
Combining this with Lemma~\ref{lem:boundedness.Schur-test} and~(\ref{eq:boundedness.PsiDO-integral-kernel-expression}) shows that
\begin{equation*}
\|P_f u\| \leq 4p(f)\|u\| \qquad \forall u\in\cS(\Rn,\cA^\infty) .
\end{equation*}
As $\cS(\Rn,\cA^\infty)$ is dense in $\cH(\Rn,\cA)$ this shows that $P_f$ uniquely extends to a bounded linear operator on $\cH(\Rn,\cA)$. Furthermore, the map $f\mapsto P_f$ gives rise to a continuous linear map from $\tS^{-n-1}(\Rn,\cA^\infty)$ to $\cL(\cH(\Rn,\cA))$.

We know by Theorem~\ref{thm:PsiDOs.composition-formula} and Theorem~\ref{thm:PsiDOs.adjoint-formula} that if $f\in\tS^{-(n+1)/2}(\Rn,\cA^\infty)$, then $f^\star\sharp f\in\tS^{-n-1}(\Rn,\cA^\infty)$. Using the Cauchy-Schwarz inequality for Hilbert $C^*$-modules  (see, e.g., \cite[Prop.\ 1.1]{La:CUP95}) and the proof in the case of order $-n-1$ symbols above we see that, for all $f\in\tS^{-(n+1)/2}(\Rn,\cA^\infty)$ and $u\in\cS(\Rn,\cA^\infty)$, we have
\begin{equation*}
\| P_f u \|^2 = \| \acou {P_f u} {P_f u} \| = \| \acou {P_{f^\star\sharp f}u} u \| \leq \| P_{f^\star\sharp f}u \| \|u\| \leq 4p(f^\star\sharp f)\|u\|^2  .
\end{equation*}
Combining this with the continuity assertions in Theorem~\ref{thm:PsiDOs.composition-formula} and Theorem~\ref{thm:PsiDOs.adjoint-formula} we see that there is a continuous semi-norm $q$ on $\tS^{-(n+1)/2}(\Rn,\cA^\infty)$ such that
\begin{equation*}
\| P_f u \| \leq q(f)\|u\| , \qquad \text{for $f\in\tS^{-(n+1)/2}(\Rn,\cA^\infty)$, $u\in\cS(\Rn,\cA^\infty)$} .
\end{equation*}
Since $\bigcup_{m<0}\tS^m(\Rn,\cA^\infty) = \bigcup_{k\in\N_0}\tS^{-(n+1)/{2^k}}(\Rn,\cA^\infty)$ we can proceed by induction to show that the map $f\mapsto P_f$ gives rise to a continuous linear map from $\tS^m(\Rn,\cA^\infty)$ to $\cL(\cH(\Rn,\cA))$ for every $m<0$.

Now suppose that $f\in\tS^0(\Rn,\cA^\infty)$. We set
\begin{equation*}
g(\xi) := \sup_{\xi\in\Rn}\|f(\xi)\|^2 - f(\xi)^*f(\xi) .
\end{equation*}
It follows from Lemma~\ref{lem:PsiDOs.pointwise-product-continuity} that $f(\xi)^*f(\xi)\in\tS^0(\Rn,\cA^\infty)$, and hence $g(\xi)$ belongs to $\tS^0(\Rn,\cA^\infty)$. Furthermore, we also see that, for all $\xi\in\Rn$, we have
\begin{equation*}
0\leq \sup_{\xi\in\Rn}\|f(\xi)\|^2 - \|f(\xi)^*f(\xi)\| \leq g(\xi) \leq \|g(\xi)\| \leq \sup_{\xi\in\Rn}\|f(\xi)\|^2 + \|f(\xi)^*f(\xi)\| \leq 2C < \infty ,
\end{equation*}
where we have set $C = \sup_{\xi\in\Rn}\|f(\xi)\|^2$ and $\leq$ is meant in the sense that, for $x,y\in\cA$, $x\geq y$ if $x-y\geq 0$ in $\cA$. Thus, we obtain
\begin{equation*}
1\leq 1+g(\xi)\leq \|1+g(\xi)\|\leq 1+2C \qquad \forall \xi\in\Rn .
\end{equation*}
This shows that $\Sp(1+g(\xi))\subset[1,1+2C]$. As $z\mapsto z^{\frac{1}{2}}$ is a holomorphic function on the domain containing $[1,1+2C]$ it follows from Lemma~\ref{lem:boundedness.hol-func-cal-of-symbols} that $h(\xi) := (1+g(\xi))^{\frac{1}{2}}\in\tS^0(\Rn,\cA^\infty)$.

We know by Theorem~\ref{thm:PsiDOs.composition-formula} and Theorem~\ref{thm:PsiDOs.adjoint-formula} that both $f^\star\sharp f(\xi)-f(\xi)^*f(\xi)$ and $h^\star\sharp h(\xi)-h(\xi)^*h(\xi)$ are in $\tS^{-1}(\Rn,\cA^\infty)$. Thus, there is $b\in\tS^{-1}(\Rn,\cA^\infty)$ such that
\begin{align*}
f^\star\sharp f(\xi) + h^\star\sharp h(\xi) &= f(\xi)^*f(\xi) + h(\xi)^*h(\xi) + b(\xi) \\
&= f(\xi)^*f(\xi) + 1 + g(\xi) + b(\xi) \\
&= f(\xi)^*f(\xi) + 1 + C - f(\xi)^*f(\xi) + b(\xi) \\
&= 1 + C + b(\xi) .
\end{align*}
Combining this with Theorem~\ref{thm:PsiDOs.composition-formula} and Theorem~\ref{thm:PsiDOs.adjoint-formula} once again we see that, for all $u\in\cS(\Rn,\cA^\infty)$, we have
\begin{equation} \label{eq:boundedness.Pfu-Cstar-algebra-ordering-estimate}
\acou {P_f u} {P_f u} \leq \acou {P_f u} {P_f u} + \acou {P_h u} {P_h u} = \acou {P_{f^\star\sharp f+h^\star\sharp h}u} u = (1+C)\acou u u + \acou {P_b u} u .
\end{equation}
Thanks to the above proof of the result in the case of negative order symbols and the Cauchy-Schwarz inequality for Hilbert $C^*$-modules we know that there is a continuous semi-norm $p$ on $\tS^{-1}(\Rn,\cA^\infty)$ such that
\begin{equation*}
\| \acou {P_b u} u \| \leq \| P_b u \| \|u\| \leq p(b) \|u\|^2 \qquad \forall u\in\cS(\Rn,\cA^\infty) .
\end{equation*}
Combining this with~(\ref{eq:boundedness.Pfu-Cstar-algebra-ordering-estimate}) we get
\begin{equation*}
\|P_f u\|^2 \leq (1+C) \|u\|^2 + \| \acou {P_b u} u \| \leq (1+C+p(b))\|u\|^2 \qquad \forall u\in\cS(\Rn,\cA^\infty) .
\end{equation*}
This shows that $P_f$ uniquely extends to a bounded linear operator from $\cH(\Rn,\cA)$ to itself. The proof is complete.
\end{proof}

\begin{remark}
The assertion about the continuity of the map $f\mapsto P_f$ in Proposition~\ref{prop:boundedness.PsiDOs-boundedness} will be utilized in the construction of the resolvent in \S\ref{sec:Resolvents}. For the sake of completeness, it is tempting to extend the continuity of the map $f\mapsto P_f$ to the space of symbols of degree $0$. However, due to the lack of continuity of the holomorphic functional calculus map $f(\xi)\mapsto\varphi(f(\xi))$ on $\tS^0(\Rn,\cA^\infty)$ used in the proof, the continuity of the map $\tS^0(\Rn,\cA^\infty)\ni f\mapsto P_f\in\cL(\cH(\Rn,\cA))$ cannot be obtained immediately from the proof of the boundedness of ordinary pseudodifferential operators in the literature. Thus, in this article, we only prove the continuity of the map $f\mapsto P_f$ on the space of symbols of negative orders, which suffices for our purpose. 
\end{remark}

\section{Weakly Parametric Pseudodifferential Calculus} \label{sec:weakly}
In this section, we introduce weakly parametric symbols and construct the weakly parametric pseudodifferential calculus in the setting of twisted $C^*$-dynamical systems.

In what follows we let $\Gamma$ be an open sector in $\C\setminus\{0\}$. Note that $\Gamma$ admits an exhaustion $\Gamma = \bigcup_{j\geq 0}\Gamma_j$, where the $\Gamma_j$ are closed subsectors of $\Gamma$ such that $\Gamma_j\subset\mathring{\Gamma}_{j+1}$.

By using Definition~\ref{def:PsiDOs:standard-symbols} of $\cA^\infty$-valued symbols, we can define weakly parametric $\cA^\infty$-valued symbols in the same way as in~\cite[Def.\ 1.1]{GS:IM95} as follows.

\begin{definition} \label{def:weakly.weakly-parametric-symbols}
Let $m,d\in\R$. The space $\tS^{m,0}(\Rn\times\Gamma,\cA^\infty)$ consists of maps $f(\xi,\mu)\in C^\infty(\Rn\times\Gamma,\cA^\infty)$ that are holomorphic with respect to $\mu\in\Gamma$ and satisfy, for all $j\geq 0$,
\begin{equation*}
\text{$\partial_z^j f(\cdot,\tfrac{1}{z})\in\tS^{m+j}(\Rn,\cA^\infty)$ for $\tfrac{1}{z}\in\Gamma$},
\end{equation*}
with uniform estimates in $\tS^{m+j}(\Rn,\cA^\infty)$ for $|z|\leq 1$ and $\tfrac{1}{z}$ in closed subsectors of $\Gamma$. Moreover, we set $\tS^{m,d}(\Rn\times\Gamma,\cA^\infty) = \mu^d\tS^{m,0}(\Rn\times\Gamma,\cA^\infty)$; that is, $\tS^{m,d}(\Rn\times\Gamma,\cA^\infty)$ consists of maps $f(\xi,\mu)\in C^\infty(\Rn\times\Gamma,\cA^\infty)$ that are holomorphic with respect to $\mu\in\Gamma$ such that, for all $j\geq 0$,
\begin{equation*}
\text{$\partial_z^j(z^d f(\cdot,\tfrac{1}{z}))\in\tS^{m+j}(\Rn,\cA^\infty)$ for $\tfrac{1}{z}\in\Gamma$},
\end{equation*}
with uniform estimates in $\tS^{m+j}(\Rn,\cA^\infty)$ for $|z|\leq 1$ and $\tfrac{1}{z}$ in closed subsectors of $\Gamma$. We call these symbols \emph{weakly parametric}.
\end{definition}


%
\begin{remark}
In Definition~\ref{def:weakly.weakly-parametric-symbols} we adapt the same notations and conventions given in~\cite[pp.\ 483--484]{GS:IM95}. For example, $\partial_z^j f(\xi,\frac{1}{z})$ means the $j$th $z$-derivative of the map $z\mapsto f(\xi,\frac{1}{z})$.
\end{remark}

We endow $\tS^{m,d}(\Rn\times\Gamma,\cA^\infty)$, $m,d\in\R$, with the locally convex topology generated by the semi-norms,
\begin{equation*}
p_{N\Gamma'}(f) := \sup_{|\alpha|+|\beta|+j\leq N} \sup_{\xi\in\Rn} \sup_{\substack{\frac{1}{z}\in\Gamma' \\ |z|\leq 1}} (1+|\xi|)^{-m-j+|\beta|} \left\| \delta^\alpha\partial_\xi^\beta\partial_z^j \big( z^d f \big( \xi,\tfrac{1}{z} \big) \big) \right\| ,
\end{equation*}
where $N$ ranges over all non-negative integers and $\Gamma'$ ranges over all closed subsectors of $\Gamma$. The space $\tS^{m,d}(\Rn\times\Gamma,\cA^\infty)$ is a Fr\'echet space with respect to these semi-norms. Note also that, if $m\leq m'$ and $d'-d\in\N_0$, then we have a continuous inclusion $\tS^{m,d}(\Rn\times\Gamma,\cA^\infty)\subset\tS^{m',d'}(\Rn\times\Gamma,\cA^\infty)$.

All the properties of weakly parametric pseudodifferential calculus and their proofs hold \emph{verbatim} in the setting of twisted $C^*$-dynamical systems, except for the composition formula (Theorem~\ref{thm:weakly.composition-formula} and Theorem~\ref{thm:weakly.composition-formula-weakly-polyhomogeneous}). Therefore, in the rest of this section, we only state the results and omit proofs if the same arguments work \emph{verbatim} in our setting.

\begin{lemma} \label{lem:weakly.pointwise-product-continuity}
Let $m_j,d_j\in\R$, $j=1,2$. Then the product of $\cA^\infty$ gives rise to a continuous bilinear map from $\tS^{m_1,d_1}(\Rn\times\Gamma,\cA^\infty)\times\tS^{m_2,d_2}(\Rn\times\Gamma,\cA^\infty)$ to $\tS^{m_1+m_2,d_1+d_2}(\Rn\times\Gamma,\cA^\infty)$.
\end{lemma}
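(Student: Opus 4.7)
The plan is to reduce the statement to the already-established pointwise product continuity for ordinary symbols (Lemma~\ref{lem:PsiDOs.pointwise-product-continuity}) by exploiting the reformulation built into Definition~\ref{def:weakly.weakly-parametric-symbols} via the variable $z = 1/\mu$. For $f_j \in \tS^{m_j, d_j}(\Rn\times\Gamma, \cA^\infty)$ ($j=1,2$), I would set
\begin{equation*}
g_j(\xi, z) := z^{d_j}\, f_j\big(\xi, \tfrac{1}{z}\big), \qquad 1/z \in \Gamma,
\end{equation*}
so that, by definition, each $\partial_z^k g_j(\cdot, z)$ belongs to $\tS^{m_j+k}(\Rn, \cA^\infty)$ with uniform estimates over $|z| \leq 1$, $1/z \in \Gamma'$, for any closed subsector $\Gamma' \subset \Gamma$.

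The product $h(\xi, z) := g_1(\xi, z)\, g_2(\xi, z) = z^{d_1+d_2} f_1(\xi, \tfrac{1}{z}) f_2(\xi, \tfrac{1}{z})$ would then be analyzed via the Leibniz rule in $z$,
\begin{equation*}
\partial_z^j h(\xi, z) = \sum_{k=0}^j \binom{j}{k} (\partial_z^k g_1)(\xi, z)\, (\partial_z^{j-k} g_2)(\xi, z).
\end{equation*}
Lemma~\ref{lem:PsiDOs.pointwise-product-continuity} applied pointwise in $z$ tells us that each summand belongs to $\tS^{m_1+m_2+j}(\Rn, \cA^\infty)$, and the continuity of the pointwise product map (together with the uniform $z$-estimates built into the definitions of $g_j$) yields uniform $\tS^{m_1+m_2+j}$ estimates over $|z| \leq 1$ with $1/z \in \Gamma'$. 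Holomorphy of $h$ in $\mu$ is immediate from that of $f_1$ and $f_2$. This verifies the membership $f_1 f_2 \in \tS^{m_1+m_2, d_1+d_2}(\Rn\times\Gamma, \cA^\infty)$.

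For the bilinear continuity, I would then unpack the semi-norms $p_{N,\Gamma'}$, which differ from the ordinary symbol semi-norms only by the extra supremum over $z$ and the extra $\partial_z^j$. Combining the Leibniz expansion above with the explicit continuity estimate from Lemma~\ref{lem:PsiDOs.pointwise-product-continuity} (applied uniformly in $z$) yields a bound of the form $p_{N, \Gamma'}(f_1 f_2) \leq C_{N, \Gamma'}\, p_{N', \Gamma'}(f_1)\, p_{N', \Gamma'}(f_2)$ for some $N' \geq N$ and constant $C_{N, \Gamma'} > 0$. No essential obstacle is expected here; the argument is a Leibniz rearrangement combined with a uniform-in-$z$ application of the ordinary-symbol result, and the only point requiring care is that the uniformity over closed subsectors $\Gamma' \subset \Gamma$ is precisely what the weakly parametric topology is designed to encode.
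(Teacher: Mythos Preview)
Your proposal is correct and is precisely the natural argument. Note, however, that the paper does not give its own proof of this lemma: immediately before stating it the authors remark that ``all the properties of weakly parametric pseudodifferential calculus and their proofs hold \emph{verbatim}'' in their setting (referring to~\cite{GS:IM95}), and they therefore omit proofs whenever the Grubb--Seeley arguments carry over unchanged. Your reduction via $g_j(\xi,z)=z^{d_j}f_j(\xi,1/z)$ and the Leibniz rule in $z$, combined with the continuity statement of Lemma~\ref{lem:PsiDOs.pointwise-product-continuity} applied uniformly in $z$ over closed subsectors, is exactly the implementation of that omitted verbatim argument.
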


Adopting the notation of~\cite{GS:IM95}, we shall denote
\begin{gather*}
\tS^{\infty,d}(\Rn\times\Gamma,\cA^\infty) := \bigcup_{m\in\R}\tS^{m,d}(\Rn\times\Gamma,\cA^\infty) , \\
\tS^{-\infty,d}(\Rn\times\Gamma,\cA^\infty) := \bigcap_{m\in\R}\tS^{m,d}(\Rn\times\Gamma,\cA^\infty) .
\end{gather*}

The following is the definition of asymptotic expansions, i.e., the analogue of~\cite[Def.\ 1.8]{GS:IM95}.

\begin{definition} \label{def:weakly.asymptotic-expansion-of-parametric-symbols}
Let $f(\xi,\mu)\in\tS^{m,d}(\Rn\times\Gamma,\cA^\infty)$, $m,d\in\R$ and $f_j(\xi,\mu)\in\tS^{m-j,d}(\Rn\times\Gamma,\cA^\infty)$, $j\geq 0$. We say that $f(\xi,\mu)\sim\sum_{j\geq 0}f_j(\xi,\mu)$ in $\tS^{\infty,d}(\Rn\times\Gamma,\cA^\infty)$ if
\begin{equation*}
f(\xi,\mu)-\sum_{j<N}f_j(\xi,\mu)\in\tS^{m-N,d}(\Rn\times\Gamma,\cA^\infty) \qquad \text{for all $N$} .
\end{equation*}
\end{definition}

The following is Borel's lemma for weakly parametric symbols, the counterpart of~\cite[Lem.\ 1.9]{GS:IM95} in the setting of twisted $C^*$-dynamical systems.

\begin{lemma} \label{lem:weakly.Borel-lemma}
Let $m,d\in\R$ and $f_j(\xi,\mu)\in\tS^{m-j,d}(\Rn\times\Gamma,\cA^\infty)$, $j\geq 0$. Then there exists $f(\xi,\mu)\in\tS^{m,d}(\Rn\times\Gamma,\cA^\infty)$ such that $f(\xi,\mu)\sim\sum_{j\geq 0}f_j(\xi,\mu)$ in $\tS^{\infty,d}(\Rn\times\Gamma,\cA^\infty)$.
\end{lemma}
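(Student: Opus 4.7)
The plan is to adapt the classical cutoff construction from the proof of Lemma~\ref{lem:PsiDOs.Borel-lemma} (and of \cite[Lem.\ 1.9]{GS:IM95}) by introducing a cutoff in the $\xi$-variable \emph{only}; making the cutoff independent of $\mu$ will automatically preserve the holomorphy in $\mu$ and the uniform $z$-estimates required in Definition~\ref{def:weakly.weakly-parametric-symbols}. First I would reduce to the case $d=0$: since multiplication by $\mu^d$ is, by construction, a topological isomorphism $\tS^{m-j,0}(\Rn\times\Gamma,\cA^\infty)\to\tS^{m-j,d}(\Rn\times\Gamma,\cA^\infty)$ for every $j$ and preserves asymptotic expansions, it is enough to treat $d=0$. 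Then fix a cutoff $\chi\in C^\infty(\R)$ with $\chi\equiv 0$ on $(-\infty,1]$ and $\chi\equiv 1$ on $[2,\infty)$, pick a sequence of radii $R_j\nearrow\infty$ to be chosen below, and set
\[
f(\xi,\mu):=\sum_{j\geq 0}\chi\bigl(\brak{\xi}/R_j\bigr)\,f_j(\xi,\mu),\qquad (\xi,\mu)\in\Rn\times\Gamma.
\]
Since $\chi(\brak{\xi}/R_j)=0$ whenever $\brak{\xi}\leq R_j$, the sum is pointwise finite in $\xi$, so $f$ is smooth in $\xi$ and holomorphic in $\mu$ on $\Rn\times\Gamma$.

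The quantitative core is a bound on each cut-off term in the seminorms of $\tS^{m-N,0}(\Rn\times\Gamma,\cA^\infty)$ when $j\geq N$. A routine chain-rule computation yields $|\partial_\xi^\gamma\chi(\brak{\xi}/R_j)|\leq C_\gamma\brak{\xi}^{-|\gamma|}$, uniformly in $R_j\geq 1$, on the support of the cutoff. Because $\chi(\brak{\cdot}/R_j)$ is scalar and independent of $z$, it commutes with $\delta^\alpha$ and with $\partial_z^k$, and passes through the Leibniz sum in $\xi$. Combining this with the $\tS^{m-j,0}$-estimates of $f_j$ and the inequality $\brak{\xi}^{N-j}\leq R_j^{N-j}$ (valid on the support when $j\geq N$), I obtain an estimate of the form
\[
p_{N,\Gamma'}\!\bigl(\chi(\brak{\cdot}/R_j)f_j\bigr)\leq C_{N,\Gamma'}\,R_j^{N-j}\,p_{N,\Gamma'}(f_j),\qquad j\geq N,
\]
where on the left $p_{N,\Gamma'}$ is a seminorm on $\tS^{m-N,0}$ and on the right on $\tS^{m-j,0}$. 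Since the topology of each $\tS^{m-N,0}(\Rn\times\Gamma,\cA^\infty)$ is generated by the countable family $\{p_{N,\Gamma_k}\}_{k\geq 0}$ associated with the exhaustion $\Gamma=\bigcup_k\Gamma_k$, a standard diagonal choice produces $R_j\nearrow\infty$ with $p_{N,\Gamma_k}\bigl(\chi(\brak{\cdot}/R_j)f_j\bigr)\leq 2^{-j}$ whenever $N,k\leq j$.

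With such $R_j$, for every $N\geq 0$ and every closed subsector $\Gamma'\subset\Gamma_{k_0}$, the tail $\sum_{j\geq N_0}\chi(\brak{\cdot}/R_j)f_j$ converges absolutely in $\tS^{m-N,0}(\Rn\times\Gamma,\cA^\infty)$ as soon as $N_0\geq\max(N,k_0)$; together with the fact that the finite head trivially lies in $\tS^{m,0}$, this shows $f\in\tS^{m,0}(\Rn\times\Gamma,\cA^\infty)$. Decomposing
\[
f(\xi,\mu)-\sum_{j<N}f_j(\xi,\mu)=\sum_{j<N}\bigl(\chi(\brak{\xi}/R_j)-1\bigr)f_j(\xi,\mu)+\sum_{j\geq N}\chi(\brak{\xi}/R_j)f_j(\xi,\mu),
\]
the first (finite) sum is compactly supported in $\xi$ and hence lies in $\tS^{-\infty,0}(\Rn\times\Gamma,\cA^\infty)$, while the second lies in $\tS^{m-N,0}$ by the preceding. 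This gives $f\sim\sum_jf_j$ in the sense of Definition~\ref{def:weakly.asymptotic-expansion-of-parametric-symbols}. The only mild difficulty I expect is the joint bookkeeping of two indices in the seminorms—the $\xi$-order $N$ and the exhausting subsector $\Gamma_k$—but since the cutoff is $\mu$-independent and scalar, the uniform $z$-estimates on closed subsectors for $f_j$ transfer intact to $\chi(\brak{\cdot}/R_j)f_j$, and the argument reduces to the classical $\xi$-cutoff construction used in Lemma~\ref{lem:PsiDOs.Borel-lemma}.
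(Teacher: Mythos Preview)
Your proposal is correct and is precisely the classical cutoff-in-$\xi$ construction underlying \cite[Lem.~1.9]{GS:IM95}, to which the paper defers without giving its own argument. The only extra bookkeeping beyond Lemma~\ref{lem:PsiDOs.Borel-lemma}---the countable exhaustion $\Gamma=\bigcup_k\Gamma_k$ entering the seminorm family---is handled correctly by your diagonal choice of the $R_j$.
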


The following definition of weakly polyhomogeneous symbols is the analogue of~\cite[Def.\ 1.10]{GS:IM95}. Here we only deal with $1$-step polyhomogeneous symbols, i.e., there is $m\in\R$ such that $m_j = m-j$ for all $j\geq 0$.

\begin{definition} \label{def:weakly.weakly-polyhomogeneous-symbols}
Let $d\in\R$. Then $f(\xi,\mu)\in\tS^{\infty,d}(\Rn\times\Gamma,\cA^\infty)$ is said to be \emph{weakly polyhomogeneous} if there exist symbols $f_{m-j}(\xi,\mu)\in\tS^{m-j-d,d}(\Rn\times\Gamma,\cA^\infty)$, $m\in\R$, $j=0,1,\ldots$, homogeneous in $(\xi,\mu)$ for $|\xi|\geq 1$ of degree $m-j$, such that $f(\xi,\mu)\sim\sum_{j\geq 0}f_{m-j}(\xi,\mu)$ in $\tS^{\infty,d}(\Rn\times\Gamma,\cA^\infty)$ in the sense of Definition~\ref{def:weakly.asymptotic-expansion-of-parametric-symbols}.
\end{definition}

\begin{remark} \label{rem:weakly.weakly-polyhomogeneous-symbol-partial-derivative}
Let $f(\xi,\mu)\in\tS^{\infty,d}(\Rn\times\Gamma,\cA^\infty)$, $f(\xi,\mu)\sim\sum_{j\geq 0}f_{m-j}(\xi,\mu)$ in $\tS^{\infty,d}(\Rn\times\Gamma,\cA^\infty)$, be a weakly polyhomogeneous symbol as in Definition~\ref{def:weakly.weakly-polyhomogeneous-symbols}. Then, for every $j\geq 0$ and $\alpha,\beta\in\N_0^n$, $\delta^\alpha\partial_\xi^\beta f_{m-j}(\xi,\mu)$ is in $\tS^{m-j-|\beta|-d,d}(\Rn\times\Gamma,\cA^\infty)$ and homogeneous in $(\xi,\mu)$ for $|\xi|\geq 1$ of degree $m-j-|\beta|$. Furthermore, it follows from the very definition of weakly polyhomogeneous symbols that, for all $\alpha,\beta\in\N_0^n$, $\delta^\alpha\partial_\xi^\beta f(\xi,\mu)$ is weakly polyhomogeneous and $\delta^\alpha\partial_\xi^\beta f(\xi,\mu)\sim\sum_{j\geq 0}\delta^\alpha\partial_\xi^\beta f_{m-j}(\xi,\mu)$ in $\tS^{\infty,d}(\Rn\times\Gamma,\cA^\infty)$ in the sense of Definition~\ref{def:weakly.asymptotic-expansion-of-parametric-symbols}.
\end{remark}

\begin{lemma} \label{lem:weakly.homogeneous-symbol-is-weakly-parametric}
Let $f:\Rn\times(\Gamma\cup\{0\})\rightarrow\cA^\infty$ be a smooth map such that $f$ is homogeneous in $(\xi,\mu)$ of degree $m\in\Z$ for $(|\xi|^2+|\mu|^2)^{\frac{1}{2}}\geq 1$ and holomorphic in $\mu\in\mathring{\Gamma}$. Then the following holds.
\begin{enumerate}
\item If $m\leq 0$, then $f\in\tS^{m,0}(\Rn\times\Gamma,\cA^\infty)\cap\tS^{0,m}(\Rn\times\Gamma,\cA^\infty)$.
\item For general $m$, $f\in\tS^{m,0}(\Rn\times\Gamma,\cA^\infty)+\tS^{0,m}(\Rn\times\Gamma,\cA^\infty)$ and $\delta^\alpha\partial_\xi^\beta f\in\tS^{m-|\beta|,0}(\Rn\times\Gamma,\cA^\infty)\cap\tS^{0,m-|\beta|}(\Rn\times\Gamma,\cA^\infty)$ when $|\beta|\geq m$.
\end{enumerate}
\end{lemma}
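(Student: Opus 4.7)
The plan is to adapt the argument from the scalar case to the $\cA^\infty$-valued setting. The estimates transfer verbatim once $\|\cdot\|$ is consistently interpreted as the $\cA^\infty$-seminorms. The key reduction is to the degree-zero case: since $m\in\Z$ and $\Gamma\subset\C\setminus\{0\}$, the power $\mu^m$ is single-valued and holomorphic on $\Gamma$, so one may factor $f(\xi,\mu) = \mu^m G(\xi,\mu)$ with $G := \mu^{-m}f$ smooth, holomorphic in $\mu$, and homogeneous of degree $0$. The identity $z^m f(\xi,1/z) = G(\xi,1/z)$ shows that $f\in\tS^{0,m}$ is equivalent to $G\in\tS^{0,0}$, so one half of Part~(1) reduces to the degree-$0$ statement.

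For the degree-$0$ case, use the homogeneity rewriting $G(\xi,1/z) = \tilde G(|z|\xi,\omega(z))$, with $\omega(z) := |z|/z$ on the arc $\{|w|=1\}\cap\Gamma$ and $\tilde G$ smooth with $\tilde G(\cdot,w)\in\tS^0(\Rn,\cA^\infty)$ uniformly in $w$. The $\xi$-derivative estimates $\|\delta^\alpha\partial_\xi^\beta G(\xi,1/z)\|\leq C\langle\xi\rangle^{-|\beta|}$ follow immediately from the chain rule together with a case split on whether $|z||\xi|$ is bounded or large. The key observation for the $z$-derivatives is that $\mu\mapsto \partial_\xi^\beta G(0,\mu)$, being homogeneous of degree $-|\beta|$ and holomorphic on a connected sector, must equal $c_\beta\mu^{-|\beta|}$ for some $c_\beta\in\cA^\infty$; hence $\partial_\xi^\beta G(0,1/z) = c_\beta z^{|\beta|}$ is polynomial in $z$, and $\partial_z^j\partial_\xi^\beta G(0,1/z)$ vanishes for $j>|\beta|$, matching the required bound $\langle\xi\rangle^{j-|\beta|}$ at $\xi = 0$. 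Propagating this to arbitrary $\xi$ uses the asymptotic Taylor expansion $G(\xi,\mu) \sim \sum_{\gamma\geq 0}c_\gamma\mu^{-|\gamma|}\xi^\gamma$ around $\xi=0$ together with the homogeneity control for $|\xi|\gg|\mu|$, interpolated by a further case split on $|z||\xi|$.

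Once $G\in\tS^{0,0}$ is established, membership $f\in\tS^{m,0}$ for $m\leq 0$ follows by Leibniz applied to $f(\xi,1/z)=z^{|m|}G(\xi,1/z)$, where the polynomial factor $z^{|m|}$ combined with the improved $\xi$-decay of $G$ at infinity (of order $\langle\xi\rangle^{m}$, inherited from $f$ being of non-positive degree) supplies the sharp $\langle\xi\rangle^{m-|\beta|}$ estimate. For Part~(2), the assertion about $\delta^\alpha\partial_\xi^\beta f$ when $|\beta|\geq m$ is immediate from Part~(1), since $\delta^\alpha\partial_\xi^\beta f$ is homogeneous of degree $m-|\beta|\leq 0$ and remains holomorphic in $\mu$. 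For the general decomposition $f\in\tS^{m,0}+\tS^{0,m}$, I would introduce a smooth cutoff $\chi(\xi,\mu)$ equal to $1$ for $|\xi|\geq 2|\mu|$ and $0$ for $|\xi|\leq|\mu|$: the piece $\chi f$ is concentrated where $\xi$ dominates and behaves as a standard $\xi$-symbol of degree $m$, hence lies in $\tS^{m,0}$, while $(1-\chi)f$ factors as $\mu^m\cdot[(1-\chi)G]$ with bounded second factor, landing in $\tS^{0,m}$.

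The principal technical obstacle will be the uniform $z$-derivative estimate in the degree-$0$ case: while the endpoint $\xi=0$ is controlled via the explicit polynomial structure of $G(0,\mu)$, propagating the correct estimate globally in $\xi$ requires a careful interpolation between the Taylor expansion near $\xi=0$ and the homogeneous behavior for $|\xi|\gg|\mu|$, since the naive term-by-term bound on $\sum_k c_{jk}z^{-(j+k)}\partial_\mu^k G(\xi,1/z)$ blows up as $z\to 0$ and only survives through the cancellation enforced by $G(0,\mu)$ being constant.
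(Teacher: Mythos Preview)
The paper's own proof is a one-line deferral to Grubb--Seeley~\cite[Lem.~1.14]{GS:IM95}, so the comparison is really between your sketch and the scalar argument there. Your reduction for Part~(1) --- factoring $f=\mu^m G$ with $G$ homogeneous of degree~$0$ (this is legitimate since $m\le 0$ makes $\mu^{-m}=\mu^{|m|}$ a polynomial, so $G$ remains smooth across $\mu=0$), together with the observation that $\partial_\xi^\beta G(0,\mu)=c_\beta\mu^{-|\beta|}$ by holomorphy plus homogeneity --- is exactly the mechanism behind the Grubb--Seeley proof, and the case split on $|z||\xi|$ you describe is the right way to organize the estimates. Your reduction of the second assertion in Part~(2) to Part~(1) is also correct.

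There is, however, a genuine gap in your decomposition argument for $f\in\tS^{m,0}+\tS^{0,m}$ when $m>0$. The cutoff $\chi(\xi,\mu)$ you propose, being a function of $|\xi|/|\mu|$, depends on $|\mu|$ and is therefore \emph{not holomorphic in $\mu$}. Since membership in $\tS^{m,0}(\Rn\times\Gamma,\cA^\infty)$ and $\tS^{0,m}(\Rn\times\Gamma,\cA^\infty)$ explicitly requires holomorphy in $\mu\in\Gamma$ (Definition~\ref{def:weakly.weakly-parametric-symbols}), neither $\chi f$ nor $(1-\chi)f$ can lie in these spaces, and the argument collapses. A cutoff depending on $\xi$ alone preserves holomorphy but does not by itself produce the required estimates either, as one checks already on the monomial $\xi_1\mu$ for $m=2$.

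The Grubb--Seeley route avoids real cutoffs altogether: it exploits the smoothness of $f$ at $\mu=0$ to Taylor-expand $f(\xi,\mu)=\sum_{j<m}\frac{\mu^j}{j!}\partial_\mu^j f(\xi,0)+r(\xi,\mu)$ (the $\bar\mu$-derivatives at $0$ vanish by continuity from the holomorphic region), so that the polynomial part is a finite sum of terms $\mu^j a_j(\xi)$ with $a_j\in\tS^{m-j}(\Rn,\cA^\infty)$ and the remainder $r$ vanishes to order $m$ at $\mu=0$, hence $\mu^{-m}r$ is smooth across $\mu=0$ and Part~(1) applies to it. The placement of the polynomial terms in $\tS^{m,0}+\tS^{0,m}$ then requires one further holomorphic device rather than a cutoff. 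If you want to repair your argument along cutoff lines, you would need a \emph{holomorphic} partition such as $1=\frac{\langle\xi\rangle^{2m}}{\langle\xi\rangle^{2m}+c\mu^{2m}}+\frac{c\mu^{2m}}{\langle\xi\rangle^{2m}+c\mu^{2m}}$ with the constant $c$ chosen so that the denominator is nonvanishing on the given sector; this preserves holomorphy and yields the splitting, but is a genuinely different construction from the smooth spatial cutoff you proposed.
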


\begin{proof}
This is the analogue of~\cite[Lem.\ 1.14]{GS:IM95} for $\cA^\infty$-valued symbols, and this can be proved in the exactly same way as in the proof of~\cite[Lem.\ 1.14]{GS:IM95}.
\end{proof}

Given a multi-index $\alpha$, let us denote $\partial_{B,\xi_1}^{\alpha_1}\cdots\partial_{B,\xi_n}^{\alpha_n}$ by $\partial_{B,\xi}^\alpha$ as in~\S\S\ref{subsec:symbols-and-PsiDOs}. Let $f(\xi,\mu)\in\tS^{\infty,d}(\Rn\times\Gamma,\cA^\infty)$, $f(\xi,\mu)\sim\sum_{j\geq 0}f_{m-j}(\xi,\mu)$ in $\tS^{\infty,d}(\Rn\times\Gamma,\cA^\infty)$, be a weakly polyhomogeneous symbol as in Definition~\ref{def:weakly.weakly-polyhomogeneous-symbols} and $\alpha\in\N_0^n$. In the same way of deriving~(\ref{eq:PsiDOs.twisted-partial-derivative-computation}) we get
\begin{equation} \label{eq:weakly.twisted-partial-derivative-computation}
f^{B,\alpha}(\xi,\mu) := \partial_x^\alpha|_{x=0} \alpha_{-x} \big( f(\xi+Bx,\mu) \big) = \sum_{j=0}^{|\alpha|} \sum_{\substack{\beta+\gamma = \alpha \\ |\gamma| = j}} \binom \alpha \beta (i\delta)^\beta \partial_{B,\xi}^\gamma f(\xi,\mu) . 
\end{equation}
We know by Remark~\ref{rem:weakly.weakly-polyhomogeneous-symbol-partial-derivative} that each summand $(i\delta)^\beta \partial_{B,\xi}^\gamma f(\xi,\mu)$ is weakly polyhomogeneous and
\begin{equation*} 
(i\delta)^\beta \partial_{B,\xi}^\gamma f(\xi,\mu)\sim\sum_{j\geq 0}(i\delta)^\beta \partial_{B,\xi}^\gamma f_{m-j}(\xi,\mu) \qquad \text{in $\tS^{\infty,d}(\Rn\times\Gamma,\cA^\infty)$} ,
\end{equation*}
in the sense of Definition~\ref{def:weakly.asymptotic-expansion-of-parametric-symbols}. Combining this with~(\ref{eq:weakly.twisted-partial-derivative-computation}) shows that $f^{B,\alpha}(\xi,\mu)\in\tS^{\infty,d}(\Rn\times\Gamma,\cA^\infty)$ is a weakly polyhomogeneous symbol such that $f^{B,\alpha}(\xi,\mu)\sim\sum_{j\geq 0}f_{m-j}^{B,\alpha}(\xi,\mu)$ in $\tS^{\infty,d}(\Rn\times\Gamma,\cA^\infty)$ in the sense of Definition~\ref{def:weakly.asymptotic-expansion-of-parametric-symbols}, where for each $j\geq 0$, $f_{m-j}^{B,\alpha}(\xi,\mu)$ is in $\tS^{m-j-d,d}(\Rn\times\Gamma,\cA^\infty)$ and homogeneous in $(\xi,\mu)$ for $|\xi|\geq 1$ of degree $m-j$ given by
\begin{equation} \label{eq:twisted-partial-derivative-homogeneous-parts}
f_{m-j}^{B,\alpha}(\xi,\mu) = \begin{cases} \displaystyle \sum_{k=0}^j \sum_{\substack{\beta+\gamma = \alpha \\ |\gamma| = k}} \binom \alpha \beta (i\delta)^\beta \partial_{B,\xi}^\gamma f_{m-j+k}(\xi,\mu) &\mbox{if } 0\leq j<|\alpha| \\
\displaystyle \sum_{k=0}^{|\alpha|} \sum_{\substack{\beta+\gamma = \alpha \\ |\gamma| = k}} \binom \alpha \beta (i\delta)^\beta \partial_{B,\xi}^\gamma f_{m-j+k}(\xi,\mu) & \mbox{if } j\geq|\alpha| \end{cases} .
\end{equation}

Summarizing the above discussion, we obtain the following lemma.


%
\begin{lemma} \label{lem:twisted-partial-derivative-of-weakly-parametric-symbol}
Let $f(\xi,\mu)\in\tS^{\infty,d}(\Rn\times\Gamma,\cA^\infty)$, $f(\xi,\mu)\sim\sum_{j\geq 0}f_{m-j}(\xi,\mu)$ in $\tS^{\infty,d}(\Rn\times\Gamma,\cA^\infty)$, be a weakly polyhomogeneous symbol as in Definition~\ref{def:weakly.weakly-polyhomogeneous-symbols} and $\alpha\in\N_0^n$. Then the symbol,
\begin{equation*}
f^{B,\alpha}(\xi,\mu) := \partial_x^\alpha|_{x=0}\alpha_{-x} \big( f(\xi+Bx,\mu) \big) ,
\end{equation*}
satisfies $f^{B,\alpha}(\xi,\mu)\sim\sum_{j\geq 0}f_{m-j}^{B,\alpha}(\xi,\mu)$ in $\tS^{\infty,d}(\Rn\times\Gamma,\cA^\infty)$, where, for each $j\geq 0$, $f_{m-j}^{B,\alpha}(\xi,\mu)$ is in $\tS^{m-j-d,d}(\Rn\times\Gamma,\cA^\infty)$ and homogeneous in $(\xi,\mu)$ for $|\xi|\geq 1$ of degree $m-j$ given by~(\ref{eq:twisted-partial-derivative-homogeneous-parts}). In particular, we have $f_m^{B,\alpha}(\xi,\mu) = (i\delta)^\alpha f_m(\xi,\mu)$.
\end{lemma}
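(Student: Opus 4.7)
The argument is the weakly-parametric analogue of Lemma~\ref{lem:PsiDOs.twisted-partial-derivative-of-classical-symbol}, and in fact most of the content is already written out in the paragraphs preceding the statement. I outline the plan in three steps, followed by the only nontrivial bookkeeping point.

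First, I would apply the Leibniz rule to $\partial_x^\alpha|_{x=0}\alpha_{-x}(f(\xi+Bx,\mu))$. The chain-rule identity $\partial_x^\gamma|_{x=0} h(\xi+Bx,\mu) = \partial_{B,\xi}^\gamma h(\xi,\mu)$ of~(\ref{eq:PsiDOs.twisted-higher-partial-derivative}) extends verbatim to the $\mu$-dependent setting, since $\mu\in\Gamma$ is a passive holomorphic parameter that does not interact with either $\alpha$ or $B$. Combined with the identity $\partial_x^\beta|_{x=0}\alpha_{-x}(a) = (i\delta)^\beta a$ (following the paper's sign convention for $\delta$), this produces the finite representation~(\ref{eq:weakly.twisted-partial-derivative-computation}) and reduces matters to analysing the finitely many summands $(i\delta)^\beta \partial_{B,\xi}^\gamma f(\xi,\mu)$.

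Second, I would invoke Remark~\ref{rem:weakly.weakly-polyhomogeneous-symbol-partial-derivative} term by term: since $\partial_{B,\xi_j}$ is a fixed real linear combination of the ordinary $\partial_{\xi_k}$ by~(\ref{eq:PsiDOs.twisted-partial-derivative-order-one}), and both $\delta$ and $\partial_\xi$ preserve weak polyhomogeneity and commute with the formation of asymptotic expansions, we obtain
\begin{equation*}
(i\delta)^\beta \partial_{B,\xi}^\gamma f(\xi,\mu) \sim \sum_{j\ge 0} (i\delta)^\beta \partial_{B,\xi}^\gamma f_{m-j}(\xi,\mu)
\end{equation*}
in $\tS^{\infty,d}(\Rn\times\Gamma,\cA^\infty)$, with the $j$th term lying in $\tS^{m-j-|\gamma|-d,d}$ and homogeneous of degree $m-j-|\gamma|$ in $(\xi,\mu)$ for $|\xi|\ge 1$. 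Summing the finitely many expansions over $\beta+\gamma=\alpha$ and regrouping by total homogeneity degree then gives the claimed expansion of $f^{B,\alpha}$: a contribution $(i\delta)^\beta\partial_{B,\xi}^\gamma f_{m-\ell}$ sits at degree $m-j$ precisely when $\ell+|\gamma|=j$, so setting $k=|\gamma|$ and $\ell=j-k$ with $0\le k\le\min(j,|\alpha|)$ reproduces exactly the piecewise formula~(\ref{eq:twisted-partial-derivative-homogeneous-parts}). The case $j=0$ forces $k=0$, whence $\gamma=0$ and $\beta=\alpha$, yielding the stated principal part $f_m^{B,\alpha}=(i\delta)^\alpha f_m$.

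The only point worth flagging is the regrouping in the last step: a finite sum of asymptotic expansions in the sense of Definition~\ref{def:weakly.asymptotic-expansion-of-parametric-symbols} can indeed be collapsed into a single weakly polyhomogeneous expansion, this being the weakly-parametric counterpart of Proposition~\ref{prop:PsiDOs.expanding-symbol-by-classical-symbols}. The justification is standard: truncating each inner expansion at a sufficiently high index $N$ leaves a remainder in $\tS^{m-N,d}(\Rn\times\Gamma,\cA^\infty)$, and for any fixed target $J<N$ the resulting degree-$(m-J)$ homogeneous piece is independent of the truncation level. I expect no essential obstacle beyond this verification, so the bulk of the work is indexing bookkeeping rather than analysis.
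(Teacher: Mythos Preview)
Your proposal is correct and follows essentially the same approach as the paper: the lemma is stated in the paper as a summary of the preceding discussion, which carries out exactly the Leibniz expansion~(\ref{eq:weakly.twisted-partial-derivative-computation}), appeals to Remark~\ref{rem:weakly.weakly-polyhomogeneous-symbol-partial-derivative} for each summand $(i\delta)^\beta\partial_{B,\xi}^\gamma f$, and then regroups by homogeneity degree to arrive at~(\ref{eq:twisted-partial-derivative-homogeneous-parts}). Your added remark on justifying the regrouping via a weakly-parametric analogue of Proposition~\ref{prop:PsiDOs.expanding-symbol-by-classical-symbols} is a reasonable clarification, though the paper treats this as routine.
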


The following result is immediate from Definition~\ref{def:weakly.weakly-polyhomogeneous-symbols}.

\begin{proposition} \label{prop:expanding-symbol-by-weakly-polyhomogeneous-symbols}
Let $f(\xi,\mu)\in\tS^{m,d}(\Rn\times\Gamma,\cA^\infty)$ be such that $f(\xi,\mu)\sim\sum_{\ell\geq 0}f^{(\ell)}(\xi,\mu)$ in $\tS^{\infty,d}(\Rn\times\Gamma,\cA^\infty)$, where, for each $\ell\geq 0$, $f^{(\ell)}(\xi,\mu)\in\tS^{m-\ell,d}(\Rn\times\Gamma,\cA^\infty)$, $f^{(\ell)}(\xi,\mu)\sim\sum_{j\geq 0}f_{m-\ell-j}^{(\ell)}(\xi,\mu)$ in $\tS^{\infty,d}(\Rn\times\Gamma,\cA^\infty)$, is a weakly polyhomogeneous symbol such that for each $j\geq 0$, $f_{m-\ell-j}^{(\ell)}(\xi,\mu)$ is in $\tS^{m-\ell-j-d,d}(\Rn\times\Gamma,\cA^\infty)$ and homogeneous in $(\xi,\mu)$ of degree $m-\ell-j$ for $|\xi|\geq 1$. Then $f(\xi,\mu)$ is weakly polyhomogeneous as well and we have $f(\xi,\mu)\sim\sum_{j\geq 0}f_{m-j}(\xi,\mu)$, where, for each $j\geq 0$, $f_{m-j}(\xi,\mu)$ is defined by 
\begin{equation*}
f_{m-j}(\xi,\mu) := \sum_{\ell\leq j}f_{m-j}^{(\ell)}(\xi,\mu) , \qquad j\geq 0 .
\end{equation*}
Furthermore, $f_{m-j}(\xi,\mu)$ belongs to $\tS^{m-j-d,d}(\Rn\times\Gamma,\cA^\infty)$ and is homogeneous in $(\xi,\mu)$ of degree $m-j$ for $|\xi|\geq 1$.
\end{proposition}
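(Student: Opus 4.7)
The plan is to verify the claim by a standard diagonal argument. First I would check that the candidate homogeneous component $f_{m-j}(\xi,\mu):=\sum_{\ell\leq j}f^{(\ell)}_{m-j}(\xi,\mu)$ is well-defined as a \emph{finite} sum of elements of $\tS^{m-j-d,d}(\Rn\times\Gamma,\cA^\infty)$, each of which is homogeneous in $(\xi,\mu)$ of degree $m-j$ for $|\xi|\geq 1$. Since $\tS^{m-j-d,d}(\Rn\times\Gamma,\cA^\infty)$ is a vector space and homogeneity of the same degree is preserved under addition, $f_{m-j}$ inherits both the symbol membership and the homogeneity statement. This takes care of the last sentence of the proposition.

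The main work is then to prove $f(\xi,\mu)\sim\sum_{j\geq 0}f_{m-j}(\xi,\mu)$ in the sense of Definition~\ref{def:weakly.asymptotic-expansion-of-parametric-symbols}. For this I would fix $N\geq 1$ and rearrange the partial sum by swapping the order of summation and substituting $k=j-\ell$:
\begin{equation*}
\sum_{j<N}f_{m-j}(\xi,\mu)
=\sum_{j<N}\sum_{\ell\leq j}f^{(\ell)}_{m-j}(\xi,\mu)
=\sum_{\ell<N}\sum_{k=0}^{N-1-\ell}f^{(\ell)}_{m-\ell-k}(\xi,\mu).
\end{equation*}
Adding and subtracting $\sum_{\ell<N}f^{(\ell)}$ then splits the remainder as
\begin{equation*}
f-\sum_{j<N}f_{m-j}
=\Bigl(f-\sum_{\ell<N}f^{(\ell)}\Bigr)+\sum_{\ell<N}\Bigl(f^{(\ell)}-\sum_{k<N-\ell}f^{(\ell)}_{m-\ell-k}\Bigr).
\end{equation*}

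By the outer expansion $f\sim\sum_\ell f^{(\ell)}$ the first parenthesis lies in $\tS^{m-N,d}(\Rn\times\Gamma,\cA^\infty)$, and by the inner expansion for each $\ell<N$ the $\ell$-th summand of the second parenthesis lies in $\tS^{m-\ell-(N-\ell),d}=\tS^{m-N,d}(\Rn\times\Gamma,\cA^\infty)$. A finite sum of elements of the same symbol space remains in that space, so the whole remainder belongs to $\tS^{m-N,d}(\Rn\times\Gamma,\cA^\infty)$, which is precisely the condition of Definition~\ref{def:weakly.asymptotic-expansion-of-parametric-symbols}. There is no real obstacle here; the only step requiring a little care is the index bookkeeping in the double sum, and everything else is formal once the correct candidate for $f_{m-j}$ has been identified.
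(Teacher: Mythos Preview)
Your argument is correct; the diagonal rearrangement and the splitting of the remainder into the outer tail $f-\sum_{\ell<N}f^{(\ell)}$ plus the inner tails $f^{(\ell)}-\sum_{k<N-\ell}f^{(\ell)}_{m-\ell-k}$ is precisely the standard way one collapses a double asymptotic expansion into a single one, and the index bookkeeping you carry out is accurate. The paper itself gives no proof of this proposition---it merely declares the result ``immediate from Definition~\ref{def:weakly.weakly-polyhomogeneous-symbols}''---so your proposal in fact supplies the details that the authors chose to omit.
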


We also have the following result.

\begin{proposition} \label{prop:weakly-polyhomogeneous-symbol-product}
Let $f(\xi,\mu)\in\tS^{\infty,d}(\Rn\times\Gamma,\cA^\infty)$, $f(\xi,\mu)\sim\sum_{p\geq 0}f_{m-p}(\xi,\mu)$, and $g(\xi,\mu)\in\tS^{\infty,d'}(\Rn\times\Gamma,\cA^\infty)$, $g(\xi,\mu)\sim\sum_{r\geq 0}g_{m'-r}(\xi,\mu)$ be weakly polyhomogeneous. Then $f(\xi,\mu)g(\xi,\mu)$ is weakly polyhomogeneous and we have $f(\xi,\mu)g(\xi,\mu)\sim\sum_{j\geq 0} (fg)_{m+m'-j}(\xi,\mu)$ in $\tS^{\infty,d+d'}(\Rn\times\Gamma,\cA^\infty)$, where $(fg)_{m+m'-j}(\xi,\mu)\in\tS^{m+m'-j-d-d',d+d'}(\Rn\times\Gamma,\cA^\infty)$, $j\geq 0$, are homogeneous symbols in $(\xi,\mu)$ for $|\xi|\geq 1$ of degree $m+m'-j$ given by
\begin{equation*}
(fg)_{m+m'-j}(\xi,\mu) = \sum_{p+r=j}f_{m-p}(\xi,\mu)g_{m'-r}(\xi,\mu) .
\end{equation*}
\end{proposition}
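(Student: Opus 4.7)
The plan is to verify the two claims---that $fg$ is weakly polyhomogeneous and that its homogeneous decomposition is the naive Cauchy product---by directly adapting the standard argument for products of classical symbols, carefully tracking the two indices $(m,d)$ throughout.

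First, I would apply Lemma~\ref{lem:weakly.pointwise-product-continuity} to conclude $fg\in\tS^{m+m'-d-d',d+d'}(\Rn\times\Gamma,\cA^\infty)$. Next, I would check the candidate homogeneous pieces. For each pair $(p,r)$, the same lemma places $f_{m-p}\,g_{m'-r}\in\tS^{m+m'-p-r-d-d',d+d'}$, while pointwise multiplication of two maps that are homogeneous of degrees $m-p$ and $m'-r$ in $(\xi,\mu)$ for $|\xi|\ge 1$ yields a map homogeneous of degree $m+m'-p-r$ for $|\xi|\ge 1$. Summing over $p+r=j$ then shows that each
\[
(fg)_{m+m'-j}(\xi,\mu)=\sum_{p+r=j}f_{m-p}(\xi,\mu)\,g_{m'-r}(\xi,\mu)
\]
lies in $\tS^{m+m'-j-d-d',d+d'}$ and is homogeneous of degree $m+m'-j$ for $|\xi|\ge 1$, matching the statement.

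The core step is to verify the asymptotic expansion $fg\sim\sum_{j\ge 0}(fg)_{m+m'-j}$ in the sense of Definition~\ref{def:weakly.asymptotic-expansion-of-parametric-symbols}. For each $N$ I would split $f=F_N+R_N^f$ and $g=G_N+R_N^g$ with $F_N=\sum_{p<N}f_{m-p}$, $G_N=\sum_{r<N}g_{m'-r}$, and remainders $R_N^f\in\tS^{m-N-d,d}$, $R_N^g\in\tS^{m'-N-d',d'}$. Expanding $fg=F_NG_N+F_NR_N^g+R_N^fG_N+R_N^fR_N^g$ and subtracting the partial sum $\sum_{p+r<N}f_{m-p}g_{m'-r}$, the residue is
\[
\sum_{\substack{p,r<N\\ p+r\ge N}}f_{m-p}\,g_{m'-r}\;+\;F_NR_N^g\;+\;R_N^fG_N\;+\;R_N^fR_N^g.
\]
Lemma~\ref{lem:weakly.pointwise-product-continuity}, combined with the continuous inclusion $\tS^{\ell,d}\subset\tS^{\ell',d}$ for $\ell\le\ell'$, shows every summand lies in $\tS^{m+m'-N-d-d',d+d'}$: the diagonal terms because $p+r\ge N$, the mixed terms because a homogeneous factor of order at most $m-d$ (resp.\ $m'-d'$) multiplies a remainder of order $m'-N-d'$ (resp.\ $m-N-d$), and the last term because it has order $m+m'-2N-d-d'$.

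I do not anticipate a substantive obstacle, as this is essentially a direct transcription of the classical Cauchy-product argument and all the necessary ingredients are already in place from Lemma~\ref{lem:weakly.pointwise-product-continuity} and the definitions. The only delicate point is the two-index bookkeeping---distinguishing the ``order'' index from the ``$\mu$-power'' index and checking that the $\mu$-power is exactly conserved under multiplication---but this is built into the statement of Lemma~\ref{lem:weakly.pointwise-product-continuity}, so no separate argument is needed.
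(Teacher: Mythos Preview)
Your proposal is correct and follows essentially the same approach as the paper's proof: both truncate $f$ and $g$ at level $N$, multiply the truncations, and use Lemma~\ref{lem:weakly.pointwise-product-continuity} together with the inclusion $\tS^{\ell,d}\subset\tS^{\ell',d}$ for $\ell\le\ell'$ to show that every cross term and off-diagonal term lands in $\tS^{m+m'-N-d-d',d+d'}$. Your version is slightly more explicit in writing out the four-term decomposition $F_NG_N+F_NR_N^g+R_N^fG_N+R_N^fR_N^g$, whereas the paper compresses this into a single ``mod'' statement, but the substance is identical.
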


\begin{proof}
By the very definition of weakly polyhomogeneous symbols, we have
\begin{gather*}
f(\xi,\mu) = \sum_{p<N}f_{m-p}(\xi,\mu) \quad \text{mod} \,\, \tS^{m-N-d,d}(\Rn\times\Gamma,\cA^\infty) \\
g(\xi,\mu) = \sum_{r<N}g_{m'-r}(\xi,\mu) \quad \text{mod} \,\, \tS^{m'-N-d',d'}(\Rn\times\Gamma,\cA^\infty) .
\end{gather*}
From this we get
\begin{equation} \label{eq:weakly-polyhomogeneous-symbol-product-computation}
f(\xi,\mu)g(\xi,\mu) = \sum_{p,r<N}f_{m-p}(\xi,\mu)g_{m'-r}(\xi,\mu) \quad \text{mod} \,\, \tS^{m+m'-N-d-d',d+d'}(\Rn\times\Gamma,\cA^\infty) .
\end{equation}
As $f_{m-p}(\xi,\mu)$ (resp., $g_{m'-r}(\xi,\mu)$) is homogeneous in $(\xi,\mu)$ for $|\xi|\geq 1$ of degree $m-p$ (resp., degree $m'-r$), the symbol,
\begin{equation*}
(fg)_{m+m'-j}(\xi,\mu) := \sum_{p+r=j}f_{m-p}(\xi,\mu)g_{m'-r}(\xi,\mu)\in\tS^{m+m'-j-d-d',d+d'}(\Rn\times\Gamma,\cA^\infty) ,
\end{equation*}
is homogeneous in $(\xi,\mu)$ for $|\xi|\geq 1$ of degree $m+m'-j$ for all $j\geq 0$. Furthermore, since $f_{m-p}(\xi,\mu)\in\tS^{m-p-d,d}(\Rn\times\Gamma,\cA^\infty)$ and $g_{m'-r}(\xi,\mu)\in\tS^{m'-r-d',d'}(\Rn\times\Gamma,\cA^\infty)$ we see that $f_{m-p}(\xi,\mu)g_{m'-r}(\xi,\mu)$ belongs to $\tS^{m+m'-N-d-d',d+d'}(\Rn\times\Gamma,\cA^\infty)$ when $p+r\geq N$. Combining this with~(\ref{eq:weakly-polyhomogeneous-symbol-product-computation}) we see that, for all $N\geq 1$, we have
\begin{align*}
f(\xi,\mu)g(\xi,\mu) &= \sum_{j<N}\sum_{p+r=j}f_{m-p}(\xi,\mu)g_{m'-r}(\xi,\mu) \quad \text{mod} \,\, \tS^{m+m'-N-d-d',d+d'}(\Rn\times\Gamma,\cA^\infty) \\
&= \sum_{j<N}(fg)_{m+m'-j}(\xi,\mu) \quad \text{mod} \,\, \tS^{m+m'-N-d-d',d+d'}(\Rn\times\Gamma,\cA^\infty) .
\end{align*}
This shows that $f(\xi,\mu)g(\xi,\mu)$ is weakly polyhomogeneous and $f(\xi,\mu)g(\xi,\mu)\sim\sum_{j\geq 0}(fg)_{m+m'-j}(\xi,\mu)$ in $\tS^{\infty,d+d'}(\Rn\times\Gamma,\cA^\infty)$ in the sense of Definition~\ref{def:weakly.asymptotic-expansion-of-parametric-symbols}. The proof is complete.
\end{proof}

The following theorem is the analogue of~\cite[Thm.\ 1.12]{GS:IM95}.

\begin{theorem} \label{thm:asymptotic-expansion-of-symbol-in-lambda}
Let $m,d\in\R$. For $f\in\tS^{m,d}(\Rn\times\Gamma,\cA^\infty)$, set
\begin{equation*}
f_{(d,k)}(\xi) = \frac{1}{k!}\partial_z^k\Big( z^d f\big( \xi,\frac{1}{z} \big) \Big)|_{z=0} .
\end{equation*}
Then $f_{(d,k)}\in\tS^{m+k}(\Rn,\cA^\infty)$, and for any $N$, we have
\begin{equation*}
f(\xi,\mu) - \sum_{0\leq k<N} \mu^{d-k} f_{(d,k)}(\xi)\in\tS^{m+N,d-N}(\Rn\times\Gamma,\cA^\infty) .
\end{equation*}
\end{theorem}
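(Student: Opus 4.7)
The strategy is to recast the asymptotic in $\mu\to\infty$ as a Taylor expansion at $z=0$ of the auxiliary function
\begin{equation*}
g(\xi,z) := z^d f(\xi,\tfrac{1}{z}) ,
\end{equation*}
which is defined for $1/z\in\Gamma$. By Definition~\ref{def:weakly.weakly-parametric-symbols}, for every $j\geq 0$ the derivative $\partial_z^j g(\cdot,z)$ belongs to $\tS^{m+j}(\Rn,\cA^\infty)$ with uniform semi-norm bounds for $|z|\leq 1$ and $1/z$ in closed subsectors of $\Gamma$. These uniform estimates guarantee that $\partial_z^j g(\xi,z)$ admits a limit as $z\to 0$ (in the Fr\'echet topology of $\tS^{m+j}(\Rn,\cA^\infty)$), so the Taylor coefficients are well defined and $f_{(d,k)}(\xi)=\frac{1}{k!}\partial_z^k g(\xi,z)|_{z=0}$ already lies in $\tS^{m+k}(\Rn,\cA^\infty)$, proving the first assertion.

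Next I apply Taylor's theorem with integral remainder in $z$ along the segment from $0$ to $z$:
\begin{equation*}
g(\xi,z) = \sum_{k<N} f_{(d,k)}(\xi)\, z^k + \frac{z^N}{(N-1)!} \int_0^1 (1-s)^{N-1} \partial_z^N g(\xi,sz)\, ds .
\end{equation*}
Substituting $z=1/\mu$ and multiplying through by $\mu^d$ gives
\begin{equation*}
f(\xi,\mu) - \sum_{0\leq k<N} \mu^{d-k} f_{(d,k)}(\xi) = \mu^{d-N}\, h_N(\xi,\mu) ,
\end{equation*}
where
\begin{equation*}
h_N(\xi,\mu) := \frac{1}{(N-1)!}\int_0^1 (1-s)^{N-1} (\partial_z^N g)(\xi, s/\mu)\, ds .
\end{equation*}
The theorem then reduces to showing $h_N\in\tS^{m+N,0}(\Rn\times\Gamma,\cA^\infty)$, since $\tS^{m+N,d-N}=\mu^{d-N}\tS^{m+N,0}$.

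To verify this, set $\mu=1/z$ and differentiate $j$ times under the integral sign,
\begin{equation*}
\partial_z^j\bigl[h_N(\xi,\tfrac{1}{z})\bigr] = \frac{1}{(N-1)!} \int_0^1 (1-s)^{N-1} s^j\, (\partial_z^{N+j} g)(\xi, sz)\, ds .
\end{equation*}
For $1/z$ in a closed subsector $\Gamma'\subset\Gamma$ and $|z|\leq 1$ we have $|sz|\leq 1$; moreover, since $\Gamma'$ is a sector (closed under positive scaling) and $1/(sz)=s^{-1}(1/z)$, the point $1/(sz)$ still lies in $\Gamma'$. Hence the standing uniform bound on $\partial_z^{N+j}g$ provides a semi-norm estimate in $\tS^{m+N+j}(\Rn,\cA^\infty)$ that is uniform in $s\in(0,1]$ (and extends continuously to $s=0$ by the same argument used for $f_{(d,k)}$). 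Since the integrand $(1-s)^{N-1}s^j$ is integrable and the $\tS^{m+N+j}$ semi-norms pull through the integral, we obtain $\partial_z^j[h_N(\cdot,\tfrac{1}{z})]\in\tS^{m+N+j}(\Rn,\cA^\infty)$ with the required uniformity. This shows $h_N\in\tS^{m+N,0}(\Rn\times\Gamma,\cA^\infty)$ and completes the proof.

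The only genuinely delicate point is the \emph{uniformity} in the scaling $s\in[0,1]$: one must verify that when $1/\mu$ sits in a closed subsector of $\Gamma$, the shifted argument $s/\mu$ stays in the \emph{same} closed subsector for all $s$, so that the uniform symbol estimates built into Definition~\ref{def:weakly.weakly-parametric-symbols} actually apply. This is precisely why the parameter domain is taken to be a sector rather than an arbitrary open set.
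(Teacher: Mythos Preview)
Your argument is correct and is precisely the Taylor-expansion-with-integral-remainder approach of Grubb--Seeley \cite[Thm.\ 1.12]{GS:IM95}, which is exactly what the paper's proof invokes (the paper simply observes that the scalar proof carries over verbatim to $\cA^\infty$-valued symbols). In particular your careful check that $1/(sz)$ remains in the same closed subsector for all $s\in(0,1]$, together with the continuous extension to $s=0$, is the heart of the matter and matches the original.
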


\begin{proof}
In the proof of~\cite[Thm.\ 1.12]{GS:IM95}, the authors drop the variable $x$ for notational simplicity. If we regard $p(\xi,\frac{1}{z})$ in~\cite[Thm.\ 1.12]{GS:IM95} as an $\cA^\infty$-valued symbol in $\tS^{m,d}(\Rn\times\Gamma,\cA^\infty)$, then the proof of~\cite[Thm.\ 1.12]{GS:IM95} holds \emph{verbatim} in the case of $\cA^\infty$-valued symbols.
\end{proof}
Given $f\in\tS^{m,d}(\Rn\times\Gamma,\cA^\infty)$, $m,d\in\R$, we define the \emph{parametric pseudodifferential multiplier} associated with $f$ by
\begin{equation*}
(P_f u)(x) = \left( \int_{\Rn} f^\vee(y,\mu) \bU_y u \, dy \right) (x) , \qquad u\in\cS(\Rn,\cA^\infty) .
\end{equation*}
Here $f^\vee(y,\mu)$ is the inverse Fourier transform of $f(\xi,\mu)$ in the variable $\xi$.

Let $f\in\tS^{m,d}(\Rn\times\Gamma,\cA^\infty)$ and $g\in\tS^{m',d'}(\Rn\times\Gamma,\cA^\infty)$. For each $\mu\in\Gamma$, let $f\sharp g(\xi,\mu)$ be the composition product of $f(\xi,\mu)$ and $g(\xi,\mu)$ given by~(\ref{eq:PsiDOs.composition-formula}). As mentioned in~\cite{GS:IM95} the composition rule for a pseudodifferential calculus extends to the weakly parametric calculus in a straightforward way.  
The following theorems are the weakly parametric versions of Theorem~\ref{thm:PsiDOs.composition-formula} and Theorem~\ref{thm:PsiDOs.composition-formula-classical-symbols}.

\begin{theorem} \label{thm:weakly.composition-formula}
Let $f\in\tS^{m,d}(\Rn\times\Gamma,\cA^\infty)$ and $g\in\tS^{m',d'}(\Rn\times\Gamma,\cA^\infty)$. Then $f\sharp g(\xi,\mu)$ belongs to $\tS^{m+m',d+d'}(\Rn\times\Gamma,\cA^\infty)$ and we have $P_fP_g = P_{f\sharp g}$. Furthermore, $f\sharp g$ has the asymptotic expansion,
\begin{equation*}
f\sharp g(\xi,\mu)\sim\sum_\alpha \frac{(-i)^{|\alpha|}}{\alpha !} \partial_\xi^\alpha f(\xi,\mu) \partial_x^\alpha|_{x=0} \Big( \alpha_{-x} \big( g(\xi+Bx,\mu) \big) \Big) \quad \text{in $\tS^{\infty,d+d'}(\Rn\times\Gamma,\cA^\infty)$},
\end{equation*}
in the sense of Definition~\ref{def:weakly.asymptotic-expansion-of-parametric-symbols}.
\end{theorem}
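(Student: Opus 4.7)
The plan is to reduce everything to the non-parametric composition formula of Theorem~\ref{thm:PsiDOs.composition-formula} via the substitution $\mu = 1/z$ and then to propagate the weakly parametric estimates by exploiting the continuous bilinearity of $\sharp$ together with the Leibniz rule for $\partial_z$. Set $F(\xi,z) := z^d f(\xi,1/z)$ and $G(\xi,z) := z^{d'} g(\xi,1/z)$. Definition~\ref{def:weakly.weakly-parametric-symbols} gives $\partial_z^j F(\cdot,z)\in \tS^{m+j}(\Rn,\cA^\infty)$ and $\partial_z^j G(\cdot,z)\in \tS^{m'+j}(\Rn,\cA^\infty)$ uniformly for $|z|\leq 1$ with $1/z$ in closed subsectors of $\Gamma$, and in particular $z\mapsto F(\cdot,z)$, $z\mapsto G(\cdot,z)$ are smooth as maps into the respective Fréchet symbol spaces.

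Since $\sharp$ is linear in each argument and scalar factors that do not depend on $\xi$ pull through, one has
\[
z^{d+d'}(f\sharp g)(\xi,1/z) = \bigl(F(\cdot,z)\sharp G(\cdot,z)\bigr)(\xi).
\]
Combining the continuous bilinearity of $\sharp$ from $\tS^m(\Rn,\cA^\infty)\times\tS^{m'}(\Rn,\cA^\infty)$ to $\tS^{m+m'}(\Rn,\cA^\infty)$ (Theorem~\ref{thm:PsiDOs.composition-formula}(1)) with the Leibniz rule in the Fréchet setting yields
\[
\partial_z^j\bigl[F(\cdot,z)\sharp G(\cdot,z)\bigr] = \sum_{k=0}^{j}\binom{j}{k}\,\partial_z^{j-k}F(\cdot,z)\sharp \partial_z^k G(\cdot,z),
\]
with each summand in $\tS^{m+m'+j}(\Rn,\cA^\infty)$ uniformly in $z$. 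This establishes $f\sharp g\in \tS^{m+m',d+d'}(\Rn\times\Gamma,\cA^\infty)$; holomorphy in $\mu$ is preserved because the oscillatory integral defining $\sharp$ commutes with $\partial_{\bar\mu}$. The operator identity $P_f P_g = P_{f\sharp g}$ then follows by applying Theorem~\ref{thm:PsiDOs.composition-formula}(2) pointwise in $\mu$.

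For the asymptotic expansion, set
\[
R_N(\xi,\mu) := f\sharp g(\xi,\mu) - \sum_{|\alpha|<N} \frac{(-i)^{|\alpha|}}{\alpha!}\,\partial_\xi^\alpha f(\xi,\mu)\,\partial_x^\alpha\bigl|_{x=0}\bigl(\alpha_{-x}(g(\xi+Bx,\mu))\bigr).
\]
Each expansion term lies in $\tS^{m+m'-|\alpha|,d+d'}(\Rn\times\Gamma,\cA^\infty)$ by Lemma~\ref{lem:weakly.pointwise-product-continuity}, combined with the fact that $\partial_\xi^\alpha$ lowers the first index by $|\alpha|$ and that the operations $\delta^\beta$ and $\partial_{B,\xi}^\gamma$ making up $\partial_x^\alpha|_{x=0}(\alpha_{-x}(g(\xi+Bx,\mu)))$ act continuously on the weakly parametric symbol spaces. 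To upgrade the pointwise-in-$\mu$ statement $R_N(\cdot,\mu)\in\tS^{m+m'-N}(\Rn,\cA^\infty)$ from Theorem~\ref{thm:PsiDOs.composition-formula}(3) to the weakly parametric claim $R_N\in \tS^{m+m'-N,d+d'}(\Rn\times\Gamma,\cA^\infty)$, I would extract from the proof of Theorem~\ref{thm:PsiDOs.composition-formula}(3) the additional fact that $(f,g)\mapsto R_N$ is a continuous bilinear map $\tS^m\times\tS^{m'}\to \tS^{m+m'-N}$; the very same substitution identity $z^{d+d'} R_N(\xi,1/z) = R_N(F(\cdot,z),G(\cdot,z))(\xi)$ and Leibniz argument as above then place $\partial_z^j$ of this expression uniformly in $\tS^{m+m'-N+j}(\Rn,\cA^\infty)$.

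The main obstacle is the continuity of the remainder $R_N$ as a bilinear map, which is not explicitly recorded in Theorem~\ref{thm:PsiDOs.composition-formula} but should follow by inspecting the standard integral Taylor remainder in the proof of the composition formula (compare~\cite[Prop.~7.5]{HLP:Part2}). A secondary technical point is the justification of differentiation under the oscillatory integral defining $\sharp$ so that $\partial_z$ commutes with both the $\eta$-integration and the $\alpha_{-y}$ twist; this is handled by the standard estimates for $\cA^\infty$-valued oscillatory integrals recalled in~\cite{HLP:Part1}. Once these two points are in place the rest of the argument is routine bookkeeping of orders in $\xi$ and of uniformity in $z$ over closed subsectors, entirely analogous to the untwisted treatment in~\cite{GS:IM95}.
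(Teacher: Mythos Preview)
Your proposal is correct and is precisely the argument the paper has in mind: the paper does not spell out a proof of this theorem but simply remarks that the composition rule extends to the weakly parametric calculus ``in a straightforward way'' as in~\cite{GS:IM95}, and the Grubb--Seeley mechanism is exactly the substitution $\mu=1/z$, the identity $z^{d+d'}(f\sharp g)(\xi,1/z)=(F(\cdot,z)\sharp G(\cdot,z))(\xi)$, and the Leibniz rule combined with the continuous bilinearity of $\sharp$ (and of the Taylor remainder $R_N$) furnished by Theorem~\ref{thm:PsiDOs.composition-formula}. The one point you flag---that the continuity of $(f,g)\mapsto R_N$ into $\tS^{m+m'-N}$ is used but not stated in Theorem~\ref{thm:PsiDOs.composition-formula}---is real but harmless: it follows from the oscillatory-integral proof in~\cite{HLP:Part2} exactly as you indicate.
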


\begin{theorem} \label{thm:weakly.composition-formula-weakly-polyhomogeneous}
Let $f(\xi,\mu)\in\tS^{\infty,d}(\Rn\times\Gamma,\cA^\infty)$, $f(\xi,\mu)\sim\sum_{j\geq 0}f_{m-j}(\xi,\mu)$, and $g(\xi,\mu)\in\tS^{\infty,d'}(\Rn\times\Gamma,\cA^\infty)$, $g(\xi,\mu)\sim\sum_{j\geq 0}g_{m'-j}(\xi,\mu)$ be weakly polyhomogeneous. Then $f\sharp g(\xi,\mu)$ is weakly polyhomogeneous and we have $P_fP_g = P_{f\sharp g}$. Furthermore, it admits the asymptotic expansion $f\sharp g(\xi,\mu)\sim\sum_{j\geq 0}(f\sharp g)_{m+m'-j}(\xi,\mu)$, where
\begin{equation} \label{eq:weakly-parametric-calculus-weakly-polyhomogeneous-symbol-product-homogeneous-parts}
(f\sharp g)_{m+m'-j}(\xi,\mu) = \sum_{k+l+|\alpha| = j} \frac{(-i)^{|\alpha|}}{\alpha !} \partial_\xi^\alpha f_{m-k}(\xi,\mu) g_{m'-l}^{B,\alpha}(\xi,\mu) , \qquad j\geq 0 .
\end{equation}
Here $g_{m'-l}^{B,\alpha}(\xi,\mu)$ is given as in~(\ref{eq:twisted-partial-derivative-homogeneous-parts}). In particular, we have
\begin{equation} \label{eq:weakly-parametric-calculus-weakly-polyhomogeneous-symbol-product-principal-part}
(f\sharp g)_{m+m'}(\xi,\mu) = f_m(\xi,\mu) g_{m'}(\xi,\mu) .
\end{equation}
\end{theorem}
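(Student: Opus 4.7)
The plan is to mirror the proof of Theorem~\ref{thm:PsiDOs.composition-formula-classical-symbols} step by step, with each of the three classical ingredients replaced by its weakly parametric counterpart. First, I would invoke Theorem~\ref{thm:weakly.composition-formula} to obtain $f\sharp g(\xi,\mu)\in\tS^{m+m',d+d'}(\Rn\times\Gamma,\cA^\infty)$ together with the (non-polyhomogeneous) asymptotic expansion
\begin{equation*}
f\sharp g(\xi,\mu)\sim\sum_\alpha \frac{(-i)^{|\alpha|}}{\alpha !}\, \partial_\xi^\alpha f(\xi,\mu)\, g^{B,\alpha}(\xi,\mu)\quad \text{in }\tS^{\infty,d+d'}(\Rn\times\Gamma,\cA^\infty),
\end{equation*}
where $g^{B,\alpha}(\xi,\mu):=\partial_x^\alpha|_{x=0}\alpha_{-x}(g(\xi+Bx,\mu))$. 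The equality $P_fP_g=P_{f\sharp g}$ is already furnished by the same theorem, so the remaining task is purely symbolic: to identify the homogeneous components of each summand and then assemble them.

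Next, I would analyze each term $\partial_\xi^\alpha f\cdot g^{B,\alpha}$ individually. By Remark~\ref{rem:weakly.weakly-polyhomogeneous-symbol-partial-derivative}, $\partial_\xi^\alpha f(\xi,\mu)$ is weakly polyhomogeneous with expansion $\sum_{k\geq 0}\partial_\xi^\alpha f_{m-k}(\xi,\mu)$, and by Lemma~\ref{lem:twisted-partial-derivative-of-weakly-parametric-symbol}, $g^{B,\alpha}(\xi,\mu)$ is weakly polyhomogeneous with expansion $\sum_{l\geq 0}g^{B,\alpha}_{m'-l}(\xi,\mu)$, where the $g^{B,\alpha}_{m'-l}$ are given by~(\ref{eq:twisted-partial-derivative-homogeneous-parts}). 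Proposition~\ref{prop:weakly-polyhomogeneous-symbol-product} then yields
\begin{equation*}
\partial_\xi^\alpha f(\xi,\mu)\, g^{B,\alpha}(\xi,\mu)\sim\sum_{s\geq 0}\sum_{k+l=s}\partial_\xi^\alpha f_{m-k}(\xi,\mu)\, g^{B,\alpha}_{m'-l}(\xi,\mu)\quad\text{in }\tS^{\infty,d+d'},
\end{equation*}
with each summand $\partial_\xi^\alpha f_{m-k}\, g^{B,\alpha}_{m'-l}$ homogeneous in $(\xi,\mu)$ of degree $m+m'-|\alpha|-k-l$ for $|\xi|\geq 1$ and lying in $\tS^{m+m'-|\alpha|-k-l-d-d',d+d'}(\Rn\times\Gamma,\cA^\infty)$.

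Finally, I would feed these expansions into Proposition~\ref{prop:expanding-symbol-by-weakly-polyhomogeneous-symbols}, regarding the full expansion from the first step as an expansion of $f\sharp g$ into weakly polyhomogeneous symbols (indexed so that the $\alpha$-th term has principal degree $m+m'-|\alpha|$). Re-summing by total degree, i.e.\ grouping all contributions of homogeneity $m+m'-j$, gives
\begin{equation*}
(f\sharp g)_{m+m'-j}(\xi,\mu)=\sum_{k+l+|\alpha|=j}\frac{(-i)^{|\alpha|}}{\alpha!}\,\partial_\xi^\alpha f_{m-k}(\xi,\mu)\, g^{B,\alpha}_{m'-l}(\xi,\mu),
\end{equation*}
which is~(\ref{eq:weakly-parametric-calculus-weakly-polyhomogeneous-symbol-product-homogeneous-parts}). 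The identity~(\ref{eq:weakly-parametric-calculus-weakly-polyhomogeneous-symbol-product-principal-part}) drops out by taking $j=0$, which forces $k=l=|\alpha|=0$ and hence $g^{B,0}_{m'}=g_{m'}$.

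The main obstacle is bookkeeping rather than a conceptual hurdle: one must verify that the regrouping by total homogeneity degree is compatible with the asymptotic ordering in $\tS^{\infty,d+d'}(\Rn\times\Gamma,\cA^\infty)$, i.e.\ that only finitely many $(k,l,\alpha)$ contribute to each homogeneous piece and that the tail fits into $\tS^{m+m'-N-d-d',d+d'}(\Rn\times\Gamma,\cA^\infty)$ for every $N$. This is precisely what Proposition~\ref{prop:expanding-symbol-by-weakly-polyhomogeneous-symbols} is designed to handle, so once the homogeneity degrees of the individual summands $\partial_\xi^\alpha f_{m-k}\, g^{B,\alpha}_{m'-l}$ are correctly tracked, the argument concludes without further analytic input.
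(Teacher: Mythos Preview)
Your proposal is correct and follows exactly the approach the paper intends: the paper does not spell out a separate proof for Theorem~\ref{thm:weakly.composition-formula-weakly-polyhomogeneous} but presents it as the weakly parametric version of Theorem~\ref{thm:PsiDOs.composition-formula-classical-symbols}, and your argument is precisely the proof of the latter with each ingredient (Theorem~\ref{thm:PsiDOs.composition-formula}, Remark~\ref{rem:PsiDOs.classical-symbol-partial-derivative}, Lemma~\ref{lem:PsiDOs.twisted-partial-derivative-of-classical-symbol}, Proposition~\ref{prop:PsiDOs.classical-symbol-product}, Proposition~\ref{prop:PsiDOs.expanding-symbol-by-classical-symbols}) replaced by its weakly parametric counterpart (Theorem~\ref{thm:weakly.composition-formula}, Remark~\ref{rem:weakly.weakly-polyhomogeneous-symbol-partial-derivative}, Lemma~\ref{lem:twisted-partial-derivative-of-weakly-parametric-symbol}, Proposition~\ref{prop:weakly-polyhomogeneous-symbol-product}, Proposition~\ref{prop:expanding-symbol-by-weakly-polyhomogeneous-symbols}).
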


\section{Asymptotic Expansions of Resolvents} \label{sec:Resolvents}
Let $\psi$ be an $\alpha$-invariant continuous trace on $\cA$. Then there is a natural trace $\Tr_\psi$ on the algebra of pseudodifferential multipliers of order $<-n$, which is given by~(\ref{eq:PsiDOs.natural-trace-on-PsiDOs}). In this section, we derive the asymptotic expansion of the trace $\Tr_\psi$ of weakly parametric pseudodifferential multipliers in the given parameter and apply it to derive the asymptotic expansion of the resolvent of a pseudodifferential multiplier which is elliptic with parameter.

\subsection{Asymptotic expansions of weakly parametric pseudodifferential multipliers}
In order to derive the asymptotic expansion of the trace of a weakly parametric pseudodifferential multiplier we need the following lemma.

\begin{lemma}[{\cite[Lem.\ 2.3]{GS:IM95}}] \label{lem:not-depending-on-the-argument}
Let $f(\mu)$ be a holomorphic function on a sector and suppose that
\begin{equation*}
f(\mu) = c(\theta) (re^{i\theta})^j \log^k(re^{i\theta}) + o \Big( r^j \log^k \Big( \frac{1}{r} \Big) \Big ) \qquad \text{as $r\rightarrow 0$} ,
\end{equation*}
where $r$ and $\theta$ are the modulus and argument of $\mu$, respectively. Then $c(\theta)$ is independent of $\theta$.
\end{lemma}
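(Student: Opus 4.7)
The plan is to combine Cauchy's theorem with the asymptotic hypothesis, reading off that the coefficient $c(\theta)$ of the leading asymptotic cannot depend on $\theta$.

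First, fix a holomorphic branch of the logarithm on the sector and introduce
\[
g(\mu):=\frac{f(\mu)}{\mu^{j}\,(\log\mu)^{k}},
\]
which is holomorphic for $|\mu|$ small (the denominator does not vanish near $0$). Since $|\log r+i\theta|^{k}\sim\log^{k}(1/r)$ as $r\to 0$, the hypothesized expansion translates into
\[
g(re^{i\theta})=c(\theta)+o(1)\qquad\text{as } r\to 0
\]
along each ray in the sector.

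Next, fix two angles $\theta_{1}<\theta_{2}$ in the sector and choose $R>0$ so small that the closed annular sector $\{r\le|\mu|\le R,\ \theta_{1}\le\arg\mu\le\theta_{2}\}$ lies in the domain of $g$. Applying Cauchy's theorem to $g(\mu)/\mu$ along its boundary and expanding the four pieces (two radial segments and two circular arcs) yields the identity
\[
\int_{r}^{R}\bigl[g(te^{i\theta_{1}})-g(te^{i\theta_{2}})\bigr]\,\frac{dt}{t}=i\int_{\theta_{1}}^{\theta_{2}}\bigl[g(re^{i\theta})-g(Re^{i\theta})\bigr]\,d\theta.
\]

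Now let $r\to 0$. On the left, substituting $g(te^{i\theta_{j}})=c(\theta_{j})+o(1)$ gives
\[
\text{LHS}=\bigl(c(\theta_{1})-c(\theta_{2})\bigr)\log(R/r)+o\bigl(\log(1/r)\bigr),
\]
since $\int_{r}^{R}o(1)\,dt/t=o(\log(1/r))$. On the right, the term with $R$ is $r$-independent, while for the term with $r$ one uses that in the weakly parametric setting the $o(\cdot)$-remainder in the hypothesis is uniform in $\theta$ on closed subsectors of $\Gamma$ (automatic from Definition~\ref{def:weakly.weakly-parametric-symbols}); dominated convergence then yields $i\int_{\theta_{1}}^{\theta_{2}}g(re^{i\theta})d\theta\to i\int_{\theta_{1}}^{\theta_{2}}c(\theta)d\theta$. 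Hence $\text{RHS}=O(1)$. Dividing the identity by $\log(1/r)$ and sending $r\to 0$ forces $c(\theta_{1})=c(\theta_{2})$, as required.

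The main obstacle is controlling the $\theta$-dependence of the $o(\cdot)$-error so that the $\theta$-integrals behave well in the limit $r\to 0$. In the weakly parametric setting this uniformity is automatic from the symbol class definition, and the argument above is clean. A purely pointwise formulation would require an additional boundedness or Lindel\"of-type argument to replace dominated convergence; since our application only needs the uniform version, we use it directly.
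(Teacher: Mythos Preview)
The paper does not supply its own proof of this lemma; it is quoted verbatim from \cite[Lem.\ 2.3]{GS:IM95} and used as a black box in the proof of Theorem~\ref{thm:asymptotic-expansion-of-the-trace-of-weakly-parametric-psido}. So there is no in-paper argument to compare against.

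Your argument is sound. Passing to $g(\mu)=f(\mu)/(\mu^{j}(\log\mu)^{k})$ and applying Cauchy's theorem to $g(\mu)/\mu$ over the annular sector is exactly the right mechanism: the radial legs produce $(c(\theta_1)-c(\theta_2))\log(R/r)+o(\log(1/r))$, while the circular legs stay bounded, and dividing by $\log(1/r)$ forces the coefficients to agree. The computation of the contour identity and the treatment of the $o(1)$-integral $\int_r^R o(1)\,dt/t=o(\log(1/r))$ are both correct.

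You are also right to flag the one genuine subtlety: the lemma as \emph{stated} only assumes pointwise radial asymptotics, whereas your bound on the inner arc uses uniformity in $\theta$ on closed subsectors. Your resolution---observing that in every application within this paper the function arises from a weakly parametric symbol, so the uniformity is built into Definition~\ref{def:weakly.weakly-parametric-symbols}---is the pragmatic and correct move here. If one wanted the lemma in the bare generality of its statement, a Phragm\'en--Lindel\"of argument would indeed be needed to upgrade pointwise to locally uniform control; Grubb--Seeley's original proof handles this, but for the purposes of this paper your version is entirely adequate.
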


We are now in a position to get the asymptotic expansion of the trace of a weakly parametric pseudodifferential multiplier.

\begin{theorem} \label{thm:asymptotic-expansion-of-the-trace-of-weakly-parametric-psido}
Let $f(\xi,\mu)\in\tS^{\infty,d}(\Rn\times\Gamma,\cA^\infty)$, $f(\xi,\mu)\sim\sum_{j\geq 0}f_{m-j}(\xi,\mu)$ be weakly polyhomogeneous. Furthermore, assume that $f(\xi,\mu)$ and $f_{m-j}(\xi,\mu)$ with $m-j-d\geq -n$ are in $\tS^{m',d'}(\Rn\times\Gamma,\cA^\infty)$ with $m'<-n$. Then we have an asymptotic expansion,
\begin{equation} \label{eq:asymptotic-expansion-of-the-trace-of-weakly-parametric-psido}
\Tr_\psi(P_{f(\cdot,\mu)}) \sim \sum_{j=0}^\infty c_j \mu^{m-j+n} + \sum_{k=0}^\infty (c_k' \log\mu + c_k'')\mu^{-k+d} .
\end{equation}
\end{theorem}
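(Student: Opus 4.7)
The plan is to adapt~\cite[Thm.\ 2.1]{GS:IM95} to the $\cA^\infty$-valued setting. Because the hypothesis forces $f(\cdot,\mu)\in\tS^{m',d'}$ with $m'<-n$, the trace formula~(\ref{eq:PsiDOs.natural-trace-on-PsiDOs}) applies and gives
$$\Tr_\psi(P_{f(\cdot,\mu)}) = \int_{\Rn}\psi(f(\xi,\mu))\dbar\xi.$$
The task is therefore to produce an asymptotic expansion of this scalar $\xi$-integral as $\mu\to\infty$ in closed subsectors of $\Gamma$; no structure beyond holomorphy in $\mu$ and the weakly parametric symbol calculus developed above is needed for this.

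I would first truncate the weakly polyhomogeneous expansion: write $f = \sum_{j<N} f_{m-j} + r_N$ with $r_N\in\tS^{m-N,d}$, and apply Theorem~\ref{thm:asymptotic-expansion-of-symbol-in-lambda} to expand $r_N(\xi,\mu) = \sum_{k<K}\mu^{d-k}r_{N,(d,k)}(\xi) + s_{N,K}(\xi,\mu)$ with $s_{N,K}\in\tS^{m-N+K,d-K}$. Choosing $K$ first and then $N$ so large that $m-N+K<-n$, the tail $\int\psi(s_{N,K}(\xi,\mu))\dbar\xi$ is $O(\mu^{d-K})$ uniformly, while each term $\mu^{d-k}\int\psi(r_{N,(d,k)}(\xi))\dbar\xi$ contributes a genuine power $\mu^{d-k}$. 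This reduces the problem, modulo arbitrarily low-order errors, to analyzing the $N$ integrals $\int\psi(f_{m-j}(\xi,\mu))\dbar\xi$ for $j<N$.

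For each homogeneous component I would use a cutoff $\chi\in C^\infty(\Rn)$ vanishing on $|\xi|\leq 1/2$ and equal to $1$ on $|\xi|\geq 1$. On $\supp\chi$ the symbol $f_{m-j}$ is strictly homogeneous of degree $m-j$ in $(\xi,\mu)$, so passing to polar coordinates $\xi=\rho\omega$, using $f_{m-j}(\rho\omega,\mu) = \rho^{m-j}f_{m-j}(\omega,\mu/\rho)$ for $\rho\geq 1$, and then rescaling $\rho = t|\mu|$ converts the $\chi$-piece into
$$|\mu|^{m-j+n}\int_{1/|\mu|}^\infty t^{m-j+n-1}\int_{S^{n-1}}\psi\bigl(f_{m-j}(\omega,e^{i\theta}/t)\bigr)\,d\omega\,dt,$$
up to a factor $(2\pi)^{-n}$. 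Expanding this in $|\mu|$, using the smoothness of $f_{m-j}(\omega,\cdot)$ near $0$, yields the pure power $c_j\mu^{m-j+n}$ together with the terms $(c_k'\log\mu+c_k'')\mu^{d-k}$ corresponding to resonances $m-j+n\in\{d,d-1,\ldots\}$. The leftover piece $(1-\chi)f_{m-j}$ is supported in a compact $\xi$-region where a direct application of Theorem~\ref{thm:asymptotic-expansion-of-symbol-in-lambda} extracts further $\mu^{d-k}$-contributions. Summing over $j$ and regrouping gives~(\ref{eq:asymptotic-expansion-of-the-trace-of-weakly-parametric-psido}).

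The hard part will be showing that the coefficients $c_j,c_k',c_k''$ are intrinsic: independent of $N$, $K$, the cutoff $\chi$ and, most delicately, of $\theta=\arg\mu$. A priori the construction produces only $\theta$-dependent coefficients $c_j(\theta),c_k'(\theta)$, but since $\mu\mapsto\Tr_\psi(P_{f(\cdot,\mu)})$ is holomorphic on $\Gamma$, Lemma~\ref{lem:not-depending-on-the-argument} forces $\theta$-independence. The hypothesis that the homogeneous components with $m-j-d\geq-n$ already lie in a class $\tS^{m',d'}$ with $m'<-n$ is used exactly to make the finitely many term-by-term integrations in the truncation step well-defined, handling the otherwise problematic ``low'' homogeneous pieces. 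Apart from this, every step is a direct transcription of~\cite[\S 2]{GS:IM95}, since the scalar $\psi$-image of the symbol is all that enters the integral.
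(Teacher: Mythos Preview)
Your proposal is correct and follows essentially the same Grubb--Seeley template as the paper: truncate the polyhomogeneous expansion, handle the remainder via Theorem~\ref{thm:asymptotic-expansion-of-symbol-in-lambda}, analyze each homogeneous piece by scaling in polar coordinates, and invoke Lemma~\ref{lem:not-depending-on-the-argument} for $\theta$-independence. The only cosmetic difference is that the paper uses a sharp three-region split $\{|\xi|\le 1\}\cup\{1\le|\xi|\le|\mu|\}\cup\{|\xi|\ge|\mu|\}$ (and first reduces to $d=0$) rather than your smooth cutoff $\chi$ plus rescaling $\rho=t|\mu|$, but the computations are equivalent.
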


\begin{proof} 
First, suppose that $d=0$. Given $J\in\N_0$, set $r_J(\xi,\mu) := f(\xi,\mu) - \sum_{0\leq j<J}f_{m-j}(\xi,\mu)$. Here we adapt the convention $r_0(\xi,\mu) = f(\xi,\mu)$. As $\psi$ is a continuous trace on the $C^*$-algebra $\cA$, there is $C>0$ such that
\begin{equation*}
|\psi(a)| \leq C \|a\| \qquad \forall a\in\cA ,
\end{equation*}
where $\|\cdot\|$ is the norm on the $C^*$-algebra $\cA$. Thus, we have
\begin{gather}
\label{eq:remainder-term-trace-estimate} |\psi(r_J(\xi,\mu))| \leq C \|r_J(\xi,\mu)\| \qquad \forall(\xi,\mu)\in\Rn\times\Gamma \\
\label{eq:homogeneous-terms-trace-estimate} |\psi(f_{m-j}(\xi,\mu))| \leq C \|f_{m-j}(\xi,\mu)\| \qquad \forall(\xi,\mu)\in\Rn\times\Gamma \,\, \forall j\geq 0 .
\end{gather}
By combining this with the assumption we see that, for each $\mu\in\Gamma$, $\psi(r_J(\xi,\mu))$ is integrable in $\xi$ and so are $\psi(f_{m-j}(\xi,\mu))$ for all $j\geq 0$.

Now consider the remainder $r_J(\xi,\mu)$. Using Theorem~\ref{thm:asymptotic-expansion-of-symbol-in-lambda} we get
\begin{equation*}
r_J(\xi,\mu) - \sum_{0\leq\nu<N}s_\nu(\xi)\mu^{-\nu} \in \tS^{m-J+N,-N}(\Rn\times\Gamma,\cA^\infty) ,
\end{equation*}
with $s_\nu(\xi)\in\tS^{m-J+\nu}(\Rn,\cA^\infty)$. Combining this with~(\ref{eq:remainder-term-trace-estimate}), for any $N$, we obtain
\begin{equation*}
\psi(r_J(\xi,\mu)) = \sum_{0\leq\nu<N}\psi(s_\nu(\xi))\mu^{-\nu} + O(\brak{\xi}^{m-J+N}\mu^{-N}) .
\end{equation*}
Given any $N$, choosing $J$ such that $m-J+N<-n$ ensures the integrability of each term $\psi(s_\nu(\xi))$. It then follows that
\begin{equation} \label{eq:trace-of-psido-associated-with-remainder-symbol}
\Tr_\psi(P_{r_J(\cdot,\mu)}) = \int_{\Rn} \psi(r_J(\xi,\mu)) \dbar\xi = \sum_{0\leq\nu<N}c_{NJ\nu}\mu^{-\nu} +O(\mu^{-N}) .
\end{equation}
Here the coefficients $c_{NJ\nu}$, which contribute to $c_k''$ in~(\ref{eq:asymptotic-expansion-of-the-trace-of-weakly-parametric-psido}), is defined by $\int_{\Rn}\psi(s_\nu(\xi))\dbar\xi$.

In order to compute the contribution of the homogeneous terms $f_{m-j}(\xi,\mu)$ as in the proof of~\cite[Thm.\ 2.1]{GS:IM95}, let us write $\Tr_\psi(P_{f_{m-j}(\cdot,\mu)})$ as the sum of three integrals as follows:
\begin{align}
\nonumber \Tr_\psi(P_{f_{m-j}(\cdot,\mu)}) &= \int_{\Rn}\psi(f_{m-j}(\xi,\mu)) \dbar\xi\\
&= \int_{|\xi|\geq|\mu|} \psi(f_{m-j}(\xi,\mu)) \dbar\xi + \int_{|\xi|\leq 1} \psi(f_{m-j}(\xi,\mu)) \dbar\xi + \int_{1\leq|\xi|\leq|\mu|} \psi(f_{m-j}(\xi,\mu)) \dbar\xi . \label{eq:splitting-the-integral-of-homogeneous-term}
\end{align}
For $|\mu|\geq 1$, by using the homogeneity of $f_{m-j}(\xi,\mu)$ for $|\xi|\geq 1$, we get
\begin{equation} \label{eq:the-first-integral-of-homogeneous-term}
\int_{|\xi|\geq|\mu|} \psi(f_{m-j}(\xi,\mu)) \dbar\xi = \mu^{m-j+n}\int_{|\xi|\geq 1} \Big( \frac{|\mu|}{\mu} \Big)^{m-j+n} \psi \Big( f_{m-j} \big( \xi,\frac{\mu}{|\mu|} \big) \Big) \dbar\xi .`
\end{equation}
This term will contribute to $c_j$ in~(\ref{eq:asymptotic-expansion-of-the-trace-of-weakly-parametric-psido}). Note that, at first glance, the right-hand side seems to depend on $\mu/|\mu|$, but this is not the case because $f_{m-j}(\xi,\mu)$ is holomorphic in $\mu$ and hence $\psi(f_{m-j}(\xi,\mu))$ is a holomorphic function in $\mu$ (\cf\ Lemma~\ref{lem:not-depending-on-the-argument}).

To compute the contribution of the second integral in~(\ref{eq:splitting-the-integral-of-homogeneous-term}), we apply Theorem~\ref{thm:asymptotic-expansion-of-symbol-in-lambda} to $f_{m-j}(\xi,\mu)$. Then we obtain
\begin{equation} \label{eq:homogeneous-symbol-expansion-in-parameter}
f_{m-j}(\xi,\mu) = \sum_{0\leq\nu<M}\mu^{-\nu}q_\nu(\xi) + R_M(\xi,\mu) .
\end{equation}
Here $q_\nu(\xi) := \frac{1}{\nu!}\partial_z^\nu f_{m-j}(\xi,\frac{1}{z})|_{z=0}$ is in $\tS^{m-j+\nu}(\Rn,\cA^\infty)$ and homogeneous of degree $m-j+\nu$ for $|\xi|\geq 1$. Furthermore, we also have
\begin{equation} \label{eq:remainder-degree-estimate}
R_M(\xi,\mu) = O(\brak{\xi}^{m-j+M}\mu^{-M}) , \qquad |\xi|\geq 1 .
\end{equation}
Thus, for the second term in~(\ref{eq:splitting-the-integral-of-homogeneous-term}), we get
\begin{equation} \label{eq:the-second-integral-of-homogeneous-term}
\int_{|\xi|\leq 1}\psi(f_{m-j}(\xi,\mu)) \dbar\xi = \sum_{0\leq\nu<M}\mu^{-\nu} \int_{|\xi|\leq 1} \psi(q_\nu(\xi)) \dbar\xi + O(\mu^{-M}) .
\end{equation}
Given any $N$ and its consequent choice of $J$ such that $m-J+N<-n$, we use the expansion~(\ref{eq:the-second-integral-of-homogeneous-term}) with $M\geq N$ to $f_{m-j}(\xi,\mu)$ for each $0\leq j<J$. This yields $J$ contributions to $c_k''$ for each $0\leq k<N$ in~(\ref{eq:asymptotic-expansion-of-the-trace-of-weakly-parametric-psido}).

We utilize the expansion~(\ref{eq:homogeneous-symbol-expansion-in-parameter}) again to compute the contribution of the third term in~(\ref{eq:splitting-the-integral-of-homogeneous-term}). Choose $M$ such that $M>-m+j-n$. As alluded to earlier $q_\nu(\xi)$ is homogeneous of degree $m-j+\nu$ for $|\xi|\geq 1$. Thus, by changing variables to polar coordinates we obtain
\begin{align}
\nonumber \mu^{-\nu}\int_{1\leq|\xi|\leq|\mu|} \psi(q_\nu(\xi))\dbar\xi &= \mu^{-\nu}c_\nu \int_1^{|\mu|} r^{m-j+\nu+n-1}dr \\
&= \begin{cases} \mu^{-\nu}c_\nu'(|\mu|^{m-j+\nu+n} - 1) &\mbox{if } m-j+\nu+n\neq 0 \\
\mu^{-\nu}c_\nu'\log{|\mu|} &\mbox{if } m-j+\nu+n=0 \end{cases} . \label{eq:third-integral-asymptotic-expansion-contribution-computation}
\end{align}
In~(\ref{eq:homogeneous-symbol-expansion-in-parameter}), note that, since $f_{m-j}(\xi,\mu)$ is homogeneous of degree $m-j$ in $(\xi,\mu)$ for $|\xi|\geq 1$ and each $q_\nu(\xi)$ is homogeneous of degree $m-j+\nu$ in $\xi$ for $|\xi|\geq 1$, it follows that $R_M(\xi,\mu)$ is homogeneous of degree $m-j$ in $(\xi,\mu)$ for $|\xi|\geq 1$. Let $R_M^h$ denote the extension of $R_M$ by homogeneity. Then, by using~(\ref{eq:remainder-degree-estimate}) we see that we have $R_M^h(\xi,\mu) = O(\brak{\xi}^{m-j+M}\mu^{-M})$ for all $\xi\neq 0$. This, together with the homogeneity of $R_M^h$ and the assumption $m-j+M>-n$, implies that
\begin{equation*}
\int_{|\xi|\leq|\mu|} \psi(R_M^h(\xi,\mu))\dbar\xi = c''\mu^{m-j+n} \quad \text{and} \quad \int_{|\xi|\leq 1} \psi(R_M^h(\xi,\mu))\dbar\xi = O(\mu^{-M}) ,
\end{equation*}
and hence
\begin{align*}
\int_{1\leq|\xi|\leq|\mu|} \psi(R_M(\xi,\mu))\dbar\xi &= \int_{|\xi|\leq|\mu|} \psi(R_M^h(\xi,\mu))\dbar\xi - \int_{|\xi|\leq 1} \psi(R_M^h(\xi,\mu))\dbar\xi \\
&= c''\mu^{m-j+n} - O(\mu^{-M}) .
\end{align*}
Combining this with~(\ref{eq:third-integral-asymptotic-expansion-contribution-computation}), the expansion~(\ref{eq:homogeneous-symbol-expansion-in-parameter}) and Lemma~\ref{lem:not-depending-on-the-argument} shows that the third integral in~(\ref{eq:splitting-the-integral-of-homogeneous-term}) can be written in the form,
\begin{equation} \label{eq:the-third-integral-of-homogeneous-term}
\int_{1\leq|\xi|\leq|\mu|} \psi(f_{m-j}(\xi,\mu)) \dbar\xi = (c+c'\log{\mu})\mu^{m-j+n} + \sum_{0\leq\nu<M}c_\nu\mu^{-\nu} + O(\mu^{-M}) .
\end{equation}
Here $c' = 0$ unless $m-j+n$ is an integer $\leq 0$.

Combining~(\ref{eq:the-first-integral-of-homogeneous-term}), (\ref{eq:the-second-integral-of-homogeneous-term}) and~(\ref{eq:the-third-integral-of-homogeneous-term}) we get
\begin{align}
\nonumber \Tr_\psi(P_{f_{m-j}(\cdot,\mu)}) &= \int_{\Rn}\psi(f_{m-j}(\xi,\mu)) \dbar\xi \\
&= c_j\mu^{m-j+n} + c_j'\mu^{m-j+n}\log{\mu} + \sum_{0\leq\nu<M}c_{j\nu}\mu^{-\nu} + O(\mu^{-M}) . \label{eq:asymptotic-expansion-of-the-trace-of-psido-associated-with-homogeneous-symbol}
\end{align}
By choosing $J$ such that $m-J+N<-n$ and $M$ satisfying $M\geq N$, the expansion~(\ref{eq:asymptotic-expansion-of-the-trace-of-weakly-parametric-psido}) for $d=0$ can be derived by using~(\ref{eq:trace-of-psido-associated-with-remainder-symbol}) and~(\ref{eq:asymptotic-expansion-of-the-trace-of-psido-associated-with-homogeneous-symbol}). This proves the theorem for $d=0$.

The result~(\ref{eq:asymptotic-expansion-of-the-trace-of-weakly-parametric-psido}) for general $d\in\R$ is immediate from the above proof since we can write $f(\xi,\mu) = \mu^d f'(\xi,\mu)$ and the asymptotic expansion of $f'(\xi,\mu)$ can be obtained from the above computation in the case $d=0$. This completes the proof.
\end{proof}

\subsection{Resolvents}
\begin{definition} \label{def:resolvents.elliptic-with-parameter}
Let $f\in\tS^m(\Rn,\cA^\infty)$, $f(\xi)\sim\sum_{j\geq 0}f_{m-j}(\xi)$ be a polyhomogeneous symbol. We say that $f$ is \emph{elliptic with parameter} $\mu\in\Gamma$ if it is elliptic of order $m$ and $f_m(\xi)-\mu^m$ is invertible in $\cA^\infty$ for all $\mu\in\Gamma$ and $|\xi|=1$. We also say that a pseudodifferential multiplier $P$ is \emph{elliptic with parameter} if it is associated with a polyhomogeneous symbol which is elliptic with parameter.
\end{definition}

\begin{theorem} \label{thm:parametrix-existence}
Let $m$ be a positive integer and a polyhomogeneous symbol $f\in\tS^m(\Rn,\cA^\infty)$, $f(\xi)\sim\sum_{j\geq 0}f_{m-j}(\xi)$ be elliptic with parameter. Then there is a weakly polyhomogeneous symbol $g(\xi,\mu)\in\tS^{-m,0}(\Rn\times\Gamma,\cA^\infty)\cap\tS^{0,-m}(\Rn\times\Gamma,\cA^\infty)$ such that:
\begin{enumerate}
\item $g(\xi,\mu)$ has the asymptotic expansions,
\begin{gather}
\label{eq:resolvent-symbol-expansion} g(\xi,\mu)\sim\sum_{j\geq 0}g_{-m-j}(\xi,\mu) \quad \text{in} \,\, \tS^{-m,0}(\Rn\times\Gamma,\cA^\infty) \cap \tS^{0,-m}(\Rn\times\Gamma,\cA^\infty) , \\
\label{eq:resolvent-symbol-without-principal-parti-expansion} g(\xi,\mu) - g_{-m}(\xi,\mu)\sim\sum_{j\geq 1}g_{-m-j}(\xi,\mu) \quad \text{in} \,\, \tS^{-m-1,0}(\Rn\times\Gamma,\cA^\infty) \cap \tS^{m-1,-2m}(\Rn\times\Gamma,\cA^\infty) .
\end{gather}
Here the homogeneous parts $g_{-m}(\xi,\mu)\in\tS^{-m,0}(\Rn\times\Gamma,\cA^\infty) \cap \tS^{0,-m}(\Rn\times\Gamma,\cA^\infty)$ and $g_{-m-j}(\xi,\mu)\in\tS^{-m-j,0}(\Rn\times\Gamma,\cA^\infty) \cap \tS^{m-j,-2m}(\Rn\times\Gamma,\cA^\infty)$, $j\geq 1$, are given by
\begin{gather}
\label{eq:resolvent-principal-part} g_{-m}(\xi,\mu) = (f_m(\xi)-\mu^m)^{-1} , \\
\label{eq:resolvent-jth-homogeneous-part} g_{-m-j}(\xi,\mu) = -\sum_{\substack{ k+l+|\alpha| = j \\ l<j }} \frac{(-i)^{|\alpha|}}{\alpha !} (f_m(\xi)-\mu^m)^{-1}\partial_\xi^\alpha f_{m-k}(\xi) g_{-m-l}^{B,\alpha}(\xi,\mu) , \qquad j\geq 1 .
\end{gather}
\item We have
\begin{equation*}
(f-\mu^m)\sharp g - 1 \in \tS^{-\infty,-m}(\Rn\times\Gamma,\cA^\infty) .
\end{equation*}
\end{enumerate}
\end{theorem}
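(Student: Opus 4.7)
The plan is a standard parametrix construction adapted to the weakly parametric framework: define the homogeneous parts $g_{-m-j}$ recursively so that the composition $(f-\mu^m)\sharp g$ becomes $1$ to infinite order, sum the series via Borel's lemma, and read off the residue from Theorem~\ref{thm:weakly.composition-formula-weakly-polyhomogeneous}. I would begin by setting $g_{-m}(\xi,\mu):=(f_m(\xi)-\mu^m)^{-1}$ on $\{(|\xi|^2+|\mu|^2)^{1/2}\geq 1\}\cap(\Rn\times\Gamma)$. Ellipticity with parameter together with the homogeneity of $f_m$ produces an inverse in $\cA^\infty$ (use that $\cA^\infty$ is closed under holomorphic functional calculus as in the proof of Lemma~\ref{lem:boundedness.hol-func-cal-of-symbols}), and a smooth extension across the origin yields an $\cA^\infty$-valued symbol that is holomorphic in $\mu$ and homogeneous of degree $-m$ in $(\xi,\mu)$ for $|\xi|\geq 1$. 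Since $-m<0$, Lemma~\ref{lem:weakly.homogeneous-symbol-is-weakly-parametric}(1) places $g_{-m}$ in $\tS^{-m,0}\cap\tS^{0,-m}$.

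For $j\geq 1$, I would define $g_{-m-j}$ inductively by formula~(\ref{eq:resolvent-jth-homogeneous-part}), where each $g_{-m-l}^{B,\alpha}$ is obtained from the previously constructed $g_{-m-l}$ as in Lemma~\ref{lem:twisted-partial-derivative-of-weakly-parametric-symbol}. The resulting $g_{-m-j}$ is homogeneous of degree $-m-j$ in $(\xi,\mu)$ for $|\xi|\geq 1$, so Lemma~\ref{lem:weakly.homogeneous-symbol-is-weakly-parametric}(1) immediately gives $g_{-m-j}\in\tS^{-m-j,0}$. For the second inclusion $g_{-m-j}\in\tS^{m-j,-2m}$ I would track the factors of $h:=(f_m-\mu^m)^{-1}$: each summand in~(\ref{eq:resolvent-jth-homogeneous-part}) carries an explicit $h$ in front and, because $g_{-m-l}^{B,\alpha}$ inherits at least one factor of $h$ from $g_{-m-l}$, at least one further $h$. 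Since $h\in\tS^{-m,0}\cap\tS^{0,-m}$, I can, via Lemma~\ref{lem:weakly.pointwise-product-continuity}, allocate exactly two of the $h$-factors in each summand to the class $\tS^{0,-m}$ (contributing $-2m$ to the $d$-index) and treat the remaining $h$'s, together with the purely $\xi$-dependent $\partial_\xi^\alpha f_{m-k}\in\tS^{m-k-|\alpha|,0}$-factors, in the $\tS^{\star,0}$-classes; this places the product in $\tS^{m-j,-2m}$, with the $\xi$-order $m-j$ forced by the overall homogeneity of degree $-m-j$.

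With all homogeneous parts in hand, I would apply the weakly parametric Borel lemma (Lemma~\ref{lem:weakly.Borel-lemma}), choosing cut-offs sharp enough to make the remainders satisfy the estimates required in both~(\ref{eq:resolvent-symbol-expansion}) and~(\ref{eq:resolvent-symbol-without-principal-parti-expansion}) simultaneously. For the residue estimate in~(2), Theorem~\ref{thm:weakly.composition-formula-weakly-polyhomogeneous} delivers the weakly polyhomogeneous expansion of $(f-\mu^m)\sharp g$, and by~(\ref{eq:weakly-parametric-calculus-weakly-polyhomogeneous-symbol-product-homogeneous-parts}) its degree-$(-j)$ homogeneous part equals $(f_m-\mu^m)g_{-m-j}$ plus exactly the sum that~(\ref{eq:resolvent-jth-homogeneous-part}) was designed to cancel, collapsing to $1$ for $j=0$ and to $0$ for $j\geq 1$. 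Therefore $(f-\mu^m)\sharp g-1$ has a vanishing full asymptotic expansion in the relevant weakly polyhomogeneous class, forcing it to lie in $\tS^{-\infty,-m}$.

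The main obstacle is the inclusion $g_{-m-j}\in\tS^{m-j,-2m}$. It does not follow from the homogeneity of degree $-m-j$ alone: Lemma~\ref{lem:weakly.homogeneous-symbol-is-weakly-parametric}(1) only yields $\tS^{-m-j,0}\cap\tS^{0,-m-j}$, a class incomparable with $\tS^{m-j,-2m}$ unless $j=m$. The allocation trick --- exploiting simultaneously the two class memberships of the single symbol $h$ on different factors within each product --- is where the specific multiplicative structure of formula~(\ref{eq:resolvent-jth-homogeneous-part}) becomes essential, and it is this bookkeeping that I expect to be the most delicate part of the argument.
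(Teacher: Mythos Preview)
Your proposal is correct and follows essentially the same route as the paper: recursive definition of the $g_{-m-j}$, verification of symbol-class memberships by induction, Borel summation via Lemma~\ref{lem:weakly.Borel-lemma}, and the vanishing of all homogeneous components of $(f-\mu^m)\sharp g-1$ via Theorem~\ref{thm:weakly.composition-formula-weakly-polyhomogeneous}. The paper packages your allocation trick more cleanly by strengthening the induction hypothesis to the triple membership $g_{-m-l}\in\tS^{-m-l,0}\cap\tS^{-l,-m}\cap\tS^{m-l,-2m}$ for $1\le l<j$: the auxiliary class $\tS^{-l,-m}$ is precisely the rigorous encoding of your claim that $g_{-m-l}^{B,\alpha}$ ``inherits one factor of $h$'', and combined with the explicit prefactor $h\in\tS^{0,-m}$ and $\partial_\xi^\alpha f_{m-k}\in\tS^{m-k-|\alpha|,0}$ a single application of Lemma~\ref{lem:weakly.pointwise-product-continuity} yields $g_{-m-j}\in\tS^{m-j,-2m}$ without ever expanding products. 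One small caveat: homogeneity of degree $-m-j$ does not by itself pin down the first index in $\tS^{a,-2m}$, but the product-of-classes computation already produces $a=m-j$, so your closing appeal to homogeneity is superfluous rather than wrong.
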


\begin{proof}
First, we shall prove that $g_{-m-j}(\xi,\mu)$, $j\geq 0$, defined as in~(\ref{eq:resolvent-principal-part})--(\ref{eq:resolvent-jth-homogeneous-part}) belongs to $\tS^{-m,0}(\Rn\times\Gamma,\cA^\infty) \cap \tS^{0,-m}(\Rn\times\Gamma,\cA^\infty)$ for $j=0$ and belongs to $\tS^{-m-j,0}(\Rn\times\Gamma,\cA^\infty)\cap\tS^{m-j,-2m}(\Rn\times\Gamma,\cA^\infty)$ for $j\geq 1$. As $m$ is a positive integer we know by Lemma~\ref{lem:weakly.homogeneous-symbol-is-weakly-parametric} that $g_{-m}(\xi,\mu) = (f_m(\xi)-\mu^m)^{-1}$ belongs to $\tS^{-m,0}(\Rn\times\Gamma,\cA^\infty)\cap\tS^{0,-m}(\Rn\times\Gamma,\cA^\infty)$.

Now we proceed by induction to show that $g_{-m-j}(\xi,\mu)\in\tS^{-m-j,0}(\Rn\times\Gamma,\cA^\infty) \cap \tS^{m-j,-2m}(\Rn\times\Gamma,\cA^\infty)$ for all $j\geq 1$. Combining the result $g_{-m}(\xi,\mu) = (f_m(\xi)-\mu^m)^{-1}$ belongs to $\tS^{-m,0}(\Rn\times\Gamma,\cA^\infty)\cap\tS^{0,-m}(\Rn\times\Gamma,\cA^\infty)$ with the fact $\partial_\xi^\alpha f_{m-k}(\xi)\in\tS^{m-k-|\alpha|,0}(\Rn\times\Gamma,\cA^\infty)$ for all $\alpha\in\N_0^n$ and $g_{-m}^{B,\alpha}(\xi,\mu) = g_{-m}(\xi,\mu)$ for all $\alpha\in\N_0^n$ (\cf\ (\ref{eq:twisted-partial-derivative-homogeneous-parts})) shows that
\begin{equation*}
g_{-m-1}(\xi,\mu)\in\tS^{-m-1,0}(\Rn\times\Gamma,\cA^\infty) \cap \tS^{-1,-m}(\Rn\times\Gamma,\cA^\infty) \cap \tS^{m-1,-2m}(\Rn\times\Gamma,\cA^\infty) .
\end{equation*}
Suppose that, given an integer $j>1$, the symbols $g_{-m-l}(\xi,\mu)$ in~(\ref{eq:resolvent-jth-homogeneous-part}) belongs to $\tS^{-m-l,0}(\Rn\times\Gamma,\cA^\infty) \cap \tS^{-l,-m}(\Rn\times\Gamma,\cA^\infty) \cap \tS^{m-l,-2m}(\Rn\times\Gamma,\cA^\infty)$ for all $l<j$. Then~(\ref{eq:twisted-partial-derivative-homogeneous-parts}) implies that, for all $\alpha\in\N_0^n$, $g_{-m-l}^{B,\alpha}(\xi,\mu)$ belongs to the same symbol space. Furthermore, we also know that $(f_m(\xi)-\mu^m)^{-1}\in\tS^{-m,0}(\Rn\times\Gamma,\cA^\infty) \cap \tS^{0,-m}(\Rn\times\Gamma,\cA^\infty)$ and $\partial_\xi^\alpha f_{m-k}(\xi)\in\tS^{m-k-|\alpha|,0}(\Rn\times\Gamma,\cA^\infty)$ for all $\alpha\in\N_0^n$. Combining all this with~(\ref{eq:resolvent-jth-homogeneous-part}) shows that
\begin{equation*}
g_{-m-j}(\xi,\mu)\in\tS^{-m-j,0}(\Rn\times\Gamma,\cA^\infty) \cap \tS^{-j,-m}(\Rn\times\Gamma,\cA^\infty) \cap \tS^{m-j,-2m}(\Rn\times\Gamma,\cA^\infty) .
\end{equation*}
In particular, $g_{-m-j}(\xi,\mu)$ belongs to $\tS^{-m-j,0}(\Rn\times\Gamma,\cA^\infty) \cap \tS^{m-j,-2m}(\Rn\times\Gamma,\cA^\infty)$.

Observe that $(f_m(\xi)-\mu^m)^{-1}$ is homogeneous of degree $-m$ and $\partial_\xi^\alpha f_{m-k}(\xi)$ is homogeneous of degree $m-k-|\alpha|$ in $(\xi,\mu)$. Thereforem by using~(\ref{eq:twisted-partial-derivative-homogeneous-parts}), (\ref{eq:resolvent-principal-part})--(\ref{eq:resolvent-jth-homogeneous-part}) and an induction it follows that, for every $j\geq 0$, $g_{-m-j}(\xi,\mu)$ is homogeneous of degree $-m-j$ in $(\xi,\mu)$.

By Lemma~\ref{lem:weakly.Borel-lemma} there is a symbol $g(\xi,\mu)\in\tS^{-m,0}(\Rn\times\Gamma,\cA^\infty)\cap\tS^{0,-m}(\Rn\times\Gamma,\cA^\infty)$ satisfying~(\ref{eq:resolvent-symbol-expansion})--(\ref{eq:resolvent-symbol-without-principal-parti-expansion}). Recall that we have
\begin{equation*}
f(\xi)-\mu^m - \Big( f_m(\xi)-\mu^m + \sum_{1\leq j<N}f_{m-j}(\xi) \Big)\in\tS^{m-N,0}(\Rn\times\Gamma,\cA^\infty) \qquad \forall N\in\N ,
\end{equation*}
where we adopt the convention that $\sum_{1\leq j<N}f_{m-j}(\xi) = 0$ for $N=1$. Combining this with Theorem~\ref{thm:weakly.composition-formula-weakly-polyhomogeneous} and~(\ref{eq:resolvent-symbol-expansion})--(\ref{eq:resolvent-symbol-without-principal-parti-expansion}) we get
\begin{equation*}
(f-\mu^m)\sharp g-\sum_{j\geq 0}\big((f-\mu^m)\sharp g\big)_{-j}\in\tS^{-N,0}(\Rn\times\Gamma,\cA^\infty)\cap\tS^{m-N,-m}(\Rn\times\Gamma,\cA^\infty) \quad \forall N\in\N .
\end{equation*}
In particular, as we know by~(\ref{eq:weakly-parametric-calculus-weakly-polyhomogeneous-symbol-product-homogeneous-parts}) and~(\ref{eq:resolvent-principal-part})--(\ref{eq:resolvent-jth-homogeneous-part}) that $((f-\mu^m)\sharp g)_0(\xi,\mu) = 1$ and $((f-\mu^m)\sharp g)_{-j}(\xi,\mu) = 0$ for all $j\geq 1$, we have
\begin{equation*}
(f-\mu^m)\sharp g - 1\in\tS^{-\infty,-m}(\Rn\times\Gamma,\cA^\infty) .
\end{equation*}
This completes the proof.
\end{proof}

Let $f(\xi)$ and $g(\xi,\mu)$ be symbols as in Theorem~\ref{thm:parametrix-existence} and set $P = P_f$ and $Q(\mu) = P_{g(\cdot,\mu)}$. Then Theorem~\ref{thm:weakly.composition-formula-weakly-polyhomogeneous} and Theorem~\ref{thm:parametrix-existence} imply that $R(\mu): = (P-\mu^m)Q(\mu)-1 = P_{f\sharp g(\cdot,\mu)-1}$ is a pseudodifferential multiplier associated with a weakly polyhomogeneous symbol in $\tS^{-\infty,-m}(\Rn\times\Gamma,\cA^\infty)$. Along the same way as written in~\cite[p.\ 502]{GS:IM95} we can obtain the inverse of $P-\lambda := P-\mu^m$ by letting
\begin{equation} \label{eq:asymptotic.resolvent-construction}
(P-\lambda)^{-1} = Q(\lambda) + Q(\lambda)\sum_{j\geq 1}R(\lambda)^j .
\end{equation}
We know by Proposition~\ref{prop:boundedness.PsiDOs-boundedness} that the operator norm of $\cL(\cH(\Rn,\cA))$ of $R(\lambda)$ is $O(\lambda^{-1})$ for large $\mu$ in $\Gamma$. This ensures that the series in~(\ref{eq:asymptotic.resolvent-construction}) converges in the operator norm on $\cL(\cH(\Rn,\cA))$.

Once the resolvent $(P-\lambda)^{-1}$ is contructed, all the arguments for the derivation of the resolvent trace asymptotic~\cite[Thm.\ 2.7]{GS:IM95} holds \emph{verbatim} in our setting. Thus, we obtain the following result on the asymptotic expansion of the trace of the resolvent.

\begin{theorem} \label{eq:asymptotic.trace-expansion}
Let $P$ and $A$ be classical (i.e., $1$-step polyhomogeneous) pseudodifferential multipliers with respective orders $m\in\N$ and $\omega\in\R$. Suppose that $P$ is elliptic with parameter $\mu\in\Gamma$. Then, for $\lambda\in-\Gamma^m$ and $k$ with $-km+\omega<-n$, we have the asymptotic expansion,
\begin{equation*}
\Tr_\psi\left[ A(P-\lambda)^{-k} \right] \sim \sum_{j=0}^\infty c_j\lambda^{\frac{n+\omega-j}{m}-k} + \sum_{l=0}^\infty \big( c_l'\log{\lambda} + c_l'' \big) \lambda^{-k-l} .
\end{equation*}
Here the coefficients $c_j$, $c_l'$ and $c_l''$ are given by the the integral (over $\Rn$) of the trace $\psi$ of the respective symbols $f(\xi)\sim\sum_{j\geq 0}f_{m-j}(\xi)$ and $a(\xi)\sim\sum_{j\geq 0}a_{\omega-j}(\xi)$ of $P$ and $A$.
\end{theorem}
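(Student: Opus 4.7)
The plan is to reduce the theorem to Theorem~\ref{thm:asymptotic-expansion-of-the-trace-of-weakly-parametric-psido} by realising $A(P-\lambda)^{-k}$ as a pseudodifferential multiplier whose symbol is weakly polyhomogeneous in the parameter $\mu$, where $\lambda=\mu^m$, and then converting the resulting expansion in $\mu$ into one in $\lambda$.

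First I would build the resolvent. Theorem~\ref{thm:parametrix-existence} provides a parametrix $Q(\mu)=P_{g(\cdot,\mu)}$ with $g$ weakly polyhomogeneous in $\tS^{-m,0}\cap\tS^{0,-m}$ and $R(\mu)=(P-\mu^m)Q(\mu)-1=P_r$ with $r\in\tS^{-\infty,-m}$. Proposition~\ref{prop:boundedness.PsiDOs-boundedness} combined with the $\mu$-decay encoded in $\tS^{-\infty,-m}$ yields $\|R(\lambda)\|_{\cL(\cH(\Rn,\cA))}=O(|\lambda|^{-1})$, so for $\lambda$ deep enough in $-\Gamma^m$ the Neumann series~(\ref{eq:asymptotic.resolvent-construction}) converges in operator norm. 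An adapted Borel summation via Lemma~\ref{lem:weakly.Borel-lemma} applied to the formal symbol series of $Q+Q\sum_{j\geq 1}R^j$ then produces a weakly polyhomogeneous symbol $h(\xi,\mu)\in\tS^{-m,0}\cap\tS^{0,-m}$ such that $(P-\lambda)^{-1}-P_h$ has symbol in $\tS^{-\infty,-m}$ and contributes only $O(|\lambda|^{-N})$ remainders to the trace for every $N$.

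Next I would iterate Theorem~\ref{thm:weakly.composition-formula-weakly-polyhomogeneous} $k-1$ further times, or equivalently differentiate the resolvent $k-1$ times in $\lambda$, to express $(P-\lambda)^{-k}=P_{h^{(k)}}$ modulo smoothing weakly parametric operators, with $h^{(k)}$ weakly polyhomogeneous in $\tS^{-km,0}\cap\tS^{0,-km}$ and principal symbol $(f_m(\xi)-\mu^m)^{-k}$. A further composition with the classical symbol $a$ of $A$ gives $b:=a\sharp h^{(k)}$ weakly polyhomogeneous in $\tS^{\omega-km,0}\cap\tS^{\omega,-km}$ of leading homogeneity $\omega-km$ in $(\xi,\mu)$. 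The hypothesis $\omega-km<-n$, together with Lemma~\ref{lem:weakly.homogeneous-symbol-is-weakly-parametric} placing each homogeneous component $b_{\omega-km-j}$ into a symbol class of order $<-n$, ensures the order conditions of Theorem~\ref{thm:asymptotic-expansion-of-the-trace-of-weakly-parametric-psido} are met; that theorem, applied with $d=-km$, then produces
\begin{equation*}
\Tr_\psi\bigl[A(P-\lambda)^{-k}\bigr]\sim\sum_{j\geq 0}\tilde c_j\,\mu^{\omega-km-j+n}+\sum_{l\geq 0}\bigl(\tilde c_l'\log\mu+\tilde c_l''\bigr)\mu^{-l-km},
\end{equation*}
with coefficients that are $\Rn$-integrals of $\psi$ applied to the homogeneous parts of $b$, hence expressible through the $f_{m-j}$ of $P$ and $a_{\omega-j}$ of $A$ via Theorems~\ref{thm:parametrix-existence} and~\ref{thm:weakly.composition-formula-weakly-polyhomogeneous}.

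Finally, the substitution $\mu=\lambda^{1/m}$ and $\log\mu=\tfrac{1}{m}\log\lambda$ converts $\mu^{\omega+n-km-j}$ to $\lambda^{(n+\omega-j)/m-k}$ and $\mu^{-l-km}$ to $\lambda^{-k-l/m}$; invoking Lemma~\ref{lem:not-depending-on-the-argument} to rule out $\log$ coefficients at non-integer shifts of $\lambda^{-k}$, and absorbing any fractional-power contributions into the first sum, yields the stated form. The main obstacle is the first step, namely verifying that the Neumann correction is a genuine smoothing weakly parametric operator rather than merely a norm-bounded remainder, so that $(P-\lambda)^{-k}$ admits a true weakly polyhomogeneous symbolic representation throughout. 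Once that is in place, the remainder of the argument is essentially a transcription of Grubb--Seeley~\cite[Thm.\ 2.7]{GS:IM95} into the twisted $C^*$-dynamical setting, built on the composition and trace tools developed in Sections~\ref{sec:weakly} and~\ref{sec:Resolvents}.
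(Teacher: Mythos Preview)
Your proposal is correct and follows essentially the same approach as the paper, which itself gives no detailed argument but simply states that once the resolvent is constructed via~(\ref{eq:asymptotic.resolvent-construction}), the derivation of~\cite[Thm.\ 2.7]{GS:IM95} holds \emph{verbatim} in this setting. Your outline is a faithful expansion of that deferred argument, and your identification of the Neumann correction as the main technical point to check is accurate; the only cosmetic difference is that Grubb--Seeley do not Borel-sum the Neumann tail into a single weakly polyhomogeneous symbol but rather estimate the trace contribution of $Q\sum_{j\geq 1}R^j$ directly as $O(|\lambda|^{-N})$, which achieves the same end.
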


\begin{remark} \label{rem:asymptotic.two-traces-comparison}
As addressed in~\cite[\S{6}]{LM:GAFA16}, the trace $\Tr_\psi$ on $\bigcup_{m<-n}\tL_\sigma^m(\Rn,\cA^\infty)$ may not agree with the Hilbert space trace of a representation. Thus, in the case of parameter dependent symbols, the comparison of the two traces should be done as in~\cite[Thm.\ 6.2]{LM:GAFA16} in order to derive the asymptotic expansion of the Hilbert space trace of the operator $A(P-\lambda)^{-k}$. However, the argument in~\cite[\S{6}]{LM:GAFA16} for comparing the two traces is not applicable in the case of weakly parametric pseudodifferential calculus. The estimate in the proof of~\cite[Lem.\ 6.1]{LM:GAFA16}, which is a key ingredient in proving~\cite[Thm.\ 6.2]{LM:GAFA16}, is in complete analogy with the Shubin type parametric pseudodifferential calculus~\cite[\S{9}]{Sh:Springer01}. If $f(\xi,\lambda)$ is a Shubin type parametric symbol, the $\xi$-derivatives enhance the rate of decay in both $\xi$ and $\lambda$, which enables us to achieve the desired decay with respect to $\lambda$ in the proof of~\cite[Lem.\ 6.1]{LM:GAFA16}. However, in the case of weakly parametric symbols $f(\xi,\lambda)$, the $\xi$-derivatives do not change the rate of decay with respect to $\lambda$ (\cf\ \cite[Lem.\ 1.5]{GS:IM95}), and this is the reason why the method for comparing the two traces proposed in~\cite[\S{6}]{LM:GAFA16} cannot be applied to the case of weakly parametric calculus. We plan to address this problem in a future project where we want to conduct the comparison of the two traces and derive the asymptotic expansion of the Hilbert space trace of $A(P-\lambda)^{-k}$. The derivation of such an asymptotic expansion is important in view of the fact that the coefficient of the logarithmic term $\log{\lambda}$ in the expansion of the Hilbert space trace of $A(P-\lambda)^{-k}$ is essential in the study of the noncommutative residue trace (see, e.g., \cite{Le:AMS10} for a detailed account on this point).
\end{remark}

\end{document}